\definecolor{MyDarkBlue}{cmyk}{0.8,0.3,0.8,0.4}
\definecolor{yellow}{rgb}{0.99,0.99,0.70}
\definecolor{white}{rgb}{1.0,1.0,1.0}
\definecolor{black}{rgb}{0.00,0.00,0.00}
\numberwithin{equation}{section}
\newcommand{\be}{\begin{eqnarray}}
\newcommand{\ee}{\end{eqnarray}}
\newcommand{\ce}{\begin{eqnarray*}}
\newcommand{\de}{\end{eqnarray*}}
\newtheorem{theorem}{Theorem}[section]
\newtheorem{lemma}{Lemma}[section]
\newtheorem{remark}{Remark}[section]
\newtheorem{definition}{Definition}[section]
\newtheorem{proposition}{Proposition}[section]
\newtheorem{corollary}{Corollary}[section]
\newtheorem{Rem}{Remark}[section]
\def\[{{\Big[}}
\def\]{{\Big]}}
\def\<{{\langle}}
\def\>{{\rangle}}
\def\({{\big(}}
\def\){{\big)}}
\def\bb2{{\boldsymbol{2}}}
\def\={&\!\!=\!\!&}
\def\RR{\mathbb{R}}
\def\b1{{\mathbbm 1}}
\def\geq{\geqslant}
\def\leq{\leqslant}
\def\ge{\geqslant}
\def\le{\leqslant}
\def\[{{\Big[}}
\def\]{{\Big]}}
\def\<{{\langle}}
\def\>{{\rangle}}
\def\={&\!\!=\!\!&}
\def\bt{\begin{theorem}}
\def\et{\end{theorem}}
\def\bl{\begin{lemma}}
\def\el{\end{lemma}}
\def\br{\begin{remark}}
\def\er{\end{remark}}
\def\bd{\begin{definition}}
\def\ed{\end{definition}}
\def\bc{\begin{corollary}}
\def\ec{\end{corollary}}
\def\geq{\geqslant}
\def\leq{\leqslant}
\def\ge{\geqslant}
\def\le{\leqslant}
\def\<{\langle} \def\>{\rangle}
\def\X{{\widetilde X}}
\begin{document}

\title[LDP for  Slow-Fast Mean-Field Diffusions]
{Large Deviations for Slow-Fast Mean-Field Diffusions$^\dagger$}

\thanks{$\dagger$
This work is supported by National Key R\&D program of China (No. 2023YFA1010101). The research of W. Hong is also supported by  NSFC (No.~12401177) and Basic Research Program of Jiangsu (No.~BK20241048).  The research of W. Liu is also supported by NSFC (No.~12571155). 
}

\maketitle
\centerline{\scshape Wei Hong$^a$,   Wei Liu$^{a,}$\footnote{~Corresponding author: weiliu@jsnu.edu.cn},  Shiyuan Yang$^{a}$ }
\medskip
 {\footnotesize
\centerline{ $a.$   School of Mathematics and Statistics, Jiangsu Normal University, Xuzhou, 221116, China}

\begin{abstract}
The aim of this paper is to investigate the large deviations for a class of slow-fast mean-field diffusions, where the slow process depends on not only the fast process but also their distributions.  Due to the perturbations of fast
process and its time marginal law, one cannot prove the large deviations based on verifying the powerful weak convergence criterion directly. To overcome this problem, we employ the functional occupation measure, which combined with the notion of the viable pair and the controls of feedback form  to characterize the limits of  controlled sequences and justify the upper and lower bounds of Laplace principle. As a consequence, the explicit representation formula of the rate function for large deviations is also presented.

\bigskip
\noindent
\textbf{Keywords}: Mean-field equations, large deviation principle, slow-fast systems, occupation measures, viable pairs
\\
\textbf{Mathematics Subject Classification (2020)}: 60H10, 60F10

\end{abstract}
\maketitle \rm

\tableofcontents

\section{Introduction}
In this work,  we focus on the asymptotical behavior of the following mean-field equation with fast oscillations
\begin{equation}\label{n1}
dX_t^{\delta}=b(X_t^{\delta},\mathscr{L}_{X_t^{\delta}},Y_t^{\delta},\mathscr{L}_{Y_t^{\delta}})dt+\sqrt{\delta}\sigma(X_t^{\delta},\mathscr{L}_{X_t^{\delta}},Y_t^{\delta},\mathscr{L}_{Y_t^{\delta}})dW_t^1,~X_0^{\delta}=x,
\end{equation}
where the fast oscillating process $Y_t^{\delta}$ satisfies the equation
\begin{equation}\label{n0}
dY_t^{\delta}=\frac{1}{\varepsilon}f(Y_t^{\delta})dt+\frac{1}{\sqrt{\varepsilon}}g(Y_t^{\delta})dW_t^2,~Y_0^{\delta}=y.
\end{equation}
Here, $\mathscr{L}_{X_t^{\delta}}$ (resp.~$\mathscr{L}_{Y_t^{\delta}}$) denotes the law of $X_t^{\delta}$ (resp.~$Y_t^{\delta}$),  the scale $\delta$ describes the intensity of the noise and $\varepsilon:=\varepsilon(\delta) $ describes the ratio of the time scale between the (slow) component $X^{\delta}$ and the (fast) component $Y^{\delta}$. We define an $\mathbb{R}^{d_1+d_2}$-valued standard Brownian motion $W$ on a complete filtration probability space $\left(\Omega,\mathcal{F},(\mathcal{F}_{t})_{t\in[0,T]},\mathbb{P}\right)$. Then, we can choose the projection operators $P_1:\mathbb{R}^{d_1+d_2}\to \mathbb{R}^{d_1}, P_2:\mathbb{R}^{d_1+d_2}\to \mathbb{R}^{d_2}$ such that
\begin{equation}\nonumber
W_t^1:=P_1W_t,~ W_t^2:=P_2W_t
\end{equation}
are independent $d_1$ and $d_2$ dimensional standard Brownian motions, respectively.

Such systems (\ref{n1})-(\ref{n0}) are called the multi-scale (or slow-fast) dynamical systems, which have extensive applications in various fields such as  climate dynamics,  molecular dynamics and mathematical finance (see e.g.~\cite{a,l,q,mm}). Moreover, studying these systems with small random noise perturbations not only has potential applications, but also demonstrates theoretical challenges due to the interactions between different scales. For instance, exploring rare transition events among equilibrium states in multi-scale dynamical systems (cf.~\cite{ooo, ggg, zzz}) and examining the small-time asymptotics of multi-scale financial models \cite{s, t} are of particular interest in mathematics and finance.

Our goal of the present work is to study the large deviation principle (LDP for short), as $\delta\to 0$, for the mean-field diffusions (\ref{n1}).
The LDP is a classical topic in probability theory, primarily describing the asymptotic behavior of the remote tails of a family of probability distributions. It has extensive applications in various fields, including information theory, thermodynamics, statistics and engineering. When dealing with stochastic processes, a central concept in studying LDP is to identify a deterministic path around which the diffusion process is highly concentrated. This kind of asymptotic behaviours is called the small perturbation type LDP (also known as Freidlin-Wentzell's LDP), which was first introduced by Freidlin and Wentzell  for stochastic differential equations (SDEs for short) in their pioneering work \cite{u}. This framework has been extensively studied over the past several decades. For more detailed expositions on the background and applications for the theory of large deviations, we refer to the classical monographs  \cite{ppp, ttt, ww}.

On the other hand, the mean-field stochastic equations  have attracted widespread attention due to their effective applications to describe stochastic systems whose evolution is influenced by both the microscopic locations and the macroscopic laws of particles.
  There have numerous results in literature on mean-field stochastic equations in recent years, for example, one could refer to \cite{ee, rr, xx} for the well-posedness results  and  \cite{p, kk, qq}  for several asymptotic properties.
 The investigation of mean-field stochastic equations and interacting particle systems can be traced back to Kac's program in kinetic theory \cite{hh} and McKean's seminal work \cite{oo} on nonlinear parabolic equations. For example, we analyze the dynamics of the $N$-particle system governed by the following SDEs
\begin{equation*}\label{zz1.1}
dX_t^{N,i}=b(X_t^{N,i},\mu_t^N)dt+\sigma(X_t^{N,i},\mu_t^N)dW_t^i,~~\mu_t^N=N^{-1}\sum_{j=1}^N\delta_{X_t^{N,j}},
\end{equation*}
where $i = 1, \cdots, N,$ the mean field interactions are expressed through the dependence of coefficients on the empirical laws $\mu_t^N$ of the system. Under some appropriate assumptions on coefficients and the exchangeability assumption on initial conditions, as $N \to \infty$, the empirical laws $\mu_t^N$  weakly converge to the law of the solution to the following mean-field stochastic equation
\begin{equation*}
dX_t=b(X_t,\mathscr{L}_{X_t})dt+\sigma(X_t,\mathscr{L}_{X_t})dW_t.
\end{equation*}
Thus, the coefficients of the limiting equation will inherently depend not only on the solution pointwisely but also on its time marginal law. This kind of limiting behavior is commonly referred as the propagation of chaos in the study of stochastic dynamics of particle systems.

Due to the above reasons,  many scholars are interested in the multi-scale interacting particle systems. For example, one often considers the following form of system
\begin{equation}\label{zz1.2}
	\left.\left\{\begin{aligned}
		&dX_t^{\varepsilon,N,i}=b(X_t^{\varepsilon,N,i},\mu_t^{\varepsilon,N},X_t^{\varepsilon,N,i}/\varepsilon,\nu_t^{\varepsilon,N})dt +\sigma(X_t^{\varepsilon,N,i},\mu_t^{\varepsilon,N},X_t^{\varepsilon,N,i}/\varepsilon,\nu_t^{\varepsilon,N})dW_t^i,\\
		&\mu_t^{\varepsilon,N}=N^{-1}\sum_{j=1}^N\delta_{X_t^{\varepsilon,N,j}},~\nu_t^{\varepsilon,N}=N^{-1}\sum_{j=1}^N\delta_{X_t^{\varepsilon,N,j}/\varepsilon},
	\end{aligned}\right.\right.
\end{equation}
where $i = 1,\cdots, N$, $\varepsilon$ is a small parameter indicating the ratio of time scales. The variables $X_t^{\varepsilon,N,i}$ and $Y_t^{\varepsilon,N,i} := X_t^{\varepsilon,N,i}/\varepsilon$ represent the slow and fast components in system (\ref{zz1.2}), respectively. Investigating the combined mean field and homogenization limits (i.e. $N \to \infty$ and $\varepsilon \to 0$) for the multi-scale interacting particle system (\ref{zz1.2}) attracts lots of attentions recently.   Gomes and Pavliotis \cite{w} studied the system (\ref{zz1.2}) with coefficients $b(x,\mu,y,\nu) = b(x,\mu,y),$ $\sigma = c$ for which $c$ is a constant. They explored that while the mean field limit and homogenization limit commute over finite time, they do not commute over the long time. Moreover, Delgadino et al.\cite{n} studied the system (\ref{zz1.2}) with $b(x,\mu,y,\nu) = b(y,\nu)$ and $\sigma = c$, and showed that the mean field and homogenization limits do not commute if the mean-field system constrained to the torus undergoes a phase transition, i.e.~multiple steady states exist. Recently, Bezemek and
Spiliopoulos \cite{j,jjj}  established the LDP and moderate deviations of the empirical laws to system (\ref{zz1.2}) with $b(x,\mu,y,\nu) = b(x,\mu,y)$ and $\sigma(x,\mu,y,\nu) = \sigma(x,\mu,y),$ as $N \to \infty$ and $\varepsilon \to 0$ simultaneously.

In this paper, we  aim to study the Freidlin-Wentzell type LDP for the mean-field stochastic equation (\ref{n1}). A related work is the reference \cite{cc} where the authors established the LDP for the following type of multi-scale mean-field stochastic systems
\begin{equation*}\left\{\begin{array}{l}\label{E2}
\displaystyle
d X^{\delta}_t = b(X^{\delta}_t, \mathcal{L}_{X^{\delta}_t}, Y^{\delta}_t)dt+\sqrt{\delta}\sigma(X^{\delta}_t, \mathcal{L}_{X^{\delta}_t})d W^{1}_t, \\
\displaystyle d Y^{\delta}_t =\frac{1}{\varepsilon}f(X^{\delta}_t, \mathcal{L}_{X^{\delta}_t}, Y^{\delta}_t)dt+\frac{1}{\sqrt{\varepsilon}}g( X^{\delta}_t, \mathcal{L}_{X^{\delta}_t}, Y^{\delta}_t)d W^{2}_t,
\end{array}\right.
\end{equation*}
by employing the powerful weak convergence criterion. The weak convergence approach  has been systematically developed by Budhiraja, Dupuis, and Ellis in \cite{d, e, m}. The core  of this method relies on the variational representation formulas for the Laplace transform of bounded continuous functionals and  the equivalence between the LDP and the Laplace principle.
More specifically, the LDP is derived by demonstrating the weak convergence of solutions to the controlled (slow) process  towards its deterministic averaged limit as proved in \cite{cc}. However, due to the dependence of fast process $Y_t^{\delta}$ and its time marginal law $\mathscr{L}_{Y_t^{\delta}}$ in (\ref{n1}),  characterizing the limit  of the controlled slow process $X_t^{\delta,h^{\delta}}$ (see (\ref{2.7}) below)  is more challenging in the weak convergence method. In particular, different from  \cite{cc}, one cannot directly justify the weak convergence criterion in this case.

To deal with this problem, we construct the functional occupation measure corresponding to the controlled fast process $Y_t^{\delta,h^{\delta}}$ (cf.~(\ref{2.7})) and the control variable $h^{\delta}_t $ (cf.~(\ref{2.12}) below). Moreover,
we present the notion of viable pair within the mean-field framework. The definition of viable pair for the classical SDEs was initially introduced in \cite{m} and substantially developed in recent works \cite{x, dd}, which refers to a combination of a trajectory and a measure that captures both the limit averaging dynamics of the controlled slow processes and the invariant measure of the controlled fast process. We highlight that this is an effective method to address the problem because the behavior of the fast component will not converge pathwisely to any specific outcome, but its occupation measure will converge to a limiting measure.

As far as we know, Dupuis and Spiliopoulos \cite{o} studied the LDP for locally periodic SDEs with small noise and fast oscillating coefficients  and achieved significant results using the occupation measure approach. Subsequently, Spiliopoulos \cite{tttt} established the LDP and provide a rigorous mathematical framework for asymptotically efficient importance sampling schemes for stochastic systems with slow and fast dynamics.

However, in comparison to the existing works e.g. \cite{cc,o,tttt}, we consider the mean-field dynamics (\ref{n1}), where the coefficients of (\ref{n1}) not only depend on $X_t^{\delta}$ and its  law $\mathscr{L}_{X_t^{\delta}}$, but also on $Y_t^{\delta}$ and  $\mathscr{L}_{Y_t^{\delta}}$. We will demonstrate that the controlled slow processes and occupation measures $\{(X_t^{\delta,h^{\delta}},\mathbf{P}^{\delta,\Delta})\}_{\delta >0}$, which is defined in (\ref{2.12}), are tight and then have a weakly convergent subsequence. Subsequently, we  establish the  upper and lower bounds
of Laplace principle through the variational representation formulas for functionals of  Brownian motion, then the  LDP is obtained. It is worth to note that the proof of  upper bound of Laplace principle is more complicated than the lower bound, and in this case we construct the control by feedback form and establish a nearly optimal control to achieve the desired  bounds.

The rest of the paper is outlined as follows. In Section 2, we present some notations and definitions of LDP and introduce the main assumptions on coefficients. In Section 3, we outline the weak convergence approach and present the main result. Section 4 is dedicated to deriving some preliminary estimates for the controlled equations. Section 5 is dedicated to proving the LDP. Section 6 is the appendix.

Throughout the work, we use $C$ to denote a generic positive constant, whose specific value may vary in different lines. When necessary, we will specify dependence of the constant on parameters, such as $C_T$.

\section{Main assumptions}
\setcounter{equation}{0}
\setcounter{definition}{0}
We first collect some notations that will be frequently used in this work. We denote the Euclidean vector norm and inner product by $|\cdot|$ and $\langle\cdot,\cdot\rangle$, respectively. Let $\|\cdot\|$ be the Hilbert-Schmidt norm on a matrix, i.e., $\|g\|^2:= Tr(gg^*)$. The tensor product $\mathbb{R}^{n} \otimes \mathbb{R}^{m}$ represents the space of all $n \times m$-dimensional matrix for $n , m\in \mathbb{N}_+$.

Let $\mathcal{P}(\mathbb{R}^{n})$ stand for the space of all probability measures on $(\mathbb{R}^{n},\mathcal{B}(\mathbb{R}^{n}))$. For any $p\geq 1$, we set
$$\mathcal{P}_p(\mathbb{R}^{n}):=\Big\{\mu\in\mathcal{P}(\mathbb{R}^{n}):\mu(|\cdot|^p):=\int_{\mathbb{R}^{n}}|\xi|^{p}\mu(d\xi)<\infty\Big\}.$$
It is known that $\mathcal{P}_p(\mathbb{R}^{n})$ is a Polish space under the $L^p$-Wasserstein distance
$$\mathbb{W}_{p}(\mu,\nu):=\inf_{\pi\in\mathscr{C}(\mu,\nu)}\Big(\int_{\mathbb{R}^{n}\times \mathbb{R}^{n}}|\xi-\eta|^p\pi(d\xi,d\eta)\Big)^{\frac{1}{p}},~\mu,\nu\in\mathcal{P}_p(\mathbb{R}^{n}),$$
where $\mathscr{C}(\mu,\nu)$ stands for the set of all couplings for the measures $\mu$ and $\nu$, i.e., $\pi\in\mathscr{C}(\mu,\nu)$ is a probability measure on $\mathbb{R}^{n}\times \mathbb{R}^{n}$ such that $\pi(\cdot\times \mathbb{R}^{n})=\mu$ and $\pi(\mathbb{R}^{n}\times \cdot)=\nu$.

Let's define the following sets  frequently used in the theory of LDP,
\begin{equation}\nonumber
\mathcal{A}:=\left\{h: h~ \text{is}~\mathbb{R}^{d_1+d_2}\text{-valued}~\mathcal{F}_t\text{-predictable process and}\int_0^T|h_s|^2ds<\infty,\mathbb{P}\text{-a.s.}\right\},
\end{equation}
\begin{equation}\nonumber
S_M:=\left\{h\in L^2([0,T];\mathbb{R}^{d_1+d_2}):\int_0^T|h_s|^2ds\leq M\right\},
\end{equation}
and
\begin{equation}\nonumber
\mathcal{A}_{M}:=\Big\{h\in\mathcal{A}:h(\omega)\in S_{M},\mathbb{P}\text{-a.s.}\Big\}.
\end{equation}

Now we recall the definition of LDP and Laplace principle.  Consider the family of random variables $\{X^\delta\}_{\delta>0}$ defined on a probability space $(\Omega,\mathcal{F},\mathbb{P})$ and taking values in a Polish space $\mathcal{E}$. The rate function of LDP is defined as follows.

\begin{definition}\label{2.1}(Rate function)
A function $I:\mathcal{E} \to [0,+\infty]$ is called a rate function if $I$ is
lower semicontinous. Moreover, a rate function $I$ is called a good rate function if for each constant $K$, the level set $\{x\in \mathcal{E}: I(x) \leq K\}$ is a compact subset of $\mathcal{E}$.
\end{definition}

\begin{definition}\label{2.2}(LDP)
The random variable family $\{X^\delta\}_{\delta>0}$ is said to
satisfy the LDP on $\mathcal{E}$ with rate function $I$ if the following two conditions hold:

(i) (LDP lower bound) For any open set $G \subset \mathcal{E}$,

\begin{equation}\nonumber
	\liminf_{\delta\to0}\delta\log\mathbb{P}(X^{\delta}\in G)\geq-\inf_{x\in G}I(x),
\end{equation}

(ii) (LDP upper bound) For any closed set $ F \subset \mathcal{E}$,
\begin{equation}\nonumber
	\limsup_{\delta\to0}\delta\log\mathbb{P}(X^\delta\in F)\leq-\inf_{x\in F}I(x).
\end{equation}
\end{definition}

\begin{definition}\label{2.3}(Laplace principle)
The sequence  $\{X^\delta\}_{\delta>0}$ is said to be satisfied the Laplace
principle upper bound (respectively, lower bound) on $\mathcal{E}$ with a rate function $I$ if for each bounded continuous real-valued function $\Lambda$ : $\mathcal{E} \to \mathbb{R}$
\begin{equation}\nonumber
\limsup_{\delta\to0}-\delta\log\mathbb{E}\Big\{\exp[-\frac1\delta\Lambda(X^\delta)]\Big\}\leq\inf_{x\in\mathcal{E}}\big(\Lambda(x)+I(x)\big),
\end{equation}
$\Big(respectively,$
\begin{align}\nonumber
\liminf_{\delta\to0}-\delta\log\mathbb{E}\Big\{\exp[-\frac{1}{\delta}\Lambda(X^\delta)]\Big\}\geq\inf_{x\in\mathcal{E}}\big(\Lambda(x)+I(x)\big)\Big).
\end{align}
\end{definition}

It is well-known that if $\mathcal{E}$ is a Polish space and $I$ is a good rate function, then the LDP and Laplace principle are equivalent (see e.g.  \cite{m,ppp,ww}).

In this work, we assume that the maps
$$
\begin{aligned}
	&b:\mathbb{R}^{n}\times\mathcal{P}_{2}(\mathbb{R}^{n})\times\mathbb{R}^{m}\times\mathcal{P}_{2}(\mathbb{R}^{m})\rightarrow\mathbb{R}^{n}; \\
	&\sigma:\mathbb{R}^{n}\times\mathcal{P}_{2}(\mathbb{R}^{n})\times\mathbb{R}^{m}\times\mathcal{P}_{2}(\mathbb{R}^{m})\rightarrow\mathbb{R}^{n}\otimes\mathbb{R}^{d_{1}}; \\
	&f:\mathbb{R}^{m}\rightarrow\mathbb{R}^{m}; \\
	&g:\mathbb{R}^m\to\mathbb{R}^m\otimes\mathbb{R}^{d_2}
\end{aligned}
$$
satisfy the following conditions:

\begin{enumerate}
	\item [$(\mathbf{A_1})$] Suppose that there exist constants $C$, $\kappa$$>0$ such that for all $ x_{1},x_{2}\in\mathbb{R}^{n},\mu_{1},\mu_{2}\in\mathcal{P}_{2}(\mathbb{R}^{n}),\nu_1,\nu_2\in\mathcal{P}_{2}(\mathbb{R}^{m}),y_{1},y_{2}\in\mathbb{R}^{m},$
	\begin{align}\label{a1}
|b(x_1,\mu_1,y_1,\nu_1)&-b(x_2,\mu_2,y_2,\nu_2)|+\|\sigma(x_1,\mu_1,y_1,\nu_1)-\sigma(x_2,\mu_2,y_2,\nu_2)\|\nonumber\\
&\le C\big[|x_1-x_2|+|y_1-y_2|+\mathbb{W}_2(\mu_1,\mu_2)+\mathbb{W}_2(\nu_1,\nu_2)\big],
	\end{align}
	\begin{align}\label{a11}
&|f(y_1)-f(y_2)|+\|g(y_1)-g(y_2)\| \le C|y_1-y_2|.
	\end{align}
	
	Moreover,
	\begin{align}\label{a111}
2\langle f(y_1)-f(y_2),y_1-y_2\rangle+\|g(y_1)-g(y_2)\|^2\le-\kappa|y_1-y_2|^2. 
	\end{align} 

\item [$(\mathbf{A_2})$] Suppose that $g$ is bounded and there exist $c_1,c_2>0$ such that for all $x\in \mathbb{R}^n$, $y\in \mathbb{R} ^m$, $\mu \in \mathcal{P} _2(\mathbb{R}^n)$, $\nu\in \mathcal{P} _2(\mathbb{R}^m)$ and $\xi\in \mathbb{R} ^n, $
\begin{equation}\label{rf1}
c_{1}|\xi|^{2}\leq\langle\sigma\sigma^{*}(x,\mu,y,\nu)\xi,\xi\rangle\leq c_{2}|\xi|^{2}.
\end{equation}
\end{enumerate}

We give some remarks regarding the aforementioned conditions.
\begin{Rem}
(i) Conditions $(\ref{a1})$ and $(\ref{a11})$ are utilized to ensure the existence and uniqueness of strong solutions to system $(\ref{n1})$-$(\ref{n0})$. Conditions $(\ref{a111})$ and the boundedness of $g$ imply that for any $\beta\in(0,\kappa)$, there exists $C_\beta>0$ such that for any $y\in\mathbb{R}^m$,
\begin{equation}\label{2.4}
2\langle f(y),y\rangle+\|g(y)\|^2\leq-\beta|y|^2+C_\beta.
\end{equation} 
Moreover, the dissipative condition $(\ref{a111})$ is also employed to ensure the existence and uniqueness of the invariant measure to Eq.~$(\ref{n2})$ below $($cf.~\cite[Theorem 4.3.9]{ll}$)$.

(ii) Note that in the study of the averaging principle, the assumptions $\langle f(y),y\rangle+\|g(y)\|^2\to-\infty$, as $|y|\to\infty$ (or a slightly stronger version like $\langle f(y),y\rangle+\|g(y)\|^2\leq-c|y|^\alpha$ for $c,\alpha>0$ and all sufficiently large $|y|)$,  together with the  non-degeneracy condition of $gg^*$   are sufficient, which   guarantee the existence and uniqueness of the invariant measure to Eq.~$(\ref{n2})$. However, in the study of the LDP, the stronger assumption $(\ref{2.4})$ is also required to obtain the uniform estimates of the controlled fast process $Y^{\delta,h^\delta}$ defined in the system $(\ref{3.3})$ below.
If we consider the above-mentioned more general case, the dissipation  would be insufficient to dominate  the growth of the controlled dynamics in the analysis of LDP.

(iii) Condition $(\ref{rf1})$ is primarily employed to provide an explicit expression for the rate function $I$ in  LDP.

\end{Rem}

\section{Main result on LDP}
\setcounter{equation}{0}
\setcounter{definition}{0}
In this section, we  first provide a  overview for the weak convergence approach that  is systematically developed in \cite{f,m}. At its core, this approach hinges on two pivotal aspects: the equivalence between the LDP and the Laplace principle, and the employment of variational representation for the exponential functional of Brownian motions (cf. \cite{c, d}). Consequently, we  focus on establishing the Laplace principle instead of directly proving the LDP. Then we present the main results of this work.
\subsection{Weak convergence approach}\label{subsec2.1}
Let $\mathcal{E}$ denote the space of all continuous functions on $\mathbb{R}^n$, i.e., $\mathcal{E}:=C([0,T];{\mathbb{R}^n})$. In this study, our objective is to establish that the family $\{X^\delta\}_{\delta>0}$ satisfies the Laplace principle with speed $\delta$. Specifically, this means that for every bounded and continuous function $\Lambda:\mathcal{E}\to\mathbb{R}$,
\begin{equation}\label{2.5}
	\lim\limits_{\delta\to0}-{\delta}\log\mathbb{E}\left[\exp\left\{-\frac{1}{\delta}\Lambda(X^\delta)\right\}\right]=\inf\limits_{x\in\mathcal{E}}\left[I(x)+\Lambda(x)\right].
\end{equation}
The derivation of the Laplace principle relies on a variational representation for functionals of Brownian motions, which permits us to reformulate the prelimit expression on the left hand side of (\ref{2.5}). Let $F(\cdot)$ be a bounded and measurable real-valued function defined on $C([0,T];{\mathbb{R}^d})$. In light of \cite{c} or \cite{d}, we obtain
$$
-\log\mathbb{E}\Big[\exp\Big\{-F(W)\Big\}\Big]=\inf_{h\in\mathcal{A}}\mathbb{E}\left[\int_{0}^{T}|h_{s}|^{2}ds+F\left(W+\int_{0}^{\cdot}h_{s}ds\right)\right],
$$
where $W$ is a standard $d$-dimensional Brownian motion.

In the current context, we consider $W = (W^1, W^2)$ and $d = d_1 + d_2$. Under assumption $({\mathbf{A_1}})$, where both the coefficients of the slow and fast components exhibit globally Lipschitz continuity, the strong well-posedness is well-known (cf.~e.g.~\cite{xx}). 
\begin{Rem}\label{remarkfro}
For any $\mu \in C([0,T];\mathcal{P}_2(\mathbb{R}^n))$ and $\rho\in C([0,T];\mathcal{P}_2(\mathbb{R}^m))$, which can be viewed as a deterministic measure flow, we consider the following reference SDEs
\begin{equation}\label{refSDE}
d\tilde{X}^{\delta}_t=b^{\mu_t,\rho_t}(\tilde{X}^{\delta}_t, \tilde{Y}^{\delta}_t)dt+\sqrt{\delta}\sigma^{\mu_t,\rho_t}(\tilde{X}^{\delta}_t,\tilde{Y}^{\delta}_t)d W_t^1,~ \tilde{X}^{\delta}_0=x,
\end{equation}
where   we denote $b^{\mu,\rho}(x,y):=b(x,\mu,y,\rho)$, $\sigma^{\mu,\rho}(x,y):=\sigma(x,\mu,y,\rho)$. Note that Eq.~$(\ref{refSDE})$ is a  classical SDE, one can apply the classical Yamada-Watanabe theorem to obtain that there exists a measurable map
$\mathcal{G}_{\mu,\rho}: C([0,T]; \RR^{d_1+d_2})\rightarrow C([0,T]; \RR^n)$ such that we have the representation
\begin{equation}\label{Yawa}
\tilde{X}^{\delta}=\mathcal{G}_{\mu,\rho}\big(W_{\cdot}\big).
\end{equation}
Now we fix $\mu=\mu^{\delta}:=\mathscr{L}_{X^{\delta}}$ and $\rho=\rho^{\delta}:=\mathscr{L}_{Y^{\delta}}$, then the original slow equation $(\ref{n1})$ reduces to
\begin{equation}
d X^{\delta}_t=b^{\mu_t,\rho_t}(X^{\delta}_t, Y^{\delta}_t)dt+\sqrt{\delta}\sigma^{\mu_t,\rho_t}(X^{\delta}_t, Y^{\delta}_t)d W_t^1,~ X^{\delta}_0=x.
\end{equation}
It is clear  that $X^{\delta}_t$ is also a solution of Eq.~$(\ref{refSDE})$ with $\mu=\mu^{\delta},\rho=\rho^{\delta}$, then
by the strong uniqueness of  Eq.~$(\ref{refSDE})$, we know
$$X^{\delta}_t=\tilde{X}^{\delta}_t,~t\in[0,T].$$
Therefore, it follows from the representation $(\ref{Yawa})$ that 
$$X^{\delta}=\mathcal{G}_{\mu,\rho}\big(W_{\cdot}\big)=\mathcal{G}_{\mu^{\delta},\rho^{\delta}}\big(W_{\cdot}\big).$$
\end{Rem}

From Remark \ref{remarkfro}, for simplicity of notation, we denote $\mathcal{G}^{\delta}:=\mathcal{G}_{\mu^{\delta},\rho^{\delta}}$.
For any $h^\delta\in\mathcal{A}_M$, let us define
\begin{equation}\label{2.6}
	X^{\delta,h^{\delta}}:=\mathcal{G}^{\delta}\Big(W.+\frac{1}{\sqrt{\delta}}\int_{0}^{\cdot}h_{s}^{\delta}ds\Big).
\end{equation}
It is the solution corresponding to the following stochastic control problem
\begin{equation}\label{2.7}
	\left.\left\{\begin{aligned}
		dX_t^{\delta,h^{\delta}}=&b(X_t^{\delta,h^{\delta}},\mathscr{L}_{X_t^{\delta}},Y_t^{\delta,h^{\delta}},\mathscr{L}_{Y_t^{\delta}})dt+\sigma(X_t^{\delta,h^{\delta}},\mathscr{L}_{X_t^{\delta}},Y_t^{\delta,h^{\delta}},\mathscr{L}_{Y_t^{\delta}})h_t^{1,\delta}dt\\&+\sqrt{\delta}\sigma(X_t^{\delta,h^{\delta}},\mathscr{L}_{X_t^{\delta}},Y_t^{\delta,h^{\delta}},\mathscr{L}_{Y_t^{\delta}})dW_t^1,
\\dY_t^{\delta,h^{\delta}}=&\frac{1}{\varepsilon}f(Y_t^{\delta,h^{\delta}})dt+\frac{1}{\sqrt{\delta\varepsilon}}g(Y_t^{\delta,h^{\delta}})h_t^{2,\delta}dt+\frac{1}{\sqrt{\varepsilon}}g(Y_t^{\delta,h^{\delta}})dW_t^2,
\\X_0^{\delta,h^{\delta}}=&x,Y_0^{\delta,h^{\delta}}=y,
	\end{aligned}\right.\right.
\end{equation}
where the controls $h_t^{1,\delta}:=P_1h_t^\delta$, $h_t^{2,\delta}:=P_2h_t^\delta$.

\vspace{1mm}
After setting $F(W)=\frac{1}\delta\Lambda(X^\delta)$ and rescaling the controls by $\sqrt{\delta}$, we attain the following representation
\begin{equation*}
	-\delta\log\mathbb{E}\left[\exp\left\{-\frac{1}{\delta}\Lambda(X^\delta)\right\}\right] =\inf_{h\in\mathcal{A}}\mathbb{E}\left[\frac{1}{2}\int_0^T|h_s|^2ds+\Lambda(X^{\delta,h})\right],
\end{equation*}
where $X^{\delta,h}$ is defined by (\ref{2.6}) with $h$ replacing $h^\mathrm{\delta}$. Hence, we need  to investigate the limiting behaviors of the controlled  process $X^{\delta,h^\delta}$ in the  weak convergence approach.

Due to the result of  the averaging principle (cf.~\cite[Theorem 2.1]{vvv}), let $\delta \to 0$ (hence $\varepsilon \to 0$) in (\ref{n1}), we can get the following ordinary differential equation
\begin{equation}\label{2.9}
	\frac{d\bar{X}_t}{dt}=\bar{b}(\bar{X}_t,\mathscr{L}_{\bar{X}_t}),~~\bar{X}_0=x,
\end{equation}
where the new coefficient $\bar{b}$ is defined by
\begin{equation*}\label{2.10}
	\bar{b}(x,\mu)=\int_{\mathbb{R}^m}b^{\nu}(x,\mu,y)\nu(dy).
\end{equation*}
By \cite[Theorem 2.1]{vvv}, we obtain that $X^\delta$ converges to  $\bar{X}$ in the $L^2(\Omega;C([0,T];\mathbb{R}^n))$-sense,  i.e.,
\begin{equation}\label{z3.1}
	\mathbb{E}\Big[\sup\limits_{t\in[0,T]}|X_t^\delta-\bar{X}_t|^2\Big] \to 0,~~ \text{as} ~\delta \to 0.
	\end{equation}
Here, we denote $b^{\nu}(x,\mu,y) := b(x, \mu, y, \nu)$, and $\nu$ denotes the unique invariant measure of the process $Y_t$ satisfying the following equation
\begin{equation}\label{n2}
		dY_t=f(Y_t)dt+g(Y_t)d\tilde{W}_t^2,~~Y_0=y,
\end{equation}
where $\{\tilde{W}_t^2\}_{t\geq0}$ is a $d_2$-dimensional Brownian motion on complete filtered probability space $({\Omega},{\mathcal{F}},({\mathcal{F}}_t)_{t\geq 0},{\mathbb{P}}).$

It is evident that (\ref{2.9}) has a unique solution, denoted throughout this work as $\bar{X}$, which satisfies $\bar{X} \in C([0,T];\mathbb{R}^n)$. We also remark that the solution $\bar{X}$ of (\ref{2.9}) is a deterministic path, and its law $\mathscr{L} _{\bar{X} _{t}}= \delta_{\bar{X} _{t}},$ where $\delta_{\bar{X} _{t}}$ is the Dirac measure of $\bar{X}_{t}.$

\subsection{Main result}
 In order to present the main LDP result, we first introduce some additional notations and definitions.

 Let $d:=d_1 + d_2$ and use $\mathbb{R}^d := \mathbb{R}^{d_1} \times \mathbb{R}^{d_2}$ to denote the space in which the control $h$ takes values. In addition, we denote  $h^{1}:=P_1 h,~h^{2}:=P_2 h$, and  $\mathcal{Y}:=\mathbb{R}^m$ to emphasize the state space of the fast component. Let $A_1,A_2,A_3$ be Borel subsets in $\mathbb{R}^d, \mathcal{Y}, [0,T]$, respectively. Let $\Delta:=\Delta(\delta)$ be a time-scale separation, which satisfies
\begin{align}\label{2.11}
	\Delta(\delta)\to0,~\frac{\varepsilon}{\delta \Delta} \to 0,  ~\mathrm{as~} \delta\to0.
\end{align}

Concerning the joint perturbations of the control and the controlled fast process in the multi-scale system (\ref{2.7}), it is proper to introduce the following occupation measure
\begin{equation}\label{2.12}
	\mathbf{P}^{\delta,\Delta}(A_1\times A_2\times A_3):=\int_{A_3}\dfrac{1}{\Delta}\int_t^{t+\Delta}\mathbf{1}_{A_1}(h_s^{\delta})\mathbf{1}_{A_2}(Y_s^{\delta,h^{\delta}})dsdt,
\end{equation}
which captures the joint limit behaviour of $h^{\delta}$  and $Y^{\delta,h^{\delta}}$. Throughout the work, we adopt the convention that the control
\begin{equation}\label{e04}
h_t= h_t^\delta= 0~~~~~~\text{if}~~~~~~~ t> T.
\end{equation}
 We also mention that for any bounded continuous functions $\psi$, we have
\begin{align}\label{2.13}
	&\int_{\mathbb{R}^d\times\mathcal{Y}\times[0,T]}\psi(X_{t}^{\delta,h^{\delta}},\mathscr{L}_{X_{t}^{\delta}},y,\mathscr{L}_{Y_{t}^{\delta}},h)\mathbf{P}^{\delta,\Delta}(dhdydt) \nonumber\\
	=&\int_{0}^{T}\frac{1}{\Delta}\int_{t}^{t+\Delta}\psi(X_{t}^{\delta,h^{\delta}},\mathscr{L}_{X_{t}^{\delta}},Y_{s}^{\delta,h^{\delta}},\mathscr{L}_{Y_{t}^{\delta}},h_{s}^{\delta})dsdt.
\end{align}

Define a map $\Phi:\mathbb{R}^n\times\mathcal{P}_2(\mathbb{R}^n)\times\mathcal{Y}\times\mathcal{P}_2(\mathbb{R}^m)\times\mathbb{R}^d\to\mathbb{R}^n$ by
\begin{equation}\label{2.14}
	 \Phi(x,\mu,y,\nu,h):=\bar{b}(x,\mu)+\sigma(x,\mu,y,\nu)P_1h.
\end{equation}

Recall  that $\nu$ is the unique invariant measure associated with $(\ref{n2})$. In what follows, we recall the concept of viable pair in the mean-field version, which effectively characterizes the limits of the controlled slow-fast systems.
\begin{definition}$($Viable pair$)$\label{d2.4}
	 A pair $(\varphi,\mathbf{P})\in $ $C([0,T];\mathbb{R}^n)\times\mathcal{P}(\mathbb{R}^d\times\mathcal{Y}\times[0,T])$ is said to be  viable w.r.t.~$(\Phi,\nu,x, \bar{X} )$  and we write $( \varphi, \mathbf{P}) \in \mathcal{V} _{(\Phi,\nu,x, \bar{X} )}$, if the following statements hold:
	
	$(i)$ The measure $\mathbf{P}$ has finite second moments, i.e.,
	
	\begin{equation*}\label{2.15}
		\int_{\mathbb{R}^d\times\mathcal{Y}\times[0,T]}\big[|h|^{2}+|y|^{2}\big]\mathbf{P}(dhdyds)<\infty.
	\end{equation*}
	
	$(ii)$ For all $t\in [ 0, T] , $
	
	\begin{equation}\label{2.16}
	\varphi_{t}=x+\int_{\mathbb{R}^d\times\mathcal Y\times[0,t]}\Phi(\varphi_{s},\mathscr{L}_{\bar{X}_{s}},y,\nu,h)\mathbf{P}(dhdyds).
	\end{equation}
	
	$(iii)$ For all $A_1\times A_2\times A_3\in\mathcal{B}(\mathbb{R}^d\times\mathcal{Y}\times[0,T])$,
	
	\begin{equation}\label{2.17}
		\mathbf{P}(A_{1}\times A_{2}\times A_{3})=\int_{A_{3}}\int_{A_{2}}\eta(A_{1}|y,t)\nu(dy)dt,	
	\end{equation}
	where $\eta$ is a stochastic kernel $($cf.~\cite[Appendix B.2]{f}$)$ given $(y,t)\in\mathcal{Y}\times[0,T]$. In particular, this implies that the last marginal of $\mathbf{P}$ is the Lebesgue measure on $[0,T]$, i.e., for all $t\in[0,T]$,
	\begin{equation}\label{2.18}
		\mathbf{P}(\mathbb{R}^d\times\mathcal{Y}\times[0,t])=t.
	\end{equation}
\end{definition}

The following is the main result in this work.
\begin{theorem}\label{t2.1}
	Suppose that the assumptions $(\mathbf{A_1})$-$(\mathbf{A_2})$ hold and the scale $\varepsilon=\varepsilon(\delta)$ satisfies $\lim_{\delta\to0} \varepsilon/\delta = 0$.  Then $\{X^\delta\}_{\delta>0}$ satisfies the LDP with the good rate function $I$ given by
	\begin{equation}\label{sulv}
		I(\varphi):=\inf_{(\varphi,\mathbf{P})\in\mathcal{V}_{(\Phi,\nu,x, \bar{X} )}}\left\{\frac{1}{2}\int_{\mathbb{R}^d\times\mathcal{Y}\times[0,T]}|h|^{2} \mathbf{P}(dhdydt)\right\}
	\end{equation}
	with the convention that the infimum over the empty set is $\infty$, where $\Phi$ is defined by $(\ref{2.14})$ and  $\bar{X}$ is the solution of $(\ref{2.9})$.

Furthermore, the rate function $I$ has the following explicit representation
\begin{align*} I(\varphi)=\begin{cases}\frac{1}{2}\int_0^T|Q^{-1/2}(\varphi_{t},\mathscr{L}_{\bar{X}_t},\nu)(\dot{\varphi}_t-\bar{b}(\varphi_t,\mathscr{L}_{\bar{X}_t}))|^2dt,&\varphi(0)=x,\varphi ~\text{is absolutely continuous},\\+\infty,&\text{otherwise},\end{cases}
\end{align*}
where
\begin{align}\label{z2.21}
	Q(\varphi_{t},\mathscr{L}_{\bar{X}_{t}},\nu):=\int_{\mathcal{Y}}\sigma(\varphi_{t},\mathscr{L}_{\bar{X}_{t}},y,\nu) P_1P_1^{*}\sigma^{*}(\varphi_{t},\mathscr{L}_{\bar{X}_{t}},y,\nu)\nu(dy).
\end{align}
\end{theorem}

\begin{Rem}
(i) According to $(\mathbf{A_2})$, it is clear that $\sigma \sigma^*$ is a positive definite matrix. Since $P_1:\mathbb{R}^{d_1 + d_2} \to \mathbb{R}^{d_1}$ is a projection mapping, it follows directly that $(\sigma P_1)(\sigma P_1)^{*}$ is uniformly positive definite, and consequently so is $Q$.

(ii) In fact, the non-degeneracy of $\sigma\sigma^*$ is  required only to transfer the control into the feedback form $\bar{h}(y)$ and derive the explicit rate function representation in the proof of the Laplace upper bound, see Subsection \ref{sec5.4} for details. The proof of the Laplace lower bound does not require the non-degeneracy of $\sigma\sigma^*$, which holds also   for degenerate noise. 
\end{Rem}

\begin{Rem}
(i)	The LDP for multi-scale mean-field SDEs were initially studied in \cite{cc} by employing the weak convergence criterion directly, where only the laws of slow process are considered in the system. Recently, the authors in \cite{vv, yy} established the LDP for multi-scale mean-field SDEs driven by fractional noise.

However, all the existing works \cite{cc, vv, yy} do not allow the cases that the system depends on the laws of fast process $($i.e.~$\mathscr{L}_{Y_t^{\delta}}$$)$ and the diffusion coefficient $\sigma$ in the slow component depends on the fast component. In the present work, we derive the LDP for a more general system $(\ref{n1})$ by utilizing the functional occupation measure and constructing the controls of feedback form, rather than  employing the weak convergence criterion directly.

(ii) We also note that there are two reasons for restricting the coefficients $f$ and $g$ in the fast component to depend only on $Y_t^\delta$. 
Firstly, if $f$ and $g$ depend on the law $\mathscr{L}_{Y_t^{\delta}}$, the frozen equation associated to the fast process would become a McKean-Vlasov SDE, i.e.
\begin{equation}\label{eq0}
dY_t=f(Y_t,\mathscr{L}_{Y_t})dt+g(Y_t,\mathscr{L}_{Y_t})d\tilde{W}_t^2,~~Y_0=y.
\end{equation}
In this case, the operator 
\begin{equation}\label{eq1}
P_tf(y):=\mathbb{E}f(X_t^y)
\end{equation}
is no longer a semigroup, as stated in  \cite{xx}. However, the   time discretization method is employed in our proof, specifically, in the proof of Lemma \ref{l4.2} and in establishing the upper bound of the Laplace principle, where the semigroup and  Markov properties of the operator  $(\ref{eq1})$ are essentially required. An alternative observation is the frozen equation $(\ref{eq0})$ can be transformed into a non-autonomous classical SDE
\begin{equation*}
dY_t=\tilde{f}(t,Y_t)dt+\tilde{g}(t,Y_t)d\tilde{W}_t^2,~~Y_0=y,
\end{equation*}
where $\tilde{f}(t,Y_t):=f(Y_t,\mathscr{L}_{Y_t}), \tilde{g}(t,Y_t):=g(Y_t,\mathscr{L}_{Y_t})$. When employing the time discretization method, the time-homogeneous property of the associated semigroup is also required.

On the other hand, the coefficients $f,g$ are independent of the slow process $X_t^{\delta}$, which corresponds to the non-fully coupled case in multi-scale systems due to technical constraints. In fact, for the fully coupled case, significant difficulties arise in proving the averaging principle. A  recent work \cite{HLX} introduced a novel argument and employed a non-autonomous approximation method to address this issue.
It is of interest to  investigate whether the time discretization method can be  adapted to handle the dependence on the law $\mathscr{L}_{Y_t^{\delta}}$, and to explore whether the non-autonomous approximation method proposed in \cite{HLX} can be extended to study the LDP for fully coupled
multi-scale mean-field systems. These problems are left for future work. 
\end{Rem}

\begin{Rem}
In general, one can investigate the asymptotic behavior in all possible interaction regimes, i.e.,
\begin{equation*}
\lim_{\varepsilon\rightarrow0}\frac{\varepsilon}{\delta}=\left\{
  \begin{array}{ll}
    0, & \hbox{Regime 1;} \\
    \gamma, & \hbox{Regime 2;} \\
    \infty, & \hbox{Regime 3.}
  \end{array}
\right.
\end{equation*}
In the present work, we focus on Regime 1, which  allows for the decoupling of the invariant measure $\nu$ and the control $h$ from the limiting occupation measures. This plays an important role in the proof of Laplace principle upper bound. We  point out that our method is also applicable in proving the Laplace principle lower bound in Regime 2, which does not depend on such decoupling.

In Regimes 2 and 3,  the proof of Laplace principle upper bound becomes significantly more involved.
Specifically,  in  Regimes 2 and 3, the invariant measure $\nu^{h}$ associated with the frozen controlled fast dynamics  depends on the control variable $h$. This dependence on $h$ substantially complicates the analysis; in particular, it is unclear whether $\nu^{h}$ is an invariant measure of certain process or not. 
\end{Rem}

\section{Preliminaries}
\setcounter{equation}{0}
\setcounter{definition}{0}

\subsection{Some a priori estimates}
In this section, we present several a priori estimates of the controlled processes. These estimates will be frequently utilized in proving the main result.

The following are the estimates of solutions $(X^{\delta,h^{\delta}},Y^{\delta,h^{\delta}})$ to the control problem (\ref{2.7}) .

\begin{lemma}\label{ll3.1}
For any $\{h^{\delta}\}_{\delta>0}\subset\mathcal{A}_{M}$, there exists a constant $C_{M,T}>0$ which is independent of $\delta$ such that
	\begin{equation}\label{3.1}
		\mathbb{E}\Big[\sup\limits_{t\in[0,T]}|X_t^{\delta,h^\delta}|^4\Big]\leq C_{M,T}(1+|x|^4+|y|^4),
	\end{equation}
	and for any $p \geq 1$, there exists $C_{p,M,T}>0$ such that
	\begin{equation}\label{3.2}
		\mathbb{E}\Bigg[\Bigg(\int_0^T|Y_t^{\delta,h^\delta}|^2dt\Bigg)^p\Bigg]\leq C_{p,M,T}(1+|y|^{2p}).
	\end{equation}
\end{lemma}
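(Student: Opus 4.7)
My plan is to first derive uniform moment bounds on the \emph{uncontrolled} processes $(X^\delta,Y^\delta)$ in order to control the $\mathscr{L}_{X_t^\delta},\mathscr{L}_{Y_t^\delta}$ terms appearing in (\ref{2.7}), then handle the controlled fast process via dissipativity plus Young, and finally close the argument for the slow process by BDG and Gronwall. For the uncontrolled pair, Itô applied to $|Y_t^\delta|^{2p}$ together with the consequence (\ref{2.4}) of $(\mathbf{A_1})$--(a111) (using the integrating factor $e^{\beta pt/\varepsilon}$) yields $\sup_{\delta,\,t\in[0,T]}\mathbb{E}|Y_t^\delta|^{2p}\le C_p(1+|y|^{2p})$, and then the Lipschitz $b,\sigma$ together with the boundedness of $\sigma$ from $(\mathbf{A_2})$ give $\sup_{\delta,\,t\in[0,T]}\mathbb{E}|X_t^\delta|^{2p}\le C_{p,T}(1+|x|^{2p}+|y|^{2p})$. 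This bounds $\mathbb{W}_2(\mathscr{L}_{X_t^\delta},\delta_0)$ and $\mathbb{W}_2(\mathscr{L}_{Y_t^\delta},\delta_0)$ uniformly in $\delta$.

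For the controlled fast process, I would apply Itô to $|Y^{\delta,h^\delta}_t|^2$ and use (\ref{2.4}) plus the boundedness of $g$ to bound the drift/trace by $-\tfrac{\beta}{\varepsilon}|Y|^2+\tfrac{C_\beta}{\varepsilon}$, while the control term $\tfrac{2}{\sqrt{\delta\varepsilon}}\langle Y,g(Y)h_t^{2,\delta}\rangle$ is split by Young as $\tfrac{\beta}{2\varepsilon}|Y|^2+\tfrac{C}{\delta}|h^{2,\delta}|^2$. Multiplying by $e^{\beta t/(2\varepsilon)}$ and integrating gives
$$|Y_t^{\delta,h^\delta}|^2\le |y|^2+C+\tfrac{C}{\delta}\int_0^t e^{-\beta(t-s)/(2\varepsilon)}|h_s^{2,\delta}|^2\,ds+\widetilde N_t,$$
where $\widetilde N_t$ is an Itô-martingale-type remainder. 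Taking expectation, integrating over $[0,T]$, swapping integrals by Fubini (which produces a factor $\tfrac{2\varepsilon}{\beta}$ in front of $\int_0^T|h^{2,\delta}|^2\,ds$), and finally using $\int_0^T|h^{2,\delta}|^2\,ds\le M$ together with $\varepsilon/\delta\to 0$ yields $\mathbb{E}\int_0^T|Y|^2\,dt\le C_{M,T}(1+|y|^2)$, proving (\ref{3.2}) for $p=1$. For $p>1$ I would use $(\int_0^T|Y|^2\,dt)^p\le T^{p-1}\int_0^T|Y|^{2p}\,dt$ (Jensen) and apply Itô to $|Y|^{2p}$, obtaining the analogous inequality with an additional term $\tfrac{C_p\varepsilon}{\delta}\mathbb{E}\int_0^T|Y|^{2p-2}|h^{2,\delta}|^2\,ds$, which I would close by induction on $p$, invoking $\int_0^T|h^{2,\delta}|^2\le M$ and the inductive bound on lower moments of $Y^{\delta,h^\delta}$.

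For (\ref{3.1}), I would apply Itô to $|X^{\delta,h^\delta}_t|^2$. The Lipschitz hypothesis $(\mathbf{A_1})$--(a1) gives the linear growth $|b(x,\mu,y,\nu)|\le C(1+|x|+|y|+\mathbb{W}_2(\mu,\delta_0)+\mathbb{W}_2(\nu,\delta_0))$, while $(\mathbf{A_2})$ provides $\|\sigma\|^2\le c_2$. Hence the drift part is bounded by $C(1+|X|^2+|Y|^2+\mathbb{E}|X^\delta|^2+\mathbb{E}|Y^\delta|^2)$, the control drift by $\tfrac{1}{2}|X|^2+\tfrac{c_2}{2}|h^{1,\delta}|^2$, and the diffusion term is handled by BDG using $\|\sigma\|\le\sqrt{c_2}$. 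Taking $\mathbb{E}\sup_{t\le T}$ and absorbing the martingale piece in the standard way produces, after Gronwall,
$$\mathbb{E}\sup_{t\le T}|X_t^{\delta,h^\delta}|^2\le C_{M,T}\Big(1+|x|^2+\int_0^T\mathbb{E}|Y_t^{\delta,h^\delta}|^2\,dt+\int_0^T\mathbb{E}|Y_t^\delta|^2\,dt+\int_0^T\mathbb{E}|X_t^\delta|^2\,dt\Big),$$
and the three integrals are controlled by the preceding steps, yielding (\ref{3.1}).

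The main obstacle I anticipate is the higher-moment estimate for $Y^{\delta,h^\delta}$: the control term $\tfrac{1}{\sqrt{\delta\varepsilon}}g(Y)h^{2,\delta}$ is singular in $\delta$, so the dissipation rate $\beta/\varepsilon$ must beat the effective control intensity $1/\sqrt{\delta\varepsilon}$, which is precisely where the hypothesis $\varepsilon/\delta\to 0$ enters. Closing the induction on $p$ requires carefully balancing the Young splitting so that the problematic $|Y|^{2p}$ piece is absorbable into the dissipation while the residual term $|Y|^{2p-2}|h^{2,\delta}|^2$ is tamed by the $L^2$-constraint on $h^{2,\delta}$ (giving a factor $M$) combined with the inductive hypothesis on the $(p-1)$-th moment.
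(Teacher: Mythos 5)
Your treatment of \eqref{3.1}, of the uncontrolled moments, and of the $p=1$ case of \eqref{3.2} is essentially the paper's argument (the paper simply cites \cite[Lemma 2.2]{vvv} for the uncontrolled bounds rather than re-deriving them, and keeps the dissipation term $\frac{\kappa_0}{\varepsilon}\int_0^T|Y_s^{\delta,h^\delta}|^2ds$ on the left of the pathwise inequality instead of using an integrating factor; both are fine).

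The gap is in your extension to $p>1$. Passing through $\big(\int_0^T|Y|^2dt\big)^p\le T^{p-1}\int_0^T|Y|^{2p}dt$ and applying It\^o to $|Y|^{2p}$ produces, after the Young splitting of the control drift, the residual term $\frac{C_p\varepsilon}{\delta}\,\mathbb{E}\int_0^T|Y_s^{\delta,h^\delta}|^{2p-2}|h_s^{2,\delta}|^2ds$, and this term is \emph{not} closable by the tools you invoke. The inductive hypothesis controls $\mathbb{E}\int_0^T|Y|^{2p-2}ds$ (a time integral against Lebesgue measure), not the integral against the random measure $|h_s^{2,\delta}|^2ds$; the only way to use $\int_0^T|h^{2,\delta}|^2ds\le M$ directly is to pull out $\sup_{t\le T}|Y_t^{\delta,h^\delta}|^{2p-2}$, whose expectation is not bounded uniformly in $\delta$ (the running supremum of the fast process degenerates as $\varepsilon\to0$); and a pointwise Young splitting $|Y|^{2p-2}|h|^2\le \lambda|Y|^{2p}+C_\lambda|h|^{2p}$ fails because $h\in\mathcal{A}_M$ gives no control of $\int_0^T|h|^{2p}ds$ for $p>1$. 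The paper avoids this entirely: it never raises the moment of $Y$ itself, but instead takes the $p=1$ pathwise inequality
\begin{equation*}
\frac{\kappa_0}{\varepsilon}\int_0^T|Y_s^{\delta,h^\delta}|^2ds\le |y|^2+\frac{C_T}{\varepsilon}+\frac{C}{\delta}\int_0^T|h_s^{2,\delta}|^2ds+\sup_{t\in[0,T]}|M_t|
\end{equation*}
and raises the \emph{whole inequality} to the $p$-th power, so the control contributes only $\big(\tfrac{\varepsilon}{\delta}\big)^pM^p$ and the martingale supremum is handled by Burkholder--Davis--Gundy followed by Young's inequality, which lets $\varepsilon^{p/2}\mathbb{E}\big[(\int_0^T|Y|^2ds)^{p/2}\big]$ be absorbed into the left-hand side. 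You should replace your induction by this one-line exponentiation; as written, the inductive step does not close.
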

\begin{proof}
Using It\^o's formula for $|Y_{t}^{\delta,h^{\delta}}|^{2}$, we have
	\begin{align}\label{3.3}
		|Y_{t}^{\delta,h^{\delta}}|^{2} =&|y|^{2}+\frac{1}{\varepsilon}\int_{0}^{t}\Big[2\langle f(Y_{s}^{\delta,h^{\delta}}),Y_{s}^{\delta,h^{\delta}}\rangle+\|g(Y_{s}^{\delta,h^{\delta}})\|^{2}\Big]ds \nonumber \\
		&+\frac{2}{\sqrt{\delta\varepsilon}}\int_{0}^{t}\langle g(Y_{s}^{\delta,h^{\delta}})h_{s}^{2,\delta},Y_{s}^{\delta,h^{\delta}}\rangle ds+M_{t},
	\end{align}
	where $M_t$ is a local martingale given by
	\begin{equation}\nonumber
		M_t:=\frac{2}{\sqrt{\varepsilon}}\int_0^t\langle Y_s^{\delta,h^\delta},g(Y_s^{\delta,h^\delta})dW_s^2\rangle.
	\end{equation}

	By $({\mathbf{A_2}})$, we also have
	\begin{align}\label{3.4}
		\frac{2}{\sqrt{\delta\varepsilon}}\langle g(Y_{s}^{\delta,h^{\delta}})h_{s}^{2,\delta},Y_{s}^{\delta,h^{\delta}}\rangle \leq&\frac{C}{\sqrt{\delta\varepsilon}}|h_{s}^{2,\delta}||Y_{s}^{\delta,h^{\delta}}| \nonumber \\
		\leq&\frac{C}{\delta}|h_{s}^{2,\delta}|^{2}+\frac{\tilde{\beta}}{\varepsilon}|Y_{s}^{\delta,h^{\delta}}|^{2},
	\end{align}
	where in the last step we applied Young's inequality with a small constant $\tilde{\beta}\in(0,\beta)$ in which $\beta$ is defined in (\ref{2.4}).
	
	Combining (\ref{2.4}) and (\ref{3.3})-(\ref{3.4}), it follows that
	\begin{align}\nonumber
		\frac{\kappa_0}{\varepsilon}\int_0^T|Y_s^{\delta,h^\delta}|^2ds\leq|y|^2+\frac{C_T}{\varepsilon}+\frac{C}{\delta}\int_0^T|h_s^{2,\delta}|^2ds+\sup_{t\in[0,T]}|M_t|,
	\end{align}
	where $\kappa_0:=\beta-\tilde{\beta}>0$. Then
	\begin{align}\nonumber
		\mathbb{E}\Bigg[\Bigg(\int_{0}^{T}|Y_{s}^{\delta,h^{\delta}}|^{2}ds\Bigg)^{p}\Bigg] \leq& C_{p}\varepsilon^{p}|y|^{2p}+C_{p,T}+{\frac{C_{p}\varepsilon^{p}}{\delta^{p}}}\mathbb{E}\Bigg[\Bigg(\int_{0}^{T}|h_{s}^{2,\delta}|^{2}ds\Bigg)^{p}\Bigg]+C_{p}\varepsilon^{p}\mathbb{E}\Big[\Big(\sup_{t\in[0,T]}|M_{t}|\Big)^{p}\Big], \\
		\leq&\frac{1}{2}\mathbb{E}\Bigg[\Bigg(\int_{0}^{T}|Y_{s}^{\delta,h^{\delta}}|^{2}ds\Bigg)^{p}\Bigg]+C_{p,T}(1+|y|^{2p})+\frac{C_{p,M,T}\varepsilon^{p}}{\delta^{p}}, \nonumber
	\end{align}
	where we utilized the fact that $ h^{\delta}\in\mathcal{A}_{M}$ and the following estimate in the second step
	\begin{align}
		C_{p}\varepsilon^{p}\mathbb{E}\Big[\Big(\operatorname*{sup}_{t\in[0,T]}|M_{t}|\Big)^{p}\Big]\leq& C_{p}\varepsilon^{\frac{p}{2}}\mathbb{E}\Bigg[\Bigg(\int_{0}^{T}|Y_{s}^{\delta,h^{\delta}}|^{2}ds\Bigg)^{p/2}\Bigg] \nonumber \\
		\leq&\frac{1}{2}\mathbb{E}\Bigg[\Bigg(\int_{0}^{T}|Y_{s}^{\delta,h^{\delta}}|^{2}ds\Bigg)^{p}\Bigg]+C_{p}.\nonumber
	\end{align}
Due to the condition $\lim_{\delta\to0} \varepsilon/\delta = 0$, without loss of generality, we can assume $\frac{\varepsilon}{\delta} < 1$. Thus, (\ref{3.2}) holds.

We proceed to show (\ref{3.1}).
	Firstly, we have
	\begin{align}\label{3.5}
		|X_{t}^{\delta,h^{\delta}}|^{4}
		\leq&|x|^{4}+C\Big|\int_{0}^{t} b(X_{s}^{\delta,h^{\delta}},\mathscr{L}_{X_{s}^{\delta}},Y_{s}^{\delta,h^{\delta}},\mathscr{L}_{Y_{s}^{\delta}}) ds\Big|^4
		\nonumber \\
		&+C\Big|\int_{0}^{t}\sigma(X_{s}^{\delta,h^{\delta}},\mathscr{L}_{X_{s}^{\delta}},Y_{s}^{\delta,h^{\delta}},\mathscr{L}_{Y_{s}^{\delta}})h_{s}^{1,\delta}ds\Big|^4
		\nonumber \\ &
		+C\delta^2\Big|\int_{0}^{t}\sigma(X_{s}^{\delta,h^{\delta}},\mathscr{L}_{X_{s}^{\delta}},Y_{s}^{\delta,h^{\delta}},\mathscr{L}_{Y_{s}^{\delta}})dW_{s}^{1}\Big|^4 \nonumber\\
		=&: |x|^{4}+(\text{I})+(\text{II})+(\text{III}).
	\end{align}
	Due to the condition $(\mathbf{A_1})$, we have
	\begin{align}\label{3.6}
		(\text{I}) \leq C_T& \Bigg(\int_{0}^{T} |b(X_{s}^{\delta,h^{\delta}},\mathscr{L}_{X_{s}^{\delta}},Y_{s}^{\delta,h^{\delta}},\mathscr{L}_{Y_{s}^{\delta}})|^2ds\Bigg)^2 \nonumber\\
		\leq C_T& + C_T\int_{0}^{T}|X_{s}^{\delta,h^{\delta}}|^{4}ds+ C_T\bigg(\int_{0}^{T}\big(\mathbb{E}|X_{s}^{\delta}|^{2}+\mathbb{E}|Y_{s}^{\delta}|^{2}\big)ds\bigg)^2+ C_T\Bigg(\int_{0}^{T}|Y_{s}^{\delta,h^{\delta}}|^{2}ds\Bigg)^2.
	\end{align}
	By condition $(\mathbf{A_2})$ and H\"{o}lder's inequality we derive the following estimate for the third term on the right-hand side of (\ref{3.5})
	\begin{align}\label{3.7}
		(\text{II}) &\leq C_T\Bigg(\int_{0}^{T}\|\sigma(X_{s}^{\delta,h^{\delta}},\mathscr{L}_{X_{s}^{\delta}},Y_{s}^{\delta,h^{\delta}},\mathscr{L}_{Y_{s}^{\delta}})\|^2\cdot|h_{s}^{1,\delta}|^2ds\Bigg)^{2} \nonumber \\
		&\leq C_T\Bigg(\int_{0}^{T}|h_{s}^{1,\delta}|^2ds\Bigg)^{2} \nonumber \\
		&\leq C_{M,T},
	\end{align}
	where we used the fact $\{h^{\delta}\}_{\delta>0}\subset\mathcal{A}_{M}$  in the last step.
	
	By substituting (\ref{3.6})-(\ref{3.7}) into (\ref{3.5}) and taking expectation, we obtain
	\begin{align}\label{es7}
		\mathbb{E}\Big[\sup_{t\in[0,T]}|X_{t}^{\delta,h^{\delta}}|^{4}\Big]
		&\leq C_{M,T} (1+|x|^{4})+ C_{T} \mathbb{E}\int_{0}^{T}|X_{s}^{\delta,h^{\delta}}|^{4}ds+ C_T \bigg(\int_{0}^{T}\big(\mathbb{E}|X_{s}^{\delta}|^{2}+\mathbb{E}|Y_{s}^{\delta}|^{2}\big)ds\bigg)^2\nonumber \\
		&+C_{T} \mathbb{E}\left(\int_{0}^{T}|Y_{t}^{\delta,h^{\delta}}|^{2}dt\right)^2+C \mathbb{E}\Big[\sup_{t\in[0,T]}(\text{III})\Big].
	\end{align}
	Utilizing Burkholder-Davis-Gundy's inequality and $(\mathbf{A_2})$, we have
	\begin{align}
		\mathbb{E}\Big[\sup_{t\in[0,T]}(\text{III})\Big]&\leq \delta^2\mathbb{E}\Bigg(\int_{0}^{T}\|\sigma(X_{s}^{\delta,h^{\delta}},\mathscr{L}_{X_{s}^{\delta}},Y_{s}^{\delta,h^{\delta}},\mathscr{L}_{Y_{s}^{\delta}})\|^2ds\Bigg)^2
		\nonumber \\
		& \leq  C_{T} \delta^2.
	\end{align}
	Following from \cite[Lemma 2.2]{vvv} that we can get the following uniform estimates 
	\begin{align}
		\mathbb{E}\Big[\sup\limits_{t\in[0,T]}|X_t^\delta|^2\Big]\leq& C_T(1+|x|^2+|y|^2),\label{e02}\\
		\sup_{t\in[0,T]}\mathbb{E}|Y_{t}^{\delta}|^{2}\leq& C_{T}(1+|x|^{2}+|y|^{2}).\label{e03}
	\end{align}
	Collecting (\ref{es7})-(\ref{e03}) and utilizing Gronwall's lemma, we obtain
	\begin{equation}\nonumber
		\mathbb{E}\Big[\sup\limits_{t\in[0,T]}|X_t^{\delta,h^\delta}|^4\Big]\leq C_{M,T}(1+|x|^4+|y|^4).
	\end{equation}
	The proof is complete.
\end{proof}

The following lemma provides a time H{\"o}lder continuity estimate for the controlled process $X_{t}^{\delta,h^{\delta}}$, which plays an important role for demonstrating the existence of viable pairs later in the proof.

\begin{lemma}\label{l3.3}
There exists $C_{T} > 0$ such that for any $0\leq t\leq t+\Delta\leq T,$
	\begin{equation}\label{3.12}
		\mathbb{E}|X_{t+\Delta}^{\delta,h^\delta}-X_t^{\delta,h^\delta}|^4\leq C_{M,T}(1+|x|^4+|y|^4)\Delta^2.
	\end{equation}
\end{lemma}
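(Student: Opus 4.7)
My plan is to decompose the increment using the integral form of the controlled equation (2.7) into drift, control, and martingale parts, and bound each separately in $L^2$ by $C_{M,T}(1+|x|^2+|y|^2)\Delta$. Write
\begin{equation*}
X_{t+\Delta}^{\delta,h^{\delta}}-X_{t}^{\delta,h^{\delta}}=J_{1}+J_{2}+J_{3},
\end{equation*}
where $J_1$ is the drift integral, $J_2=\int_t^{t+\Delta}\sigma(X_s^{\delta,h^\delta},\mathscr{L}_{X_s^\delta},Y_s^{\delta,h^\delta},\mathscr{L}_{Y_s^\delta})h_s^{1,\delta}\,ds$, and $J_3=\sqrt{\delta}\int_t^{t+\Delta}\sigma(\cdots)\,dW_s^1$. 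I will use: (i) the linear growth of $b$ in $(x,y,\mu,\nu)$ coming from the Lipschitz bound \eqref{a1} evaluated at a fixed reference point; (ii) the uniform boundedness of $\sigma$, which follows from $(\mathbf{A_2})$ since $c_1 I\le \sigma\sigma^*\le c_2 I$; and (iii) the a priori second moment bounds already at our disposal.

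For the drift term $J_1$, apply Cauchy--Schwarz in time to obtain
\begin{equation*}
\mathbb{E}|J_1|^2\le \Delta\,\mathbb{E}\int_t^{t+\Delta}|b(X_s^{\delta,h^\delta},\mathscr{L}_{X_s^\delta},Y_s^{\delta,h^\delta},\mathscr{L}_{Y_s^\delta})|^2\,ds,
\end{equation*}
and then use the linear-growth estimate $|b|^2\le C(1+|X_s^{\delta,h^\delta}|^2+|Y_s^{\delta,h^\delta}|^2+\mathscr{L}_{X_s^\delta}(|\cdot|^2)+\mathscr{L}_{Y_s^\delta}(|\cdot|^2))$. Inserting the uniform-in-$s$ moment bounds from Lemma~\ref{ll3.1} together with \eqref{e02}--\eqref{e03} handles the $X$ and law terms cleanly and yields contributions of order $\Delta^2 \le T\Delta$. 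For the potentially problematic $Y^{\delta,h^\delta}$ contribution (we only have the integrated bound \eqref{3.2}, not a pointwise one), I will estimate
\begin{equation*}
\Delta\cdot\mathbb{E}\int_t^{t+\Delta}|Y_s^{\delta,h^\delta}|^2\,ds \le \Delta\cdot\mathbb{E}\int_0^T|Y_s^{\delta,h^\delta}|^2\,ds \le C_{M,T}(1+|y|^2)\Delta,
\end{equation*}
which is still of the required order $\Delta$.

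For the control term $J_2$, Cauchy--Schwarz in time combined with the a.s. bound $\int_0^T|h_s^{\delta}|^2\,ds\le M$ gives
\begin{equation*}
\mathbb{E}|J_2|^2\le M\cdot\mathbb{E}\int_t^{t+\Delta}\|\sigma(\cdots)\|^2\,ds \le C_M \Delta,
\end{equation*}
where the last step uses the uniform upper bound on $\|\sigma\|^2$ from $(\mathbf{A_2})$. For the stochastic integral $J_3$, Itô's isometry gives
\begin{equation*}
\mathbb{E}|J_3|^2=\delta\,\mathbb{E}\int_t^{t+\Delta}\|\sigma(\cdots)\|^2\,ds\le C\delta\Delta\le C\Delta.
\end{equation*}
Summing the three estimates yields \eqref{3.12}. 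The main conceptual obstacle is the $|Y_s^{\delta,h^\delta}|^2$ term in the drift, where no pointwise-in-$s$ moment bound is available uniformly in $\delta$; this is circumvented precisely because an extra factor of $\Delta$ comes out of the Cauchy--Schwarz step, allowing me to trade it against the full-interval bound \eqref{3.2}. The uniform boundedness of $\sigma$ from $(\mathbf{A_2})$ is likewise essential for controlling $J_2$ and $J_3$ without needing pointwise control of $Y^{\delta,h^\delta}$.
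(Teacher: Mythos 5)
Your proposal is correct and follows essentially the same route as the paper: the same three-term decomposition into drift, control, and stochastic integral, with Cauchy--Schwarz in time plus linear growth of $b$ and the a priori bounds \eqref{3.1}, \eqref{3.2}, \eqref{e02}--\eqref{e03} for the drift, the $\mathcal{A}_M$ bound and boundedness of $\sigma$ for the control term, and It\^o isometry/BDG for the martingale term. The point you flag about trading the extra factor of $\Delta$ against the integrated bound \eqref{3.2} for $Y^{\delta,h^\delta}$ is exactly how the paper handles that term as well.
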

\begin{proof}
Due to (\ref{2.7}), we have
	\begin{align}\label{3.13}
		& \mathbb{E}|X_{t+\Delta}^{\delta,h^{\delta}}-X_{t}^{\delta,h^{\delta}}|^{4}
\nonumber \\
\leq& C\mathbb{E}\bigg|\int_{t}^{t+\Delta}b(X_{s}^{\delta,h^{\delta}},\mathscr{L}_{X_{s}^{\delta}},Y_{s}^{\delta,h^{\delta}},\mathscr{L}_{Y_{s}^{\delta}})ds\bigg|^{4} \nonumber \\
		&+C\mathbb{E}\Big|\int_{t}^{t+\Delta}\sigma(X_{s}^{\delta,h^{\delta}},\mathscr{L}_{X_{s}^{\delta}},Y_{s}^{\delta,h^{\delta}},\mathscr{L}_{Y_{s}^{\delta}})h_{s}^{1,\delta}ds\Big|^{4} \nonumber \\
		&+C\delta^2\mathbb{E}\Big|\int_{t}^{t+\Delta}\sigma(X_{s}^{\delta,h^{\delta}},\mathscr{L}_{X_{s}^{\delta}},Y_{s}^{\delta,h^{\delta}},\mathscr{L}_{Y_{s}^{\delta}})dW_{s}^{1}\Big|^{4} \nonumber \\
		=:&\mathscr{J}_1+\mathscr{J}_2+\mathscr{J}_3.
	\end{align}
	As for the term $\mathscr{J}_1$, it follows from  (\ref{a1}), (\ref{3.1}), (\ref{3.2}) and H{\"o}lder's inequality that
	\begin{align}\label{3.14}
		\mathscr{J}_1\leq&C\Delta^2\mathbb{E}\Big(\int_{t}^{t+\Delta}|b(X_{s}^{\delta,h^{\delta}},\mathscr{L}_{X_{s}^{\delta}},Y_{s}^{\delta,h^{\delta}},\mathscr{L}_{Y_{s}^{\delta}})|^{2}ds\Big)^2 \nonumber \\	\leq&C\Delta^2\bigg(1+\mathbb{E}\int_{t}^{t+\Delta}|X_{s}^{\delta,h^{\delta}}|^{4}ds+\Big(\int_{t}^{t+\Delta}\mathbb{E}|X_{s}^{\delta}|^{2}ds\Big)^2
\nonumber \\	
&+\mathbb{E}\Big(\int_{t}^{t+\Delta}|Y_{s}^{\delta,h^{\delta}}|^{2}ds\Big)^2+\Big(\int_{t}^{t+\Delta}\mathbb{E}|Y_{s}^{\delta}|^{2}ds\Big)^2\bigg)\nonumber \\
		\leq& C_{M,T}(1+|x|^{4}+|y|^{4})\Delta^2.
	\end{align}
	As for the term $\mathscr{J}_2$, due to the condition $(\mathbf{A_2})$ we have
	\begin{align}\label{3.15}
		\mathscr{J}_2\leq&C\mathbb{E}\left(\int_{t}^{t+\Delta}\|\sigma(X_{s}^{\delta,h^{\delta}},\mathscr{L}_{X_{s}^{\delta}},Y_{s}^{\delta,h^{\delta}},\mathscr{L}_{Y_{s}^{\delta}})\|^{2}ds\cdot\int_{0}^{T}|h_{s}^{1,\delta}|^{2}ds\right)^2  \nonumber\\
		\leq& C_{M,T}\Delta^2.
	\end{align}
	Similarly, as for the term $\mathscr{J}_3$, by Burkholder-Davis-Gundy's inequality it follows that
	\begin{equation}\label{zzz3.13}
		\mathscr{J}_3\leq C_{M,T}\delta^2\Delta^2.
	\end{equation}
	Combining (\ref{3.13})-(\ref{zzz3.13}) implies (\ref{3.12}) holds. We complete the proof.
\end{proof}

\subsection{Tightness of controlled processes}
In the following proposition, we prove the tightness of the family $\{(X^{\delta,h^{\delta}},\mathcal{R}_1^\delta,\mathcal{R}_2^\delta,\mathbf{P}^{\delta,\Delta})\}_{\delta>0}$
in
$$\mathcal{Z}_T:=C([0,T];\mathbb{R}^n)^{\otimes3}\times \mathscr{P}(\mathbb{R}^d\times\mathcal{Y}\times[0,T]), $$
\noindent where $C([0,T];\mathbb{R}^n)^{\otimes3}:=C([0,T];\mathbb{R}^n)\times C([0,T];\mathbb{R}^n)\times C([0,T];\mathbb{R}^n)$, and we also prove the uniform integrability  of the family $\{\mathbf{P}^{\delta,\Delta}\}_{\delta>0}$. 
	
	For reader's convenience, we recall
	\begin{align}\label{4.1}
		X_t^{\delta,h^{\delta}}=&~x+\int_{0}^{t}b(X_s^{\delta,h^{\delta}},\mathscr{L}_{X_s^{\delta}},Y_s^{\delta,h^{\delta}},\mathscr{L}_{Y_s^{\delta}})ds+\int_{0}^{t}\sigma(X_s^{\delta,h^{\delta}},\mathscr{L}_{X_s^{\delta}},Y_s^{\delta,h^{\delta}},\mathscr{L}_{Y_s^{\delta}})h_s^{1,\delta}ds \nonumber\\
		&+\sqrt{\delta}\int_{0}^{t}\sigma(X_s^{\delta,h^{\delta}},\mathscr{L}_{X_s^{\delta}},Y_s^{\delta,h^{\delta}},\mathscr{L}_{Y_s^{\delta}})dW_s^1 \nonumber\\
		=:&~x+\sum_{i=1}^3\mathcal{R}_i^\delta(t).
	\end{align}

\begin{proposition}\label{p4.1}
Fix $M<\infty$. Suppose $\{h^{\delta}\}_{\delta>0}\subset\mathcal{A}_{M}$, we have

$(i)$ the family $\{(X^{\delta,h^{\delta}},\mathcal{R}_1^\delta,\mathcal{R}_2^\delta,\mathbf{P}^{\delta,\Delta})\}_{\delta>0}$  is tight;

$(ii)$ define the set
\begin{equation}\nonumber
\mathcal{U}_N:=\Big\{(h,y)\in\mathbb{R}^d\times\mathcal{Y}:|h|>N,|y|>N\Big\}.
\end{equation}
Then the family $\{\mathbf{P}^{\delta,\Delta}\}_{\delta>0}$ is uniformly integrable in the sense that
\begin{equation}\nonumber
\lim\limits_{N\to\infty}\sup\limits_{\delta>0}\mathbb{E}\Bigg\{\int_{\mathcal{U}_{N}\times[0,T]}\big[|h|+|y|\big]\mathbf{P}^{\delta,\Delta}(dhdydt)\Bigg\}=0.
\end{equation}
\end{proposition}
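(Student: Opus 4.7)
The plan is to handle (i) and (ii) in three steps, all underwritten by the a priori bounds of Lemmas \ref{ll3.1} and \ref{l3.3}. For the tightness of $\{X^{\delta,h^{\delta}}\}_{\delta>0}$ in $C([0,T];\mathbb{R}^n)$, I note that Kolmogorov's criterion is unavailable because (\ref{3.12}) is only linear in $\Delta$ (not $\Delta^{1+\alpha}$), and strengthening to higher $L^p$ moments would require higher-moment bounds on $Y^{\delta,h^{\delta}}$ that Lemma \ref{ll3.1} does not supply. I would instead verify Aldous's criterion: for stopping times $\tau_1\le\tau_2\le\tau_1+\eta\le T$, split the increment into drift, control, and Brownian parts and bound each in $L^2(\Omega)$ using Cauchy-Schwarz, the boundedness $\|\sigma\|\le\sqrt{c_2}$ from $(\mathbf{A_2})$, the estimates (\ref{3.1})-(\ref{3.2}), (\ref{e02})-(\ref{e03}), and $\int_0^T|h^{\delta}_s|^2 ds\le M$ a.s. This yields $\mathbb{E}|X^{\delta,h^{\delta}}_{\tau_2}-X^{\delta,h^{\delta}}_{\tau_1}|^2\le C_{M,T}(1+|x|^2+|y|^2)\eta$ uniformly in $\delta$ and in the stopping times, so Markov's inequality delivers Aldous's condition; tightness then follows since $X_0^{\delta,h^{\delta}}=x$ is deterministic.

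For the tightness of $\{\mathbf{P}^{\delta,\Delta}\}_{\delta>0}$, I would reduce to tightness of the mean measures $\mathbb{E}\mathbf{P}^{\delta,\Delta}$ on the Polish space $\mathbb{R}^d\times\mathcal{Y}\times[0,T]$. Applying the identity (\ref{2.13}) with $\psi=|h|^2+|y|^2$, Fubini, and the convention (\ref{e04}), together with the elementary fact that $\int_0^T\tfrac{1}{\Delta}\mathbf{1}_{[t,t+\Delta]}(s)\,dt\le 1$ for every $s\ge 0$, gives
\begin{equation*}
\mathbb{E}\!\int_{\mathbb{R}^d\times\mathcal{Y}\times[0,T]}\!\bigl(|h|^2+|y|^2\bigr)\mathbf{P}^{\delta,\Delta}(dhdyds)\le\mathbb{E}\!\int_0^{T+\Delta}\!\!|h_s^{\delta}|^2 ds+\mathbb{E}\!\int_0^{T+\Delta}\!\!|Y_s^{\delta,h^{\delta}}|^2 ds\le M+C_{M,T}(1+|y|^2).
\end{equation*}
Chebyshev's inequality then forces $\sup_\delta\mathbb{E}\mathbf{P}^{\delta,\Delta}(\{|h|>N\}\cup\{|y|>N\})\to 0$ as $N\to\infty$; since $[0,T]$ is compact, tightness follows, and combined with the first step this yields joint tightness of $\{(X^{\delta,h^{\delta}},\mathbf{P}^{\delta,\Delta})\}$ in the product Polish space.

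Part (ii) is then an immediate corollary of the same estimate: on $\mathcal{U}_N$ one has $|h|>N$ and $|y|>N$, so
\begin{equation*}
\mathbb{E}\!\int_{\mathcal{U}_N\times[0,T]}\!(|h|+|y|)\mathbf{P}^{\delta,\Delta}(dhdydt)\le\frac{1}{N}\mathbb{E}\!\int\bigl(|h|^2+|y|^2\bigr)\mathbf{P}^{\delta,\Delta}\le\frac{M+C_{M,T}(1+|y|^2)}{N}\xrightarrow[N\to\infty]{}0,
\end{equation*}
uniformly in $\delta$. The principal obstacle, as flagged above, is the choice of tightness framework for the slow trajectory: the $L^2$-H\"older bound (\ref{3.12}) has no spare power of $\Delta$, so Kolmogorov's criterion fails and one must either strengthen Lemma \ref{l3.3} to higher $L^p$ moments (which in turn would demand higher-moment bounds on $(X^{\delta,h^{\delta}},Y^{\delta,h^{\delta}})$ beyond what Lemma \ref{ll3.1} provides) or, as I propose, route through Aldous's criterion so that the existing $L^2$ estimates suffice. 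Everything else is a mechanical application of Fubini, Cauchy-Schwarz, and Chebyshev.
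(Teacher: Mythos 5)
Your proof is correct in substance, and for the occupation measures and for part (ii) it coincides with the paper's argument: the paper likewise reduces everything to the single second-moment bound $\sup_{\delta}\mathbb{E}\int_{\mathbb{R}^d\times\mathcal{Y}\times[0,T]}(|h|^{2}+|y|^{2})\mathbf{P}^{\delta,\Delta}(dhdydt)<\infty$ (its (\ref{4.6})), phrased through the tightness function $\Psi$ and \cite[Theorem A.3.17]{m}, and then obtains the uniform integrability by the same $1/N$ Chebyshev step. Where you genuinely diverge is the tightness of the slow trajectories. The paper decomposes $X^{\delta,h^{\delta}}=x+\mathcal{R}_{1}^{\delta}+\mathcal{R}_{2}^{\delta}+\mathcal{R}_{3}^{\delta}$, proves \emph{fourth}-moment increment bounds for the drift and control integrals (each with a spare power of $|t_{1}-t_{2}|$, so a Kolmogorov-type modulus-of-continuity criterion applies), and disposes of the martingale part by showing $\mathcal{R}_{3}^{\delta}\to 0$ in probability. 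Your route via Aldous's criterion with only $L^{2}$ increments at stopping times is also valid — optional stopping and the Itô isometry handle the $\sqrt{\delta}$-martingale term, and since the processes are continuous the resulting tightness in $D([0,T];\mathbb{R}^{n})$ transfers to $C([0,T];\mathbb{R}^{n})$, a point you should state explicitly — and it is arguably more economical in moments. However, your stated reason for rejecting Kolmogorov is factually wrong: Lemma \ref{ll3.1} \emph{does} supply the needed higher moments, since (\ref{3.2}) holds for every $p\ge 1$ and the drift increment sees $Y^{\delta,h^{\delta}}$ only through $\int|Y_{s}^{\delta,h^{\delta}}|^{2}ds$, while the control increment is controlled by Cauchy--Schwarz, the boundedness of $\sigma$ and $\int_{0}^{T}|h_{s}^{\delta}|^{2}ds\le M$; it is only the crude aggregate estimate of Lemma \ref{l3.3} that lacks a spare power of $\Delta$, and the paper simply does not use that lemma for tightness. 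So either keep Aldous (adding the $D$-to-$C$ remark), or redo the increments at fourth order as the paper does; both succeed.
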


\begin{proof}\noindent$\textbf{Step 1.}~( \mathbf{Tightness}~\mathbf{of}~\{(X^{\delta,h^{\delta}},\mathcal{R}_1^\delta,\mathcal{R}_2^\delta)\}_{\delta>0}$) 
Following from the criterion of tightness (cf.~\cite[Theorem 7.3]{b}), due to the uniform moment estimate (\ref{3.1})  it suffices to prove that for any positive $\theta$, $\eta$, there exists a constant ${\delta}_{0}>0$ such that
\begin{equation}\label{4.2}
\sup\limits_{\delta\in(0,1)}\mathbb{P}\Big(\sup\limits_{t_1,t_2\in[0,T],|t_1-t_2|<\delta_0}|X_{t_1}^{\delta,h^\delta}-X_{t_2}^{\delta,h^\delta}|\geq\theta\Big)\leq\eta.
\end{equation}
Based on Lemma \ref{l3.3}, it is clear that $\{X_{t}^{\delta,h^\delta}\}_{\delta>0}$ satisfies (\ref{4.2}) by using the Kolmogorov's continuity criterion.

For the term $\mathcal{R}_1^\delta(t)$, for any $t_{1}, t_{2} \in [0,T]$ we have
\begin{align}\label{4.3}
	&\mathbb{E}|\mathcal{R}_{1}^{\delta}(t_{1})-\mathcal{R}_{1}^{\delta}(t_{2})|^{4}\nonumber \\
	\leq&\mathbb{E}\left(\int_{t_{2}}^{t_{1}}|b(X_s^{\delta,h^{\delta}},\mathscr{L}_{X_s^{\delta}},Y_s^{\delta,h^{\delta}},\mathscr{L}_{Y_s^{\delta}})|ds\right)^{4} \nonumber\\
	\leq& C\mathbb{E}\Bigg(\int_{t_{2}}^{t_{1}}\Big(1+|X_{s}^{\delta,h^{\delta}}|+\big(\mathbb{E}|X_{s}^{\delta}|^{2}\big)^\frac{1}{2}+|Y_{s}^{\delta,h^{\delta}}|+\big(\mathbb{E}|Y_{s}^{\delta}|^{2}\big)^\frac{1}{2}\Big)ds\Bigg)^4\nonumber\\
	\leq& C|t_{1}-t_{2}|^{2}\Bigg[|t_{1}-t_{2}|^{2}\Big(1+\mathbb{E}\int_{t_2}^{t_1}|X_{s}^{\delta,h^{\delta}}|^{4}ds\Big)
	\nonumber\\
	&+\Big(\int_{t_{2}}^{t_{1}}\mathbb{E}|X_{s}^{\delta}|^{2}ds\Big)^2+\mathbb{E}\Big(\int_{t_{2}}^{t_{1}}|Y_{s}^{\delta,h^{\delta}}|^{2}ds\Big)^2+\Big(\int_{t_{2}}^{t_{1}}\mathbb{E}|Y_{s}^{\delta}|^{2}ds\Big)^2\Bigg]\nonumber\\
	\leq& C_{T}|t_{1}-t_{2}|^{2},
\end{align}
where we used the estimates ({\ref{3.1}}), ({\ref{3.2}}), (\ref{e02}) and (\ref{e03}) in the last step. Note that (\ref{4.3}) implies (\ref{4.2}) with $\mathcal{R}_{1}^{\delta}$ replacing  $X^{\delta,h^{\delta}}$.

Similarly, for the term  $\mathcal{R}_2^\delta(t)$, due to $(\mathbf{A_2})$ and the fact that $h^{\delta}\in\mathcal{A}_{M},$ we have
\begin{align}\label{4.4}
	\mathbb{E}|\mathcal{R}_{2}^{\delta}(t_{1})-\mathcal{R}_{2}^{\delta}(t_{2})|^{4} \leq&\mathbb{E}\Bigg(\int_{t_{2}}^{t_{1}}\|\sigma\big(X_s^{\delta,h^{\delta}},\mathscr{L}_{X_s^{\delta}},Y_s^{\delta,h^{\delta}},\mathscr{L}_{Y_s^{\delta}}\big)\|\cdot|h_{s}^{1,\delta}|ds\Bigg)^{4} \nonumber \\
	\leq& C_{T}\mathbb{E}\Bigg(\int_{t_2}^{t_1}|h_s^{1,\delta}|ds\Bigg)^4 \nonumber \\
	\leq& C_{T}|t_{1}-t_{2}|^{2}\mathbb{E}\Bigg(\int_{0}^{T}|h_{s}^{1,\delta}|^{2}ds\Bigg)^{2} \nonumber \\
	\leq& C_{M,T}|t_{1}-t_{2}|^{2}.
	\end{align}
Therefore, following from (\ref{4.3}) and (\ref{4.4}), we can obtain the tightness of  $\{(\mathcal{R}_1^\delta,\mathcal{R}_2^\delta)\}_{\delta>0}$.

\vspace{1mm}
\noindent$\textbf{Step 2.}~(  \mathbf{Tightness}~\mathbf{of}~\{\mathbf{P}^{\delta,\Delta}\}_{\delta>0})$ We note that the function
\begin{equation}\nonumber
\Psi(\gamma):=\int_{\mathbb{R}^d\times\mathcal{Y}\times[0,T]}\left[|h|^2+|y|^2\right]\gamma(dhdydt), ~~\gamma\in\mathcal{P}(\mathbb{R}^d\times\mathcal{Y}\times[0,T])
\end{equation}
is a tightness function due to the fact that it is nonnegative and that the level set
\begin{equation}\nonumber
\mathcal{H}_k:=\left\{\gamma\in\mathcal{P}(\mathbb{R}^d\times\mathcal{Y}\times[0,T]):\Psi(\gamma)\leq k\right\}
\end{equation}
is relatively compact in $\mathcal{P}(\mathbb{R}^d\times\mathcal{Y}\times[0,T])$, for each $k < \infty$. Indeed,
using Chebyshev's inequality, we have
\begin{equation}\nonumber
\sup\limits_{\gamma\in\mathcal{H}_k}\gamma\Big(\Big\{(h,y,t)\in\mathcal{U}_N\times[0,T]\Big\}\Big)\leq\sup\limits_{\gamma\in\mathcal{H}_k}\frac{\Psi(\gamma)}{N^2}\leq\frac{k}{N^2}.
\end{equation}
Hence, $\mathcal{H}_k$ is tight and thus relatively compact as a subset of $\mathcal{P}(\mathbb{R}^d\times\mathcal{Y}\times[0,T])$.

Since $\Psi$ is a tightness function, by Theorem A.3.17 in \cite{m} the tightness of  $\mathbf{\{P}^{\delta,\Delta}\}_{\delta>0}$ holds if
$$\sup_{\delta>0}\mathbb{E}\big[\Psi(\mathbf{P}^{\delta,\Delta})\big]<\infty.$$ Indeed, by (\ref{e04}) we can get
\begin{align}\label{4.6}
\sup_{\delta>0}\mathbb{E}\big[\Psi(\mathbf{P}^{\delta,\Delta})\big] &=\operatorname*{sup}_{\delta>0}\mathbb{E}\int_{\mathbb{R}^d\times\mathcal{Y}\times[0,T]}\left[|h|^{2}+|y|^{2}\right]\mathbf{P}^{\delta,\Delta}(dhdydt)  \nonumber\\
&=\sup_{\delta>0}\mathbb{E}\int_{0}^{T}\frac{1}{\Delta}\int_{t}^{t+\Delta}\left[|h_{s}^{\delta}|^{2}+|Y_{s}^{\delta,h^{\delta}}|^{2}\right]dsdt \nonumber\\
&\leq C\sup_{\delta>0}\mathbb{E}\int_{0}^{T+\Delta}\Big[|h_{s}^{\delta}|^{2}+|Y_{s}^{\delta,h^{\delta}}|^{2}\Big]ds<\infty.
\end{align}

\noindent$\textbf{Step 3.}~( \mathbf{Uniform}~\mathbf{integrability}~\mathbf{of}~\{\mathbf{P}^{\delta,\Delta}\}_{\delta>0})$ This statement follows from the claim (ii) and the following inequality
\begin{align*}
&\mathbb{E}\int_{\mathcal{U}_N\times[0,T]}\big[|h|+|y|\big]\mathbf{P}^{\delta,\Delta}(dhdydt) \\
\leq&{\frac{C}{N}}\mathbb{E}\int_{\mathbb{R}^d\times\mathcal{Y}\times[0,T]}\big[|h|^{2}+|y|^{2}\big]\mathbf{P}^{\delta,\Delta}(dhdydt).
\end{align*}
The proof is complete.
\end{proof}

\section{Proof of LDP}
Building upon Proposition \ref{p4.1} and Prokhorov theorem, for any subsequence of $\{(X^{\delta,h^{\delta}},\mathcal{R}_1^\delta,\mathcal{R}_2^\delta,\mathbf{P}^{\delta,\Delta})\}_{\delta>0}$  there exists a subsubsequence still denoted by $(X^{\delta,h^{\delta}},\mathcal{R}_1^\delta,\mathcal{R}_2^\delta,\mathbf{P}^{\delta,\Delta})$ such that
$$(X^{\delta,h^\delta},\mathcal{R}_1^\delta,\mathcal{R}_2^\delta,\mathbf{P}^{\delta,\Delta})\Rightarrow(X,\mathcal{R}_1,\mathcal{R}_2,\mathbf{P})~~\text{in}~\mathcal{Z}_T,\quad\text{as}~\delta\to0,$$
where we denote by ``$\Rightarrow$" the weak convergence of random variables. Then applying the Skorokhod representation theorem, one can construct another probability space along with random variables denoted
by $((\tilde{\Omega},\tilde{\mathcal{F}},\tilde{\mathbb{P}}),\tilde{X}^{\delta,h^{\delta}},\tilde{\mathcal{R}}_1^\delta,\tilde{\mathcal{R}}_2^\delta,\tilde{\mathbf{P}}^{\delta,\Delta})$ such that
\begin{align}
&(i)~~(\tilde{X}^{\delta,h^\delta},\tilde{\mathcal{R}}_1^\delta,\tilde{\mathcal{R}}_2^\delta,\tilde{\mathbf{P}}^{\delta,\Delta})\to(\tilde{X},\tilde{\mathcal{R}}_1,\tilde{\mathcal{R}}_2,\tilde{\mathbf{P}})~~\text{in}~\mathcal{Z}_T~~\tilde{\mathbb{P}}\text{-a.s.},~\text{as}~\delta\to0,\label{4.7}
\\
&(ii)~~\mathscr{L}_{(X^{\delta,h^\delta},\mathcal{R}_1^\delta,\mathcal{R}_2^\delta,\mathbf{P}^{\delta,\Delta})}|_{\mathbb{P}}=\mathscr{L}_{(\tilde{X}^{\delta,h^\delta},\tilde{\mathcal{R}}_1^\delta,\tilde{\mathcal{R}}_2^\delta,\tilde{\mathbf{P}}^{\delta,\Delta})}|_{\tilde{\mathbb{P}}},~
\mathscr{L}_{(X,\mathcal{R}_1,\mathcal{R}_2,\mathbf{P})}|_{\mathbb{P}}=\mathscr{L}_{(\tilde{X},\tilde{\mathcal{R}}_1,\tilde{\mathcal{R}}_2,\tilde{\mathbf{P}})}|_{\tilde{\mathbb{P}}},\label{4.16}
\end{align}
where we denote by $\mathscr{L}_{X}|_{\mathbb{P}}$ the law of $X$ under the probability measure $\mathbb{P}$.   Hence, our next objective is to demonstrate that the accumulation point $(X,\mathbf{P})$ is a viable pair w.r.t $(\Phi,\nu,x,\bar{X})$ in the sense of Definition \ref{d2.4}, i.e. $(X,\mathbf{P})\in\mathcal{V}_{(\Phi,\nu,x,\bar{X})}.$

\subsection{Existence of viable pair}
First, by using Fatou's lemma and (\ref{4.6}), it follows that
\begin{align*}
&\mathbb{E}\int_{\mathbb{R}^d\times\mathcal{Y}\times[0,T]}\left[|h|^{2}+|y|^{2}\right]\mathbf{P}(dhdydt) \\
\leq&\liminf_{\delta\to0}\mathbb{E}\int_{\mathbb{R}^d\times\mathcal{Y}\times[0,T]}\left[|h|^{2}+|y|^{2}\right]\mathbf{P}^{\delta,\Delta}(dhdydt) \\
\leq&\sup_{\delta>0}\mathbb{E}\big[\Psi(\mathbf{P}^{\delta,\Delta})\big]<\infty,
\end{align*}
which yields that
$$
\int_{\mathbb{R}^d \times\mathcal{Y}\times[0,T]}\big[|h|^2+|y|^2\big]\mathbf{P}(dhdydt)<\infty,\mathbb{P}\text{-a.s..}
$$
Hence, the claim (i) in Definition \ref{d2.4} is satisfied. It remains to prove that the claims (\ref{2.16})-(\ref{2.17}) hold for $(X,\mathbf{P})$.

In the following, we first prove (\ref{2.18}), which will be used in proving (\ref{2.16}).
\vspace{1mm}

\noindent$\mathbf{Proof~of~(\ref{2.18}).}$ Recall the fact that $\mathbf{P}^{\delta,\Delta}(\mathbb{R}^d\times\mathcal{Y}\times[0,t])=t,$ and $\mathbf{P}(\mathbb{R}^d\times\mathcal{Y}\times{t})=0.$ Utilizing the continuity of $t \mapsto\mathbf{P}(\mathbb{R}^d\times\mathcal{Y}\times[0,t])$
to deal with null sets, it follows that (\ref{2.18}) holds.

From now on, we prove (\ref{2.16})-(\ref{2.17}).
\vspace{1mm}

\noindent$\mathbf{Proof~of~(\ref{2.16}).}$ Recall the equality (\ref{4.1})
\begin{align}\label{4.0}
	X_t^{\delta,h^{\delta}}=&~x+\sum_{i=1}^3\mathcal{R}_i^\delta(t),
\end{align}
where
\begin{align*}
&\mathcal{R}_1^\delta(t)=\int_{0}^{t}b(X_s^{\delta,h^{\delta}},\mathscr{L}_{X_s^{\delta}},Y_s^{\delta,h^{\delta}},\mathscr{L}_{Y_s^{\delta}})ds,
\nonumber\\
&\mathcal{R}_2^\delta(t)=\int_{0}^{t}\sigma(X_s^{\delta,h^{\delta}},\mathscr{L}_{X_s^{\delta}},Y_s^{\delta,h^{\delta}},\mathscr{L}_{Y_s^{\delta}})h_s^{1,\delta}ds, \nonumber\\
	&\mathcal{R}_3^\delta(t)=\sqrt{\delta}\int_{0}^{t}\sigma(X_s^{\delta,h^{\delta}},\mathscr{L}_{X_s^{\delta}},Y_s^{\delta,h^{\delta}},\mathscr{L}_{Y_s^{\delta}})dW_s^1.
\end{align*}

Our next objective is to show the convergence of the terms $\mathcal{R}_{i}^{\delta}(t),i=1,2,3.$ Particularly, in view of the condition (\ref{rf1}), we note that the term $\mathcal{R}_{3}^{\delta}(t)$ vanishes in probability in $C([0,T];\mathbb{R}^{n})$, as $\delta \to 0$. Therefore, it is sufficient to demonstrate the convergence of the remaining terms. To accomplish this, we will divide the proof into the following Lemmas $\ref{l4.1}$-$\ref{l4.2}$.

\begin{lemma}\label{l4.1}
Let $t\in[0,T]$.	The following limit is valid in distribution:
	\begin{equation*}\label{4.8}
		\mathcal{R}_{2}^{\delta}(t)\xrightarrow{\delta\to0}\int_{\mathbb{R}^d\times\mathcal{Y}\times[0,t]}\sigma(X_{s},\mathscr{L}_{\bar{X}_{s}},y,\nu)P_1h\mathbf{P}(dhdyds).
	\end{equation*}
\end{lemma}
\begin{proof}
First,  we have the following composition
\begin{align*}
&\int_{0}^{t}\sigma(X_{s}^{\delta,h^{s}},\mathscr{L}_{X_{s}^{\delta}},Y_{s}^{\delta,h^{\delta}},\mathscr{L}_{Y_{s}^{\delta}})h_{s}^{1,\delta}ds \\
=&\bigg(\int_{0}^{t}\sigma(X_{s}^{\delta,h^{\delta}},\mathscr{L}_{X_{s}^{\delta}},Y_{s}^{\delta,h^{\delta}},\mathscr{L}_{Y_{s}^{\delta}})h_{s}^{1,\delta}ds -\int_{0}^{t}\sigma(X_{s}^{\delta,h^{\delta}},\mathscr{L}_{X_{s}^{\delta}},Y_{s}^{\delta,h^{\delta}},\nu)h_{s}^{1,\delta}ds \bigg) \\
&+\bigg(\int_{0}^{t}\sigma(X_{s}^{\delta,h^{\delta}},\mathscr{L}_{X_{s}^{\delta}},Y_{s}^{\delta,h^{\delta}},\nu)h_{s}^{1,\delta}ds\\
&-\int_{\mathbb{R}^d\times\mathcal{Y}\times[0,t]}\sigma(X_s^{\delta,h^{\delta}},\mathscr{L}_{X_s^\delta},y,\nu)P_1h\mathbf{P}^{\delta,\Delta}(dhdyds)\bigg) \\
&+\int_{\mathbb{R}^d\times\mathcal{Y}\times[0,t]}\sigma(X_{s}^{\delta,h^{\delta}},\mathscr{L}_{X_{s}^{\delta}},y,\nu)P_1h\mathbf{P}^{\delta,\Delta}(dhdyds) \\
=:&\sum_{i=1}^{3}\mathcal{O}_{i}^{\delta}(t).
\end{align*}

\noindent$\textbf{Step 1.}$ In this part, we deal with terms $\mathcal{O}_i^\delta(t), i=1,2$.  By $(\mathbf{A_1})$, we observe that
\begin{align}\label{zz5.9}
&\mathbb{E}\Big[\sup_{t\in[0,T]}|\mathcal{O}_1^\delta(t)|^2\Big]
\nonumber\\
\leq&\mathbb{E}\Bigg(\int_{0}^{T}\|\sigma(X_{t}^{\delta,h^{\delta}},\mathscr{L}_{X_{t}^{\delta}},Y_{t}^{\delta,h^{\delta}},\mathscr{L}_{Y_{t}^{\delta}})-\sigma(X_{t}^{\delta,h^{\delta}},\mathscr{L}_{X_{t}^{\delta}},Y_{t}^{\delta,h^{\delta}},\nu)\|^{2}dt\cdot\int_{0}^{T}|h_{t}^{1,\delta}|^2dt\Bigg) \nonumber\\
\leq& C_{M,T}\int_{0}^{T}\mathbb{W}_2(\mathscr{L}_{Y_{t}^{\delta}},\nu)^2dt\nonumber\\
\leq& C_{M,T}\int_0^T\mathbb{W}_{2}(\mathscr{L}_{Y_\frac{t}{\varepsilon}^{y}},\nu)^2dt \to 0,~~\text{as}~\delta \to 0,
\end{align}
where ${Y_{t}^{y}}$ is the solution of (\ref{n2}), and the last step follows from \cite[Lemma 4.3]{vvv}.

Secondly, we estimate the term $\mathcal{O}_2^\delta(t)$. By (\ref{2.13}), we observe that
\begin{align*}
&\int_{\mathbb{R}^d\times\mathcal{Y}\times[0,t]}\sigma(X_s^{\delta,h^\delta},\mathscr{L}_{X_s^\delta},y,\nu)P_1h\mathbf{P}^{\delta,\Delta}(dhdyds) \\
=& \left(\int_{0}^{t}\frac{1}{\Delta}\int_{s}^{s+\Delta}\sigma(X_{s}^{\delta,h^{\delta}},\mathscr{L}_{X_{s}^{\delta}},Y_{r}^{\delta,h^{\delta}},\nu)h_{r}^{1,\delta}drds\right.  \\
&-\int_0^t\frac1\Delta\int_s^{s+\Delta}\sigma(X_r^{\delta,h^\delta},\mathscr{L}_{X_r^\delta},Y_r^{\delta,h^\delta},\nu)h_r^{1,\delta}drds\Bigg) \\
&+\int_0^t\frac1\Delta\int_s^{s+\Delta}\sigma(X_r^{\delta,h^\delta},\mathscr{L}_{X_r^\delta},Y_r^{\delta,h^\delta},\nu)h_r^{1,\delta}drds \\
=:&I_1^\delta(t)+I_2^\delta(t).
\end{align*}
As for $I_1^\delta(t)$, due to the Lipschitz continuity of $\sigma$ and (\ref{3.12}), it follows that
\begin{align}\label{4.9}
&\mathbb{E}\Big[\sup_{t\in[0,T]}|I_1^\delta(t)|^2\Big]
\nonumber\\
 \leq&\frac{1}{\Delta^{2}}\mathbb{E}\bigg(\int_{0}^{T}\int_{s}^{s+\Delta}\big(|X_{s}^{\delta,h^{\delta}}-X_{r}^{\delta,h^{\delta}}|^{2}+\mathbb{E}|X_{s}^{\delta}-X_{r}^{\delta}|^{2}\big)drds\cdot\int_{0}^{T}\int_{s}^{s+\Delta}|h_{r}^{1,\delta}|^{2}drds\bigg) \nonumber \\
\leq&\frac{C_{M,T}}{\Delta}\int_{0}^{T}\int_{s}^{s+\Delta}\left(\mathbb{E}|X_{s}^{\delta,h^{\delta}}-X_{r}^{\delta,h^{\delta}}|^{2}+\mathbb{E}|X_{s}^{\delta}-X_{r}^{\delta}|^{2}\right)drds \nonumber \nonumber \\
\leq&\frac{C_{M,T}}\Delta\int_0^T\int_s^{s+\Delta}(r-s)drds \nonumber\\
\leq& C_{M,T}\Delta\to0,~~\mathrm{as}~\delta\to0,
\end{align}
where the second step is due to the fact that for any $t \in [0, T],$
\begin{align*}
\int_{0}^{t}\int_{s}^{s+\Delta}|h_{r}^{1,\delta}|^{2}drds =&\int_{0}^{\Delta}\int_{0}^{r}|h_{r}^{1,\delta}|^{2}dsdr+\int_{\Delta}^{t}\int_{r-\Delta}^{r}|h_{r}^{1,\delta}|^{2}dsdr  \\
&+\int_t^{t+\Delta}\int_{r-\Delta}^t|h_r^{1,\delta}|^2dsdr \\
=&\int_{0}^{\Delta}|h_{r}^{1,\delta}|^{2}rdr+\Delta\int_{\Delta}^{t}|h_{r}^{1,\delta}|^{2}dr+\int_{t}^{t+\Delta}(t-r+\Delta)|h_{r}^{1,\delta}|^{2}dr \\
\leq&3\Delta\int_0^T|h_r^{1,\delta}|^2dr.
\end{align*}
On the other hand, the term $I_2^\delta(t)$ has the composition
\begin{align*}
I_{2}^{\delta}(t) =&\int_{0}^{\Delta}\frac{1}{\Delta}\int_{0}^{r}\sigma(X_{r}^{\delta,h^{\delta}},\mathscr{L}_{X_{r}^{\delta}},Y_{r}^{\delta,h^{\delta}},\nu)h_{r}^{1,\delta}dsdr  \\
&+\int_{\Delta}^{t}\frac{1}{\Delta}\int_{r-\Delta}^{r}\sigma(X_{r}^{\delta,h^{\delta}},\mathscr{L}_{X_{r}^{\delta}},Y_{r}^{\delta,h^{\delta}},\nu)h_{r}^{1,\delta}dsdr \\
&+\int_t^{t+\Delta}\frac{1}{\Delta}\int_{r-\Delta}^t\sigma(X_r^{\delta,h^\delta},\mathscr{L}_{X_r^\delta},Y_r^{\delta,h^\delta},\nu)h_r^{1,\delta}dsdr \\
=:&\sum_{i=1}^{3}I_{2i}^{\delta}(t).
\end{align*}
Due to the boundedness of $\sigma$$\sigma^*$ and $h_{r}^{1,\delta} \in \mathcal{A}_M$, it is easy to see that
\begin{equation}\label{4.10}
\mathbb{E}\Big[\sup_{t\in[0,T]}|I_{21}^{\delta}(t)|^2\Big]+\mathbb{E}\Big[\sup_{t\in[0,T]}|I_{23}^{\delta}(t)|^2\Big]\to0,~~\text{as}~\delta\to0.
\end{equation}
Thus, for the term $\mathcal{O}_2^\delta(t)$, it follows that
\begin{align*}
&\mathbb{E}\Big[\sup_{t\in[0,T]}|\mathcal{O}_{2}^{\delta}(t)|^{2}\Big]
\\
\leq& C\mathbb{E}\Big|\int_{0}^{\Delta}\sigma(X_{r}^{\delta,h^{\delta}},\mathscr{L}_{X_{r}^{\delta}},Y_{r}^{\delta,h^{\delta}},\nu)h_{r}^{1,\delta}dr\Big|^{2}  \\
&+C\mathbb{E}\Bigg[\sup_{t\in[0,T]}\left|\int_{\Delta}^{t}\sigma(X_{r}^{\delta,h},\mathscr{L}_{X_{r}^{\delta}},Y_{r}^{\delta,h^{\delta}},\nu)h_{r}^{1,\delta}dr-I_{22}^{\delta}(t)\right|^{2}\Bigg] \\
&+C\Bigg(\mathbb{E}\Big[\sup_{t\in[0,T]}|I_{1}^{\delta}(t)|^{2}\Big]+\mathbb{E}\Big[\sup_{t\in[0,T]}|I_{21}^{\delta}(t)|^{2}\Big]+\mathbb{E}\Big[\sup_{t\in[0,T]}|I_{23}^{\delta}(t)|^{2}\Big]\Bigg).
\end{align*}
Then by the boundedness of $\sigma$$\sigma^*$ and $h_{r}^{1,\delta} \in \mathcal{A}_M$, (\ref{4.9}), (\ref{4.10}) and the definition of $I_{22}^{\delta}(t)$, it leads to
\begin{equation}\label{4.11}
\mathbb{E}\Big[\sup_{t\in[0,T]}|\mathcal{O}_2^\delta(t)|^2\Big]\to0,\quad\text{as}~\delta\to0.
\end{equation}

\noindent$\textbf{Step 2.}$ Denote
\begin{equation*}
\mathcal{O}_3(t):=\int_{\mathbb{R}^d\times\mathcal{Y}\times[0,t]}\sigma(X_s,\mathscr{L}_{\bar{X}_s},y,\nu)P_1h\mathbf{P}(dhdyds).
\end{equation*}
We shall show that the following convergence holds 
	\begin{equation}\label{4.19}
		\mathcal{O}_3^\delta(t) \xrightarrow{\text{Law}} \mathcal{O}_3 (t).
	\end{equation}
Notice that for any $t\in[0,T]$ and bounded Lipschitz function $f$,  we have
\begin{align}\label{Skoro}
\mathbb{E}\big[f(\mathcal{O}_3^\delta(t))-f(\mathcal{O}_3(t))\big]=\tilde{\mathbb{E}}\big[f(\tilde{\mathcal{O}}_3^{\delta}(t))-f(\tilde{\mathcal{O}}_3(t))\big]\leq \tilde{\mathbb{E}}\big|\tilde{\mathcal{O}}_3^{\delta}(t)-\tilde{\mathcal{O}}_3(t)\big|,
\end{align}
where 
\begin{align*}
&\tilde{\mathcal{O}}_3^{\delta}(t):= \int_{\mathbb{R}^d\times\mathcal{Y}\times[0,t]}\sigma(\tilde{X}_{s}^{\delta,h^{\delta}},\mathscr{L}_{X_{s}^{\delta}},y,\nu)P_1h\tilde{\mathbf{P}}^{\delta,\Delta}(dhdyds), \nonumber\\
&\tilde{\mathcal{O}}_3(t):=\int_{\mathbb{R}^d\times\mathcal{Y}\times[0,t]}\sigma(\tilde{X}_s,\mathscr{L}_{\bar{X}_s},y,\nu)P_1h\tilde{\mathbf{P}}(dhdyds),
\end{align*}
and we used (\ref{4.16}) in the first step.

To show (\ref{4.19}), it suffices to prove that the right-hand side of (\ref{Skoro}) vanishes as $\delta\to 0$.
 As for the term $\tilde{\mathcal{O}}_3^{\delta}(t)-\tilde{\mathcal{O}}_3(t)$, we have the following composition
 \begin{align*}
 \tilde{\mathcal{O}}_3^{\delta}(t)-\tilde{\mathcal{O}}_3(t)=&\Bigg(\int_{\mathbb{R}^d\times\mathcal{Y}\times[0,t]}\sigma(\tilde{X}_{s}^{\delta,h^{\delta}},\mathscr{L}_{X_{s}^{\delta}},y,\nu)P_1h\tilde{\mathbf{P}}^{\delta,\Delta}(dhdyds) \\
 &-\int_{\mathbb{R}^d\times\mathcal{Y}\times[0,t]}\sigma(\tilde{X}_s,\mathscr{L}_{\bar{X}_s},y,\nu)P_1h\tilde{\mathbf{P}}^{\delta,\Delta}(dhdyds)\bigg) \\
 &+\Bigg(\int_{\mathbb{R}^d\times\mathcal{Y}\times[0,t]}\sigma(\tilde{X}_{s},\mathscr{L}_{\bar{X}_{s}},y,\nu)P_1h\tilde{\mathbf{P}}^{\delta,\Delta}(dhdyds) \\
 &-\int_{\mathbb{R}^d\times\mathcal{Y}\times[0,t]}\sigma(\tilde{X}_s,\mathscr{L}_{\bar{X}_s},y,\nu)P_1h\tilde{\mathbf{P}}(dhdyds)\bigg) \\
 =:&\sum_{i=1}^{2}\tilde{\mathcal{J}}_{i}^{\delta}(t).
 \end{align*}
Concerning the term $\tilde{\mathcal{J}}_{1}^{\delta}(t)$, we have the following estimates
\begin{align*}
&\tilde{\mathbb{E}}\Big[\sup_{t\in[0,T]}|\tilde{\mathcal{J}}_{1}^{\delta}(t)|\Big] \\
\leq&\tilde{\mathbb{E}}\Bigg[\operatorname*{sup}_{t\in[0,T]}\left|\int_{\mathbb{R}^d\times\mathcal{Y}\times[0,t]}\left(\sigma(\tilde{X}_s^{\delta,h^{\delta}},\mathscr{L}_{X_s^\delta},y,\nu)-\sigma\big(\tilde{X}_s,\mathscr{L}_{\bar{X}_{s}},y,\nu\big)\right)P_1h\tilde{\mathbf{P}}^{\delta,\Delta}(dhdyds)\right| \Bigg] \\
\leq&\tilde{\mathbb{E}}\Bigg(\int_{\mathbb{R}^d\times\mathcal{Y}\times[0,T]}\|\sigma(\tilde{X}_s^{\delta,h^{\delta}},\mathscr{L}_{X_s^\delta},y,\nu)-\sigma\big(\tilde{X}_s,\mathscr{L}_{\bar{X}_{s}},y,\nu\big)\|^{2}\tilde{\mathbf{P}}^{\delta,\Delta}(dhdyds) \\
&\cdot\int_{\mathbb{R}^d\times\mathcal{Y}\times[0,T]}|h|^2\tilde{\mathbf{P}}^{\delta,\Delta}(dhdyds)\Bigg)^{\frac{1}{2}} \\
\leq& C_{T}\bigg\{\tilde{\mathbb{E}}\Big[\sup_{t\in[0,T]}|\tilde{X}_{t}^{\delta,h^{\delta}}-\tilde{X}_{t}|^{2}\Big]+\sup_{t\in[0,T]}\mathbb{E}|X_{t}^{\delta}-\bar{X}_{t}|^{2}\bigg\}^{\frac{1}{2}}.
\end{align*}
Therefore, by the estimate (\ref{3.1}) and the convergence (\ref{z3.1}) and (\ref{4.7}) we can deduce
\begin{equation}\label{4.12}
\tilde{\mathbb{E}}\Big[\sup\limits_{t\in[0,T]}|\tilde{\mathcal{J}}_{1}^{\delta}(t)|\Big]\to0,\quad\mathrm{as}~\delta\to0.
\end{equation}

Now we turn to study the convergence of $\tilde{\mathcal{J}}_{2}^{\delta}(t)$. It's noteworthy that $\{\tilde{\mathbf{P}}^{\delta,\Delta}\}_{\delta>0}$ is $L^1$-uniformly integrable from Proposition \ref{p4.1} and (\ref{4.16}). Then, by (\ref{4.7}) it is clear that
\begin{equation*}
	\mathbb{W}_1(\tilde{\mathbf{P}}^{\delta,\Delta},\tilde{\mathbf{P}})\to 0\quad\tilde{\mathbb{P}}\text{-a.s.},~~\text{as}~\delta\to0.
\end{equation*}
Therefore, leveraging the assumption $(\mathbf{A_2})$, we deduce the following convergence
\begin{align}\label{4.13}
&\int_{\mathbb{R}^d\times\mathcal{Y}\times[0,T]}   \sigma(\tilde{X}_{s},\mathscr{L}_{\bar{X}_{s}},y,\nu)P_1h\tilde{\mathbf{P}}^{\delta,\Delta}(dhdyds)\nonumber\\
&\to\int_{\mathbb{R}^d\times\mathcal{Y}\times[0,T]}   \sigma(\tilde{X}_{s},\mathscr{L}_{\bar{X}_{s}},y,\nu)P_1h\tilde{\mathbf{P}}(dhdyds)\quad\tilde{\mathbb{P}}\text{-a.s.},~~\mathrm{as}~\delta\to0.
\end{align}
Combining (\ref{Skoro}), (\ref{4.12}) and  (\ref{4.13}), we can deduce that the convergence (\ref{4.19}) holds.

Following from  (\ref{zz5.9}), (\ref{4.11}) and (\ref{4.19}), we conclude  that Lemma \ref{l4.1} holds. We complete the proof.
\end{proof}

\begin{lemma}\label{l4.2}
Let $t\in[0,T]$.	The following limit is valid in distribution:
	\begin{align}\label{4.14}
		\mathcal{R}_{1}^{\delta}(t) \xrightarrow{\delta\to0} \int_{\mathbb{R}^d\times\mathcal{Y}\times[0,t]}\bar{b}(X_{s},\mathscr{L}_{\bar{X}_{s}})\mathbf{P}(dhdyds).
	\end{align}
\end{lemma}
\begin{proof}
Note that by (\ref{2.18}) it is clear that
\begin{align*}
\int_{\mathbb{R}^d\times\mathcal{Y}\times[0,t]}\bar{b}(X_{s},\mathscr{L}_{\bar{X}_{s}})\mathbf{P}(dhdyds)=\int_{0}^{t}\bar{b}(X_{s},\mathscr{L}_{\bar{X}_{s}})ds.
\end{align*}
Thus, it is sufficient to show that the following limit holds 
\begin{equation}\label{4.15}
	\mathcal{R}_{1}^{\delta}(t) \xrightarrow{\text{Law}}  \int_{0}^{t}\bar{b}(X_{s},\mathscr{L}_{\bar{X}_{s}})ds.
\end{equation}

The proof of (\ref{4.15}) is divided into the following two steps.

\vspace{1mm}
\noindent$\mathbf{Step~1.}$ Note that
\begin{align}\label{e05}
\mathcal{R}_{1}^{\delta}(t)=&\int_{0}^{t} b(X_{s}^{\delta,h^{\delta}},\mathscr{L}_{X_{s}^{\delta}},Y_{s}^{\delta,h^{\delta}},\mathscr{L}_{Y_{s}^{\delta}}) -b(X_{s(\Delta)}^{\delta,h^{\delta}},\mathscr{L}_{X_{s(\Delta)}^{\delta}},Y_{s}^{\delta},\nu)ds \nonumber\\
&+\int_{0}^{t} b(X_{s(\Delta)}^{\delta,h^{\delta}},\mathscr{L}_{X_{s(\Delta)}^{\delta}},Y_{s}^{\delta},\nu)-\bar{b}(X_{s(\Delta)}^{\delta,h^{\delta}},\mathscr{L}_{X_{s(\Delta)}^{\delta}})ds \nonumber\\
&+\int_{0}^{t} \bar{b}(X_{s(\Delta)}^{\delta,h^{\delta}},\mathscr{L}_{X_{s(\Delta)}^{\delta}})-\bar{b}(X_{s}^{\delta,h^{\delta}},\mathscr{L}_{X_{s}^{\delta}}) ds\nonumber\\
&+\int_{0}^{t}
\bar{b}(X_{s}^{\delta,h^{\delta}},\mathscr{L}_{X_{s}^{\delta}})ds\nonumber\\
=:& {I}_1(t) +{I}_2(t) +{I}_3(t) +{I}_4(t),
\end{align}
where $s(\Delta):=[\frac{s}{\Delta}]\Delta$ and $[s]$ denotes the integer part of $s$.

By the Lipschitz continuity of $b$ and $\bar{b}$ we have
\begin{align}\label{zz4.17}
&\mathbb{E}\Big[\sup_{t\in[0,T]}|{I}_1(t)+{I}_3(t)|^2\Big]
\nonumber\\
\leq& C\mathbb{E}\int_{0}^{T}|X_{t}^{\delta,h^{\delta}}-X_{t(\Delta)}^{\delta,h^{\delta}}|^{2}dt+C\int_{0}^{T}\mathbb{W}_{2}(\mathscr{L}_{X_{t}^{\delta}},\mathscr{L}_{X_{t(\Delta)}^{\delta}})^{2}dt \nonumber\\
&+C\mathbb{E}\int_0^T|Y_{t}^{\delta,h^{\delta}}-Y_{t}^{\delta}|^2dt+C\int_0^T\mathbb{W}_{2}(\mathscr{L}_{Y_{t}^{\delta}},\nu)^2dt \nonumber \\
\leq& C_{M,T}\big(1+|x|^2+|y|^2\big)\Big(\Delta+\frac{\varepsilon}{\delta}+\varepsilon^2\Big),
\end{align}
which the second step follows from the estimate (\ref{3.12}) and Lemmas 2.3, 3.6 and  4.3 in \cite{vvv}.

Similar to the proof of (\ref{4.19}), in view of  (\ref{z3.1}), (\ref{3.1}) and (\ref{4.7}) we also have
\begin{align*}
&\tilde{\mathbb{E}}\Big|\int_{0}^{t}\big(
\bar{b}(\tilde{X}_{s}^{\delta,h^{\delta}},\mathscr{L}_{X_{s}^{\delta}})- \bar{b}(\tilde{X}_{s},\mathscr{L}_{\bar{X}_{s}})\big)ds\Big|^2 \\
\leq& C\tilde{\mathbb{E}}\int_{0}^{t}|\tilde{X}_{s}^{\delta,h^{\delta}}-\tilde{X}_{s}|^{2}ds+C\int_{0}^{t}\mathbb{W}_{2}(\mathscr{L}_{X_{s}^{\delta}},\mathscr{L}_{\bar{X}_{s}})^{2}ds \nonumber\\
\to&~ 0, ~~\text{as} ~\delta \to 0,
\end{align*}
which implies 
the following limit holds 
\begin{equation}\label{e06}
	{I}_4(t) \xrightarrow{\text{Law}}\int_{0}^{t}\bar{b}(X_{s},\mathscr{L}_{\bar{X}_{s}})ds.
\end{equation}

Combining (\ref{e05})-(\ref{e06}), once we can prove
\begin{align}\label{e07}
\mathbb{E}\Big[\sup_{t\in[0,T]}|{I}_2(t)|^2\Big]\to 0&, ~~\text{as} ~\delta \to 0.
\end{align}
then (\ref{4.15}) holds.

\vspace{1mm}
\noindent$\mathbf{Step~2.}$ In this part, we prove (\ref{e07}). We note that
\begin{align*}
|{I}_{2}(t)|^{2} =&\Bigg|\sum_{k=0}^{[t/\Delta]-1}\int_{k\Delta}^{(k+1)\Delta}b(X_{s(\Delta)}^{\delta,h^{\delta}},\mathscr{L}_{X_{s(\Delta)}^{\delta}},Y_{s}^{\delta},\nu)-\bar{b}(X_{s(\Delta)}^{\delta,h^{\delta}},\mathscr{L}_{X_{s(\Delta)}^{\delta}})ds \nonumber \\
&+\int_{t(\Delta)}^{t}b(X_{s(\Delta)}^{\delta,h^{\delta}},\mathscr{L}_{X_{s(\Delta)}^{\delta}},Y_{s}^{\delta},\nu)-\bar{b}(X_{s(\Delta)}^{\delta,h^{\delta}},\mathscr{L}_{X_{s(\Delta)}^{\delta}})ds \Bigg|^2 \nonumber \\
\leq&\frac{C_{T}}{\Delta}\sum_{k=0}^{[t/\Delta]-1}\left|\int_{k\Delta}^{(k+1)\Delta}b(X_{s(\Delta)}^{\delta,h^{\delta}},\mathscr{L}_{X_{s(\Delta)}^{\delta}},Y_{s}^{\delta},\nu)-\bar{b}(X_{s(\Delta)}^{\delta,h^{\delta}},\mathscr{L}_{X_{s(\Delta)}^{\delta}})ds\right|^{2} \nonumber\\
&+2\left|\int_{t(\Delta)}^{t}b(X_{s(\Delta)}^{\delta,h^{\delta}},\mathscr{L}_{X_{s(\Delta)}^{\delta}},Y_{s}^{\delta},\nu)-\bar{b}(X_{s(\Delta)}^{\delta,h^{\delta}},\mathscr{L}_{X_{s(\Delta)}^{\delta}})ds\right|^{2} \nonumber\\
=:&{I}_{21}(t)+{I}_{22}(t).
\end{align*}
In view of term ${I}_{22}(t)$, it follows that
\begin{align*}
\mathbb{E}\Big[\sup_{t\in[0,T]}{I}_{22}(t)\Big] \leq& C\Delta\mathbb{E}\Bigg[\sup_{t\in[0,T]}\int_{t(\Delta)}^{t}\left|b(X_{s(\Delta)}^{\delta,h^{\delta}},\mathscr{L}_{X_{s(\Delta)}^{\delta}},Y_{s}^{\delta},\nu)\right|^{2}+\left|\bar{b}(X_{s(\Delta)}^{\delta,h^{\delta}},\mathscr{L}_{X_{s(\Delta)}^{\delta}})\right|^{2}ds \Bigg] \nonumber
\\
\leq& C\Delta\mathbb{E}\Bigg[\sup_{t\in[0,T]}\int_{t(\Delta)}^{t}\Big(1+|X_{s(\Delta)}^{\delta,h^{\delta}}|^{2}+\mathscr{L}_{X_{s(\Delta)}^{\delta}}(|\cdot|^{2})+|Y_{s}^{\delta}|^{2}+\nu(|\cdot|^{2})\Big)ds\Bigg] \nonumber\\
\leq &C_{M,T}\Delta(1+|x|^2+|y|^2).
\end{align*}

Now we focus on ${I}_{21} (t)$. For simplicity, we denote $b^{\nu} (x,\mu,y) := b(x,\mu,y,\nu)$. Then we rewrite ${I}_{21} (t)$ as
\begin{equation}
{I}_{21}(t)= \frac{C_{T}}{\Delta}\sum_{k=0}^{[t/\Delta]-1}\left|\int_{k\Delta}^{(k+1)\Delta}b^{\nu} (X_{s(\Delta)}^{\delta,h^{\delta}},\mathscr{L}_{X_{s(\Delta)}^{\delta}},Y_{s}^{\delta})-\bar{b}(X_{s(\Delta)}^{\delta,h^{\delta}},\mathscr{L}_{X_{s(\Delta)}^{\delta}})ds\right|^{2}. \nonumber
\end{equation}
Then it can be estimated by
\begin{align}\label{z4.21}
&\mathbb{E}\Big[\sup_{t\in[0,T]}{I}_{21}(t)\Big]
\nonumber\\
\leq& \frac{C_{T}}{\Delta}\mathbb{E}\sum_{k=0}^{[T/\Delta]-1}\left|\int_{k\Delta}^{(k+1)\Delta}b^{\nu} (X_{k\Delta}^{\delta,h^{\delta}},\mathscr{L}_{X_{k\Delta}^{\delta}},Y_{s}^{\delta})-\bar{b}(X_{k\Delta}^{\delta,h^{\delta}},\mathscr{L}_{X_{k\Delta}^{\delta}})ds\right|^{2} \nonumber\\
\leq&\frac{C_T}{\Delta^2}\max_{0\leq k\leq[T/\Delta]-1}\mathbb{E}\left|\int_{k\Delta}^{(k+1)\Delta}b^{\nu} (X_{k\Delta}^{\delta,h^{\delta}},\mathscr{L}_{X_{k\Delta}^{\delta}},Y_{s}^{\delta})-\bar{b}(X_{k\Delta}^{\delta,h^{\delta}},\mathscr{L}_{X_{k\Delta}^{\delta}})ds \right|^2 \nonumber\\
\leq &\frac{C_T\varepsilon^2}{\Delta^2}\max\limits_{0\leq k\leq[T/\Delta]-1}\mathbb{E}\left|\int_0^{\frac{\Delta}{\varepsilon}}b^{\nu} (X_{k\Delta}^{\delta,h^{\delta}},\mathscr{L}_{X_{k\Delta}^{\delta}},Y_{s\varepsilon+k\Delta}^\delta)-\bar{b}(X_{k\Delta}^{\delta,h^{\delta}},\mathscr{L}_{X_{k\Delta}^{\delta}})ds \right|^2 \nonumber\\
\leq&\frac{C_T\varepsilon^2}{\Delta^2}\max_{0\leq k\leq[T/\Delta]-1}\left[\int_0^{\frac{\Delta}{\varepsilon}}\int_r^{\frac{\Delta}{\varepsilon}}\Psi_k(s,r)dsdr\right],
\end{align}
where for any $0\leq r\leq s\leq\frac{\Delta}{\varepsilon},$
\begin{align*}
\Psi_{k}(s,r):=\mathbb{E}\Big[\langle b^{\nu} (X_{k\Delta}^{\delta,h^{\delta}},\mathscr{L}_{X_{k\Delta}^{\delta}},Y_{s\varepsilon+k\Delta}^\delta)-\bar{b}(X_{k\Delta}^{\delta,h^{\delta}},\mathscr{L}_{X_{k\Delta}^{\delta}}),\\
b^{\nu} (X_{k\Delta}^{\delta,h^{\delta}},\mathscr{L}_{X_{k\Delta}^{\delta}},Y_{r\varepsilon+k\Delta}^\delta)-\bar{b}(X_{k\Delta}^{\delta,h^{\delta}},\mathscr{L}_{X_{k\Delta}^{\delta}})\rangle\Big].
\end{align*}

For any $s\geq 0$ and any $\mathscr{F}_s$-measurable $\mathbb{R}^m$-valued random variable ${Y}$, we consider the following equation
\begin{equation}\label{n4.22}
	\left.\left\{\begin{aligned}
		&d\tilde{Y}_t=\frac{1}{\varepsilon}f(\tilde{Y}_t)dt+\frac{1}{\sqrt{\varepsilon}}g(\tilde{Y}_t)dW_t^2,~~~t\geq s,\\
		&\tilde{Y}_s=Y.
	\end{aligned}\right.\right.
\end{equation}
Then, by \cite[Theorem 4.2.4]{ll},  (\ref{n4.22}) has a unique solution denoted by $\tilde{Y}^{\delta,s,Y}_t$. By the definition of $Y^\delta _{t}$ in (\ref{n0}), for any $k\in \mathbb{N}$, it is clear that
\begin{equation*}
	Y_{t}^{\delta}=\tilde{Y}_{t}^{\delta,k\Delta,Y_{k\Delta}^{\delta}},~~~t\in[k\Delta,(k+1)\Delta].
\end{equation*}
Then we can obtain that
\begin{align}
\Psi_{k}(s,r):=\mathbb{E}\Big[\langle b^{\nu} (X_{k\Delta}^{\delta,h^{\delta}},\mathscr{L}_{X_{k\Delta}^{\delta}},\tilde{Y}_{s\varepsilon+k\Delta}^{\delta,k\Delta,Y_{k\Delta}^{\delta}})-\bar{b}(X_{k\Delta}^{\delta,h^{\delta}},\mathscr{L}_{X_{k\Delta}^{\delta}}),\nonumber\\
b^{\nu} (X_{k\Delta}^{\delta,h^{\delta}},\mathscr{L}_{X_{k\Delta}^{\delta}},\tilde{Y}_{r\varepsilon+k\Delta}^{\delta,k\Delta,Y_{k\Delta}^{\delta}})-\bar{b}(X_{k\Delta}^{\delta,h^{\delta}},\mathscr{L}_{X_{k\Delta}^{\delta}})\rangle\Big].\nonumber
\end{align}
It should be pointed out that for any fixed $y$, $\tilde{Y}_{s\varepsilon+k\Delta}^{\delta,k\Delta,y}$ is independent of $\mathscr{F}_{k\Delta},$ thus we get
\begin{align}
\Psi_{k}(s,r)=&\mathbb{E}\bigg\{\mathbb{E}\bigg[\big\langle b^{\nu} (X_{k\Delta}^{\delta,h^{\delta}},\mathscr{L}_{X_{k\Delta}^{\delta}},\tilde{Y}_{s\varepsilon+k\Delta}^{\delta,k\Delta,Y_{k\Delta}^{\delta}})-\bar{b}(X_{k\Delta}^{\delta,h^{\delta}},\mathscr{L}_{X_{k\Delta}^{\delta}}),\nonumber\\
&~~~~~~~~~b^{\nu} (X_{k\Delta}^{\delta,h^{\delta}},\mathscr{L}_{X_{k\Delta}^{\delta}},\tilde{Y}_{r\varepsilon+k\Delta}^{\delta,k\Delta,Y_{k\Delta}^{\delta}})-\bar{b}(X_{k\Delta}^{\delta,h^{\delta}},\mathscr{L}_{X_{k\Delta}^{\delta}})\big\rangle\big|\mathscr{F}_{k\Delta} \bigg]\bigg\}\nonumber\\
=&\mathbb{E}\bigg\{\mathbb{E}\bigg[\big\langle b^{\nu} (x,\mathscr{L}_{X_{k\Delta}^{\delta}},\tilde{Y}_{s\varepsilon+k\Delta}^{\delta,k\Delta,y})-\bar{b}(x,\mathscr{L}_{X_{k\Delta}^{\delta}}),\nonumber\\
&~~~~~~~~~b^{\nu} (x,\mathscr{L}_{X_{k\Delta}^{\delta}},\tilde{Y}_{r\varepsilon+k\Delta}^{\delta,k\Delta,y})-\bar{b}(x,\mathscr{L}_{X_{k\Delta}^{\delta}})\big\rangle\bigg]\bigg|_{(x,y)=(X_{k\Delta}^{\delta,h^{\delta}},Y_{k\Delta}^{\delta})} \bigg\}.\nonumber
\end{align}

Recall the definition of $\{\tilde{Y}_{s\varepsilon+k\Delta}^{\delta,k\Delta,y}\}_{s\geq0},$ by a time shift transformation it follows that
\begin{align}\label{z4.23}
\tilde{Y}_{s\varepsilon+k\Delta}^{\delta,k\Delta,y}=&y+\frac{1}{\varepsilon}\int_{k\Delta}^{s\varepsilon+k\Delta}f\big(\tilde{Y}_{r}^{\delta,k\Delta,y}\big)dr+\frac{1}{\sqrt{\varepsilon}}\int_{k\Delta}^{s\varepsilon+k\Delta}g\big(\tilde{Y}_{r}^{\delta,k\Delta,y}\big)dW_{r}^{2}\nonumber\\
=&y+\frac{1}{\varepsilon}\int_{0}^{s\varepsilon}f\big(\tilde{Y}_{r+k\Delta}^{\delta,k\Delta,y}\big)dr+\frac{1}{\sqrt{\varepsilon}}\int_{0}^{s\varepsilon}g\big(\tilde{Y}_{r+k\Delta}^{\delta,k\Delta,y}\big)dW_{r}^{2,k\Delta}\nonumber\\
=&y+\int_{0}^{s}f\big(\tilde{Y}_{r\varepsilon+k\Delta}^{\delta,k\Delta,y}\big)dr+\int_{0}^{s}g\big(\tilde{Y}_{r\varepsilon+k\Delta}^{\delta,k\Delta,y}\big)d\tilde{W}_{r}^{2,k\Delta},
\end{align}
where
\begin{equation}
\left\{W_r^{2,k\Delta}:=W_{r+k\Delta}^2-W_{k\Delta}^2\right\}_{r\ge0}\mathrm{~~and~~}\left\{\tilde{W}_r^{2,k\Delta}:=\frac{1}{\sqrt{\varepsilon}}W_{r\varepsilon}^{2,k\Delta}\right\}_{r\ge0}.\nonumber
\end{equation}

We recall that the process  $Y^y _s$ satisfies Eq.~(\ref{n2}), i.e.,
\begin{equation}\label{z4.24}
Y_s^y=y+\int_0^sf(Y_r^y)dr+\int_0^sg(Y_r^y)d\tilde{W}_r^2.
\end{equation}
Then, the uniqueness of solutions of (\ref{z4.23}) and (\ref{z4.24}) implies that $\{\tilde{Y}_{s\varepsilon+k\Delta}^{\delta,k\Delta,y}\}_{0\leq s \leq \frac{\Delta}{\varepsilon}}$ coincides in law with $\{Y_s^y\}_{0\leq s\leq\frac{\Delta}{\varepsilon}}$. Thus, using Markov and time-homogeneous properties of process $Y_s^y$ we have
\begin{align}
\Psi_{k}(s,r)=&\mathbb{E}\Big\{\mathbb{E}\Big[\langle b^{\nu} (x,\mathscr{L}_{X_{k\Delta}^{\delta}},Y_s^y)-\bar{b}(x,\mathscr{L}_{X_{k\Delta}^{\delta}}),\nonumber\\
&~~~~~~~~~b^{\nu} (x,\mathscr{L}_{X_{k\Delta}^{\delta}},Y_r^y)-\bar{b}(x,\mathscr{L}_{X_{k\Delta}^{\delta}})\rangle\Big]\Big|_{(x,y)=(X_{k\Delta}^{\delta,h^{\delta}},Y_{k\Delta}^{\delta})} \Big\}\nonumber\\
=&\mathbb{E}\left\{\mathbb{E}\Big[\langle\mathbb{E}\Big[\Big(b^{\nu} (x,\mathscr{L}_{X_{k\Delta}^{\delta}},Y_s^y)-\bar{b}(x,\mathscr{L}_{X_{k\Delta}^{\delta}})\Big)\Big|\mathscr{F}_{r}\right], \nonumber\\
&~~~~~~~~~b^{\nu} (x,\mathscr{L}_{X_{k\Delta}^{\delta}},Y_r^y)-\bar{b}(x,\mathscr{L}_{X_{k\Delta}^{\delta}})\rangle\Big]\Big|_{(x,y)=(X_{k\Delta}^{\delta,h^{\delta}},Y_{k\Delta}^{\delta})}\Big\} \nonumber\\
=&\mathbb{E}\left\{\mathbb{E}\Big[\langle\mathbb{E}\Big[b^{\nu} (x,\mathscr{L}_{X_{k\Delta}^{\delta}},Y_{s-r}^z)-\bar{b}(x,\mathscr{L}_{X_{k\Delta}^{\delta}})\right]\Big|_{\{z=Y_r^y\}},\nonumber \\
&~~~~~~~~~b^{\nu} (x,\mathscr{L}_{X_{k\Delta}^{\delta}},Y_r^y)-\bar{b}(x,\mathscr{L}_{X_{k\Delta}^{\delta}})\rangle\Big]\Big|_{(x,y)=(X_{k\Delta}^{\delta,h^{\delta}},Y_{k\Delta}^{\delta})}\Big\}. \nonumber
\end{align}
Therefore, according to Lemmas 4.2 and 4.4 in \cite{vvv} and Lemma 3.1 in \cite{cc}, we  get
\begin{align}\label{z4.25}
\Psi_{k}(s,r)\leq& C_{T}e^{-\frac{(s-r)\beta}{2}} \mathbb{E}\Big[1+|Y_r^{Y_{k\Delta}^{\delta}}|^2\Big] \nonumber\\
\leq&  C_{T}e^{-\frac{(s-r)\beta}{2}} \mathbb{E}\Big[1+|Y_{k\Delta}^{\delta}|^2\Big] \nonumber\\
\leq & C_{T} (1+|y|^2)e^{-\frac{(s-r)\beta}{2}},
\end{align}
where $\beta \in (0,\kappa)$.

Now, by inserting (\ref{z4.25}) into (\ref{z4.21}) we obtain
\begin{align*}
&\mathbb{E}\Big[\sup_{t\in[0,T]}{I}_{21}(t)\Big]\nonumber\\
\leq& C_T (1+|y|^2) \frac{\varepsilon^2}{\Delta^2}\left[\int_0^{\frac{\Delta}{\varepsilon}}\int_r^{\frac{\Delta}{\varepsilon}}e^{-\frac{(s-r)\beta}{2}}dsdr\right]\nonumber\\
\leq& C_T (1+|y|^2) \frac{\varepsilon^2}{\Delta^2}(\frac{2\Delta}{\beta \varepsilon}-\frac{4}{\beta ^2}+\frac{4}{\beta ^2}e^{-\frac{\beta \Delta}{2\varepsilon}})\nonumber\\
\leq& C_T (1+|y|^2) (\frac{\varepsilon^2}{\Delta^2} + \frac{\varepsilon}{\Delta} ).
\end{align*}
Consequently, in view of the assumption (\ref{2.11}), we can conclude that (\ref{e07}) holds.
\end{proof}

According to  the convergence (\ref{4.7}),  the following equality holds
\begin{align*}
	\tilde{X}_t &= x+\tilde{\mathcal{R}}_1(t)+\tilde{\mathcal{R}}_2(t)\quad\tilde{\mathbb{P}}\text{-a.s.}.
\end{align*}
Then following from (\ref{4.16}) and Lemmas \ref{l4.1}-\ref{l4.2}, we indicate that the limit $(X,\mathbf{P})$ of sequence
$\{(X^{\delta,h^{\delta}},\mathbf{P}^{\delta,\Delta})\}_{\delta>0}$ satisfies the following integral equation
\begin{align*}
	X_t &= x+\int_{\mathbb{R}^d\times\mathcal{Y}\times[0,t]}\bar{b}(X_s,\mathscr{L}_{\bar{X}_{s}})+\sigma(X_s,\mathscr{L}_{\bar{X}_{s}},y,\nu)P_1h\mathbf{P}(dhdyds)\\
	&=x+\int_{\mathbb{R}^d\times\mathcal Y\times[0,t]}\Phi(X_{s},\mathscr{L}_{\bar{X}_{s}},y,\nu,h)\mathbf{P}(dhdyds)\quad\mathbb{P}\text{-a.s.}.
\end{align*}  
 Therefore, $(X,\mathbf{P})$ satisfies (\ref{2.16}) in Definition \ref{d2.4}.

\vspace{1mm}

\noindent$\mathbf{Proof~of~(\ref{2.17}).}$ In this part, we aim to demonstrate that the second and third marginals of $\mathbf{P}$ are represented by the product of the invariant measure $\nu$ and the Lebesgue measure. This will be showed in Lemma \ref{l4.5}. Before doing that, we require several preliminary lemmas, namely Lemma \ref{l4.3} and \ref{l4.4} below.

Recall the controlled fast process
\begin{align*}
dY_t^{\delta,h^{\delta}}=&\frac{1}{\varepsilon}f(Y_t^{\delta,h^{\delta}})dt+\frac{1}{\sqrt{\delta\varepsilon}}g(Y_t^{\delta,h^{\delta}})h_t^{2,\delta}dt+\frac{1}{\sqrt{\varepsilon}}g(Y_t^{\delta, h^{\delta}})dW_t^2,~~~Y_0^{\delta,h^{\delta}}=y.
\end{align*}
Meanwhile, we also recall the uncontrolled fast process
\begin{equation}
d{Y}_t^{\delta}=\frac{1}{\varepsilon}f({Y}_t^{\delta})dt+\frac{1}{\sqrt{\varepsilon}}g({Y}_t^{\delta})dW_t^2,~~~{Y}_0^{\delta}=y.\nonumber
\end{equation}

In the following lemma, we show that the processes $Y_t^{\delta,h^{\delta}}$ and ${Y}_t^{\delta}$ are close in $\mathit{L}^2$-sense.

\begin{lemma}\label{l4.3}
	Let $M>0$, $\{h^{\delta}\}_{\delta>0}\subset\mathcal{A}_{M}$, $\Delta$ as in Definition (\ref{2.11}). Then
	\begin{equation}\label{4.17}
		\dfrac{1}{\Delta}\mathbb{E}\int_0^T|Y_t^{\delta,h^\delta}-{Y}_t^\delta|^2dt\to0,~~\text{as}~\delta\to0.
	\end{equation}
\end{lemma}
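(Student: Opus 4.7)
The plan is to set $Z_t := Y_t^{\delta,h^\delta} - Y_t^\delta$ (which starts at $0$) and derive an $L^2$ decay estimate using the dissipativity in $(\mathbf{A_1})$. Subtracting the two defining SDEs gives
\begin{align*}
dZ_t = \frac{1}{\varepsilon}\big(f(Y_t^{\delta,h^\delta})-f(Y_t^\delta)\big)dt + \frac{1}{\sqrt{\delta\varepsilon}}g(Y_t^{\delta,h^\delta})h_t^{2,\delta}dt + \frac{1}{\sqrt{\varepsilon}}\big(g(Y_t^{\delta,h^\delta})-g(Y_t^\delta)\big)dW_t^2.
\end{align*}

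First, I would apply It\^o's formula to $|Z_t|^2$. The drift-plus-quadratic-variation contribution from $f$ and the $g$-difference is absorbed by the dissipativity condition (\ref{a111}), which (after multiplying through by $1/\varepsilon$ and noting the weights $2$ and $3$ versus $2$ and $1$ leave a spare non-negative term) gives
\begin{align*}
\frac{2}{\varepsilon}\langle f(Y_t^{\delta,h^\delta})-f(Y_t^\delta),Z_t\rangle + \frac{1}{\varepsilon}\|g(Y_t^{\delta,h^\delta})-g(Y_t^\delta)\|^2 \leq -\frac{\kappa}{\varepsilon}|Z_t|^2.
\end{align*}
The extra control cross-term I would handle by Young's inequality, exploiting the boundedness of $g$ from $(\mathbf{A_2})$:
\begin{align*}
\frac{2}{\sqrt{\delta\varepsilon}}\langle g(Y_t^{\delta,h^\delta})h_t^{2,\delta},Z_t\rangle \leq \frac{\kappa}{2\varepsilon}|Z_t|^2 + \frac{C}{\delta}|h_t^{2,\delta}|^2.
\end{align*}
Taking expectations, which kills the stochastic integral, produces the ODE inequality
\begin{align*}
\frac{d}{dt}\mathbb{E}|Z_t|^2 \leq -\frac{\kappa}{2\varepsilon}\mathbb{E}|Z_t|^2 + \frac{C}{\delta}\mathbb{E}|h_t^{2,\delta}|^2.
\end{align*}

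Then Gronwall's inequality with $Z_0=0$ yields
\begin{align*}
\mathbb{E}|Z_t|^2 \leq \frac{C}{\delta}\int_0^t e^{-\kappa(t-s)/(2\varepsilon)}\mathbb{E}|h_s^{2,\delta}|^2 ds,
\end{align*}
and integrating over $[0,T]$, swapping the order of integration and bounding the inner time integral by $2\varepsilon/\kappa$ give, together with $h^\delta\in\mathcal{A}_M$,
\begin{align*}
\mathbb{E}\int_0^T|Z_t|^2dt \leq \frac{C\varepsilon}{\delta}\,\mathbb{E}\int_0^T|h_s^{2,\delta}|^2 ds \leq \frac{C_M\varepsilon}{\delta}.
\end{align*}
Dividing by $\Delta$ and invoking the scale assumption (\ref{2.11}), namely $\varepsilon/(\delta\Delta)\to 0$, concludes (\ref{4.17}).

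The only delicate point is the weighting in Young's inequality: the factor $1/\sqrt{\delta\varepsilon}$ in the control drift must be split so that the $|Z_t|^2$ piece carries weight $1/\varepsilon$ (to be absorbed by the dissipation from (\ref{a111})), while the $|h_t^{2,\delta}|^2$ piece carries only weight $1/\delta$. This is exactly what produces a bound of order $\varepsilon/\delta$ after the exponential is integrated, and hence matches the scaling hypothesis (\ref{2.11}); a symmetric splitting would give only $O(1/\delta)$ and be useless.
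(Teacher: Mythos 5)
Your proposal is correct and follows essentially the same route as the paper: It\^o's formula for $|Y_t^{\delta,h^\delta}-Y_t^\delta|^2$, absorption of the $f$- and $g$-difference terms via the dissipativity condition (\ref{a111}), a Young splitting of the control cross-term with the $|Z_t|^2$ piece weighted by a fraction of $\kappa/\varepsilon$ and the $|h_t^{2,\delta}|^2$ piece by $C/\delta$, followed by Gronwall and the scale assumption $\varepsilon/(\delta\Delta)\to 0$. The only cosmetic difference is that the paper retains a rate $\beta=\kappa-\epsilon_0$ rather than your $\kappa/2$.
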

\begin{proof}
	Denote $\zeta _{t} := Y_t^{\delta,h^{\delta}}-{Y}_t^{\delta}$. Using It\^{o}'s
	formula for $|\zeta _{t}|^2$ and then taking expectation, we derive that
	\begin{align*}
		\frac{d}{dt}\mathbb{E}|\zeta_{t}|^{2}=& \frac{2}{\varepsilon}\mathbb{E}\Big[\langle f(Y_{t}^{\delta,h^{\delta}})-f({Y}_{t}^{\delta}),\zeta_{t}\rangle\Big] +\frac{1}{\varepsilon}\mathbb{E}\|g(Y_{t}^{\delta,h^{\delta}})-g({Y}_{t}^{\delta})\|^{2} \\
		&+\frac{2}{\sqrt{\delta\varepsilon}}\mathbb{E}\Big[\langle g(Y_{t}^{\delta,h^{\delta}})h_{t}^{2,\delta},\zeta_{t}\rangle\Big]\\
		=& \sum_{i=1}^3\mathcal{J}_i.
	\end{align*}
	Due to the condition (\ref{a111}), we can deduce that
	\begin{equation}\nonumber
		\mathcal{J}_1+\mathcal{J}_2\leq-\frac{\kappa}{\varepsilon}\mathbb{E}|\zeta_t|^2.
	\end{equation}
	Moreover, we have
	\begin{equation}\nonumber
		\mathcal{J}_3\leq\dfrac{C}{\sqrt{\delta\varepsilon}}\mathbb{E}\Big[|h_t^{2,\delta}||\zeta_t|\Big]\leq\dfrac{C}{\delta}\mathbb{E}|h_t^{2,\delta}|^2+\dfrac{\epsilon_0}{\varepsilon}\mathbb{E}|\zeta_t|^2,
	\end{equation}
	where $\epsilon_{0}\in(0,\kappa).$
	
	Then we have
	\begin{equation}\nonumber
		\frac{d}{dt}\mathbb{E}|\zeta_t|^2\leq-\frac{\beta}{\varepsilon}\mathbb{E}|\zeta_t|^2+\frac{C}{\delta}\mathbb{E}|h_t^{2,\delta}|^2,
	\end{equation}
	where $\beta:=\kappa-\epsilon_{0}>0.$ The Gronwall's lemma implies that
	\begin{equation}\nonumber
		\mathbb{E}|\zeta_t|^2\leq\dfrac{C}{\delta}\int_0^te^{-\frac{\beta}{\varepsilon}(t-s)}\mathbb{E}|h_s^{2,\delta}|^2ds.
	\end{equation}
	Thus, for $h^\delta\in\mathcal{A}_M,$ we have
	\begin{equation}\nonumber
		\mathbb{E}\int_0^T|\zeta_t|^2dt\leq\frac{C}{\delta}\mathbb{E}\Big[\int_0^T\int_0^te^{-\frac{\beta}{\varepsilon}(t-s)}|h_s^{2,\delta}|^2dsdt\Big]\leq\frac{C_{M,T}\varepsilon}{\delta}.
	\end{equation}
	Note that $\frac{\varepsilon}{\delta \Delta} \to 0$~as~$\delta \to 0$, we complete the proof of (\ref{4.17}).
\end{proof}

Correspondingly, for $s \geq t$ we can establish the two parameter process $Y (s;t)$ which solves
\begin{align*}
	dY(s;t)=\frac{1}{\varepsilon}f(Y(s;t))ds+\frac{1}{\sqrt{\varepsilon}}g(Y (s;t))dW_s^2,~~
	Y (t;t)={Y}_t^{\delta}.\nonumber
\end{align*}
As demonstrated by the forthcoming lemma, the process ${Y}_s^{\delta}$ is close to the process $Y (s;t)$ in $\mathit{L}^2$-sense on the interval $s \in [t,t+\Delta]$.

\begin{lemma}\label{l4.4}
Let $M>0$, $\{h^{\delta}\}_{\delta>0}\subset\mathcal{A}_{M}$, $\Delta$ as in Definition (\ref{2.11}). Then
\begin{equation}\label{4.18}
\dfrac{1}{\Delta}\mathbb{E}\int_t^{t+\Delta}|{Y}_s^\delta-Y (s;t)|^2ds\to0,~~\text{as}~\delta\to0.
\end{equation}
\end{lemma}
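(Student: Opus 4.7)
The plan is to differentiate the $L^2$-distance $\mathbb{E}|Y_s^{\delta}-Y(s;t)|^2$ via It\^o's formula, exploit the dissipativity condition $(\ref{a111})$ to produce an exponential contraction at the fast rate $\kappa/\varepsilon$, and then average over $s\in[t,t+\Delta]$.

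First, I would set $\xi_s:=Y_s^{\delta}-Y(s;t)$ for $s\in[t,t+\Delta]$. Since both processes satisfy the uncontrolled fast SDE driven by $W^2$ (differing only in their value at the left endpoint $s=t$), applying It\^o's formula to $|\xi_s|^2$ gives
\begin{equation*}
d|\xi_s|^2=\frac{2}{\varepsilon}\langle f(Y_s^{\delta})-f(Y(s;t)),\xi_s\rangle\,ds+\frac{1}{\varepsilon}\|g(Y_s^{\delta})-g(Y(s;t))\|^2\,ds+d\mathcal{M}_s,
\end{equation*}
where $\mathcal{M}_s$ is a local martingale. Taking expectations and invoking the dissipativity $(\ref{a111})$ yields
\begin{equation*}
\frac{d}{ds}\mathbb{E}|\xi_s|^2\leq-\frac{\kappa}{\varepsilon}\mathbb{E}|\xi_s|^2,
\end{equation*}
so Gronwall's lemma gives $\mathbb{E}|\xi_s|^2\leq e^{-\kappa(s-t)/\varepsilon}\mathbb{E}|\xi_t|^2$ on $[t,t+\Delta]$.

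Next, integrating over $s\in[t,t+\Delta]$ and dividing by $\Delta$ produces
\begin{equation*}
\frac{1}{\Delta}\mathbb{E}\int_t^{t+\Delta}|\xi_s|^2\,ds\leq\frac{\varepsilon}{\kappa\Delta}\mathbb{E}|\xi_t|^2.
\end{equation*}
The second-moment estimate of Lemma \ref{ll3.1} together with its analogue for $Y(\cdot;t)$ bounds $\mathbb{E}|\xi_t|^2$ uniformly in $\delta$, while the scaling hypothesis $(\ref{2.11})$ combined with $\delta\to 0$ gives $\varepsilon/\Delta=\delta\cdot\bigl(\varepsilon/(\delta\Delta)\bigr)\to 0$. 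Hence the right-hand side vanishes, establishing $(\ref{4.18})$.

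The main obstacle is that the exponential contraction rate $\kappa/\varepsilon$ and the averaging factor $1/\Delta$ pull in opposite directions on the short interval $[t,t+\Delta]$: without the contraction one can only control the integrand by its uniform moment, which is $O(1)$ rather than $o(1)$. What rescues the bound is precisely that $\delta\to 0$ upgrades the standing scaling $\varepsilon/(\delta\Delta)\to 0$ to $\varepsilon/\Delta\to 0$. In the degenerate situation where $Y(t;t)=Y_t^{\delta}$ coincide pathwise and the same $W^2$ drives $Y(\cdot;t)$, strong uniqueness even forces $\xi_s\equiv 0$ and the conclusion is immediate.
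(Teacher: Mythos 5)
Your argument is correct and is essentially the one the paper intends: the authors omit the proof with the remark that it follows the argument of Lemma \ref{l4.3}, which is exactly the It\^o-plus-dissipativity-plus-Gronwall contraction you carry out (here with no control term, so the estimate closes even more easily). Your closing observation is also apt: as the paper literally defines $Y(\cdot;t)$ with $Y(t;t)=Y_t^{\delta}$ and the same driving noise $W^2$, pathwise uniqueness gives $Y(s;t)=Y_s^{\delta}$ on $[t,t+\Delta]$, so the claim is in fact immediate.
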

\begin{proof}
The proof is omitted since one can follow the similar argument as in Lemma \ref{l4.3} to obtain (\ref{4.18}).
\end{proof}

We are now in the position to show that (\ref{2.17}) holds.
\begin{lemma}\label{l4.5}
$\mathbf{P}$ has the decomposition (\ref{2.17}), i.e., for any $\phi \in C_b(\mathcal{Y}),$
\begin{equation*}
\int_{\mathbb{R}^d\times\mathcal{Y}\times[0,T]}\phi(y)\mathbf{P}(dhdydt)=\int_{0}^{T}\int_{\mathcal{Y}}\phi(y)\nu(dy)dt.
\end{equation*}
\end{lemma}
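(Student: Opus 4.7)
The plan is to pass to the limit $\delta\to 0$ in the a.s.\ convergence $\mathbf{P}^{\delta,\Delta}\to\tilde{\mathbf{P}}$ applied to the bounded continuous test function $(h,y,t)\mapsto\phi(y)$, and then identify the limit through three successive approximations, each eliminating one layer of dependence (on the control, on the slow variable, and on the finite averaging window). Using the definition \eqref{2.12} of $\mathbf{P}^{\delta,\Delta}$ and the uniform integrability from Proposition~\ref{p4.1}, the left-hand side equals
\[
\lim_{\delta\to0}\mathbb{E}\int_{0}^{T}\frac{1}{\Delta}\int_{t}^{t+\Delta}\phi(Y_{s}^{\delta,h^{\delta}})\,ds\,dt.
\]
Since $\phi\in C_b(\mathcal{Y})$, a standard truncation and Lemma~\ref{l4.3} allow replacing $Y_{s}^{\delta,h^{\delta}}$ by the uncontrolled fast process $Y_{s}^{\delta}$ up to an error that tends to $0$ with $\delta$. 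Similarly, Lemma~\ref{l4.4} allows replacing $Y_{s}^{\delta}$ on the slice $s\in[t,t+\Delta]$ by the frozen-initial-condition process $Y(s;t)$ starting from $Y_{t}^{\delta}$ at time $t$.

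The crucial step is then to establish
\[
\mathbb{E}\int_{0}^{T}\Big|\frac{1}{\Delta}\int_{t}^{t+\Delta}\phi(Y(s;t))\,ds-\int_{\mathcal{Y}}\phi(y)\,\nu(dy)\Big|\,dt\to0,\quad\delta\to0.
\]
For this I rescale time by $s=t+\varepsilon u$; by uniqueness of strong solutions of \eqref{n2}, the law of $\{Y(t+\varepsilon u;t)\}_{u\geq 0}$ conditional on $\mathscr{F}_{t}$ coincides with that of the unaccelerated process $\{Y_{u}^{z}\}_{u\geq 0}|_{z=Y_{t}^{\delta}}$. Thus
\[
\frac{1}{\Delta}\int_{t}^{t+\Delta}\phi(Y(s;t))\,ds=\frac{\varepsilon}{\Delta}\int_{0}^{\Delta/\varepsilon}\phi(Y_{u}^{Y_{t}^{\delta}})\,du.
\]
The time-scale assumption $\varepsilon/(\delta\Delta)\to 0$ together with $\varepsilon/\delta\to 0$ forces $\Delta/\varepsilon\to\infty$, so the inner integral is an ergodic average over a diverging time horizon. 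By the exponential ergodicity of $Y^{y}$ (guaranteed by the dissipativity \eqref{a111} and the boundedness of $g$, cf.~\cite[Theorem 4.3.9]{ll}), together with the uniform $L^{2}$ bound on $Y_{t}^{\delta}$ (cf.~\eqref{e03}), one has
\[
\sup_{y\in\mathbb{R}^{m}}\frac{1}{1+|y|^{2}}\Big|\mathbb{E}\phi(Y_{u}^{y})-\int_{\mathcal{Y}}\phi\,d\nu\Big|\le Ce^{-\lambda u},
\]
which after Cesàro-averaging and integration in $t$ yields the claim.

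The main obstacle is precisely this ergodic averaging on random initial data $Y_{t}^{\delta}$: one has to combine exponential mixing of the fast semigroup with uniform-in-$\delta$ moment control on the starting point and show that the mixing rate dominates the rate at which $\Delta/\varepsilon$ grows. Once this is done, chaining together the three approximations gives
\[
\int_{\mathbb{R}^{d}\times\mathcal{Y}\times[0,T]}\phi(y)\,\tilde{\mathbf{P}}(dh\,dy\,dt)=T\int_{\mathcal{Y}}\phi(y)\,\nu(dy)=\int_{0}^{T}\!\!\int_{\mathcal{Y}}\phi(y)\,\nu(dy)\,dt,
\]
which, combined with the already-established property \eqref{2.18} identifying the last marginal as Lebesgue measure, gives the product structure \eqref{2.17} with $\eta(dh\mid y,t)$ obtained by disintegration.
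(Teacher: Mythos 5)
Your proposal is correct and follows essentially the same route as the paper: the same chain of approximations (remove the control via Lemma \ref{l4.3}, freeze the initial condition via Lemma \ref{l4.4}, rescale time and invoke ergodicity of the fast process), combined with the a.s.\ convergence $\mathbf{P}^{\delta,\Delta}\to\tilde{\mathbf{P}}$ and the marginal identity \eqref{2.18}. The only cosmetic difference is in the last step, where the paper applies the pathwise ergodic theorem plus dominated convergence while you use a quantitative exponential-mixing bound with Ces\`aro averaging; both are valid.
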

\begin{proof}
Without loss of generality, we assume that $\phi$ is Lipschitz continuous. Note that we have the following decomposition
\begin{align*}
&\int_{\mathbb{R}^d\times\mathcal Y\times[0,T]}\phi(y)\mathbf{P}^{\delta,\Delta}(dhdydt)\\
=& \int_0^T\frac{1}{\Delta}\int_t^{t+\Delta}\phi(Y_s^{\delta,h^\delta})dsdt\\
=&\Bigg(\int_0^T\frac{1}{\Delta}\int_t^{t+\Delta}\phi(Y_s^{\delta,h^\delta})dsdt-\int_0^T\frac{1}{\Delta}\int_t^{t+\Delta}\phi({Y}_s^\delta)dsdt\Bigg) \\
&+\Bigg(\int_{0}^{T}\frac{1}{\Delta}\int_{t}^{t+\Delta}\phi({Y}_{s}^{\delta})dsdt-\int_{0}^{T}\frac{1}{\Delta}\int_{t}^{t+\Delta}\phi(Y (s;t))dsdt\Bigg) \\
&+\Bigg(\int_{0}^{T}\frac{1}{\Delta}\int_{t}^{t+\Delta}\phi(Y(s;t))dsdt-\int_{0}^{T}\int_{\mathcal{Y}}\phi(y)\nu(dy)dt\Bigg) \\
&+\int_{0}^{T}\int_{\mathcal{Y}}\phi(y)\nu(dy)dt\\
=:&\sum_{i=1}^4 \mathcal{O}_i^\delta.
\end{align*}

Our aim is to demonstrate that the terms $\mathcal{O}_{i}^{\delta}, i=1,\ldots,3,$  converge to zero in probability as $\delta \to 0$. In light of Lemma \ref{l4.3} and the dominated convergence theorem, we infer that $\mathcal{O}_{1}^{\delta}$ also tends to zero in probability as $\delta \to 0$.  Correspondingly, Lemma \ref{l4.4} and the dominated convergence theorem indicate that $\mathcal{O}_{2}^{\delta}$ tends to zero in probability.

Now, let's address the term  $\mathcal{O}_{3}^{\delta}$. We introduce the time-rescaled process  $\tilde Y_{s}:=Y (t+\varepsilon s;t)$ given by
\begin{equation}\nonumber
d\tilde Y_{s}=f(\tilde Y_{s})ds+g(\tilde Y_{s})dW_{s}^{2},~~\tilde Y_{0}={Y}_{t}^{\delta},\quad0\leq s\leq\frac{\Delta}{\varepsilon}.
\end{equation}
We notice that
\begin{equation}\nonumber
\dfrac{1}{\Delta}\int_t^{t+\Delta}\phi(Y(s;t))ds=\dfrac{\varepsilon}{\Delta}\int_0^{\frac{\Delta}{\varepsilon}}\phi(\tilde Y_s)ds.
\end{equation}
Thus, by the strong dissipation assumption (\ref{a111}) and the scale condition (\ref{2.11}) and making use of the classical Birkhoff ergodic theorem (cf.~\cite{B31} and see also \cite[Theorem 1.1]{D23}), we can obtain 
\begin{equation}\nonumber
\lim_{\delta\to0}\frac{\varepsilon}{\Delta}\int_{0}^{\frac{\Delta}{\varepsilon}}\phi(\tilde Y_{s})ds=\int_{\mathcal{Y}}\phi(y)\nu(dy),
\end{equation}
which together with the dominated convergence theorem implies  that  $\mathcal{O}_{3}^{\delta}$ tends to zero in probability.

Consequently, we deduce that as $\delta\rightarrow 0$, we have the following convergence
$$\int_{\mathbb{R}^d\times\mathcal Y\times[0,T]}\phi(y)\mathbf{P}^{\delta,\Delta}(dhdydt) \xrightarrow{\mathbb{P}} \int_0^T\int_{\mathcal{Y}}\phi(y)\nu(dy)dt.$$
On the other hand, since $\mathbf{P}^{\delta,\Delta} \Rightarrow \mathbf{P}$ weakly as $\delta \rightarrow 0$, the continuous mapping theorem yields that
$$\int_{\mathbb{R}^d\times\mathcal Y\times[0,T]}\phi(y)\mathbf{P}^{\delta,\Delta}(dhdydt) \xrightarrow{\text{Law}} \int_{\mathbb{R}^d\times\mathcal Y\times[0,T]}\phi(y)\mathbf{P} (dhdydt).$$
By the uniqueness of the limit, we conclude
$$\int_{\mathbb{R}^d\times\mathcal Y\times[0,T]}\phi(y)\mathbf{P} (dhdydt)=\int_0^T\int_{\mathcal{Y}}\phi(y)\nu(dy)dt,\quad \mathbb{P}\text{-a.s.}$$
Then, we finish the proof of (\ref{2.17}).
\end{proof}

\subsection{Lower bound of Laplace principle }
In this subsection, we  establish the lower bound of the Laplace principle. Specifically, we aim to demonstrate that for all bounded and continuous functions $\Lambda:C([0,T];\mathbb{R}^{n})\to\mathbb{R}$,
\begin{align}
&\liminf_{\delta\to0}\Bigg(-{\delta}\log\mathbb{E}\left[\exp\left\{-\frac{1}{\delta}\Lambda(X^{\delta})\right\}\right]\Bigg) \nonumber\\
\geq&\operatorname*{inf}_{(\varphi,\mathbf{P})\in\mathcal{V}_{(\Phi,\nu,x, \bar{X} )}}\left[{\frac{1}{2}}\int_{\mathbb{R}^d\times\mathcal{Y}\times[0,T]}|h|^{2}\mathbf{P}(dhdydt)+\Lambda(\varphi)\right].\nonumber
\end{align}
We only need to  prove the lower limit along any subsequence for which
\begin{equation*}
-{\delta}\log\mathbb{E}\left[\exp\left\{-\frac{1}{\delta}\Lambda(X^{\delta})\right\}\right],
\end{equation*}
converges. Note that such a subsequence  exists since
\begin{align*}
\left|-{\delta}\log\mathbb{E}\left[\exp\left\{-\frac{1}{\delta}\Lambda(X^{\delta})\right\}\right]\right|\leq C\|\Lambda\|_\infty.
\end{align*}
In light of  \cite[Theorem 3.17]{f}, for any $\eta >0$, there exists $M>0$ such that for any $\delta>0$, there exists $h^{\delta}\in\mathcal{A}_{M},$
we get
\begin{equation*}
-{\delta}\log\mathbb{E}\left[\exp\left\{-\frac{1}{\delta}\Lambda(X^\delta)\right\}\right]\geq\mathbb{E}\left[\frac{1}{2}\int_0^T|h_s^{\delta}|^2ds+\Lambda(X^{\delta,h^\delta})\right]-\eta.
\end{equation*}

Therefore, if we utilize this control $h^{\delta}$ and the associated controlled process $X^{\delta,h^\delta}$  to construct occupation measures $\mathbf{P}^{\delta,\Delta}$,  according to Proposition \ref{p4.1}, for any sequence in $\{\delta\}$ there exists a subsequence still denoted by $\{\delta\}$ such that
\begin{equation*}
(X^{\delta,h^\delta},\mathbf{P}^{\delta,\Delta})\Rightarrow(X,\mathbf{P}),
\end{equation*}
with $(X,\mathbf{P})\in\mathcal{V}_{(\Phi,\nu,x, \bar{X} )}.$ Using Fatou's lemma, it follows that
\begin{align*}
&\liminf_{\delta\to0}\Bigg(-\delta\log\mathbb{E}\left[\exp\left\{-\frac{1}{\delta}\Lambda(X^{\delta})\right\}\right]\Bigg) \\
\geq&\liminf_{\delta\to0}\mathbb{E}\left[\frac{1}{2}\int_{0}^{T}|h_{s}^{\delta}|^{2}ds+\Lambda(X^{\delta,h^{\delta}})\right]-\eta  \\
\geq&\liminf_{\delta\to0}\mathbb{E}\left[\frac{1}{2}\int_{0}^{T}\frac{1}{\Delta}\int_{t}^{t+\Delta}|h_{s}^{\delta}|^{2}dsdt+\Lambda(X^{\delta,h^{\delta}})\right]-\eta  \\
=&\liminf_{\delta\to0}\mathbb{E}\left[\frac{1}{2}\int_{\mathbb{R}^d\times\mathcal{Y}\times[0,T]}|h|^{2}\mathbf{P}^{\delta,\Delta}(dhdydt)+\Lambda(X^{\delta,h^{\delta}})\right]-\eta  \\
\geq&\mathbb{E}\left[\frac{1}{2}\int_{\mathbb{R}^d\times\mathcal{Y}\times[0,T]}|h|^{2}\mathbf{P}(dhdydt)+\Lambda(X)\right]-\eta  \\
\geq&\inf_{(\varphi,\mathbf{P})\in\mathcal{V}_{(\Phi,\nu,x, \bar{X} )}}\left[\frac{1}{2}\int_{\mathbb{R}^d\times\mathcal{Y}\times[0,T]}|h|^{2}\mathbf{P}(dhdydt)+\Lambda(\varphi)\right]-\eta  \\
\geq&\operatorname*{inf}_{\varphi\in C([0,T];\mathbb{R}^{n})}\left[I(\varphi)+\Lambda(\varphi)\right]-\eta.
\end{align*}
Since $\eta>0$ is arbitrary, the lower bound is proved. \hspace{\fill}$\Box$

\subsection{Compactness of level sets of $I(\cdot)$}
In this part,
our objective is to establish that for each $s<\infty$, the level set
\begin{equation*}
\Gamma(s):=\Big\{\varphi\in C([0,T];\mathbb{R}^n):I(\varphi)\leq s\Big\}
\end{equation*}
is a compact subset in $C([0,T];\mathbb{R}^n)$. More precisely,  we show the pre-compactness of $\Gamma(s)$ in Lemma $\ref{l4.6}$, and  demonstrate that it is closed in Lemma $\ref{l4.8}$. Then, we have the desired result.

\begin{lemma}\label{l4.6}
Fix $K < \infty $ and consider any sequence $\{(\varphi^{k},\mathbf{P}^{k})\}_{k\in\mathbb{N}}\subset\mathcal{V}_{(\Phi,\nu,x, \bar{X} )}$ such that for any $k\in \mathbb{N}$, $(\varphi^{k},\mathbf{P}^{k})$ is viable and
\begin{equation}\label{z4.31}
\int_{\mathbb{R}^d\times\mathcal{Y}\times[0,T]}\big(|h|^2+|y|^2\big)\mathbf{P}^k(dhdydt)\leq K.
\end{equation}
Then, $\{(\varphi^{k},\mathbf{P}^{k})\}_{k\in\mathbb{N}}$ is pre-compact.
\end{lemma}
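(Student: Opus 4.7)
The goal is to produce a limit point $(\varphi, \mathbf{P})$ of $\{(\varphi^k, \mathbf{P}^k)\}$ in $C([0,T];\mathbb{R}^n) \times \mathcal{P}(\mathbb{R}^d \times \mathcal{Y} \times [0,T])$. The plan is to handle the measure component and the trajectory component separately: use the uniform second moment bound (\ref{z4.31}) together with Prokhorov's theorem to extract a weakly convergent subsequence of $\{\mathbf{P}^k\}$, and use the viable pair identity (\ref{2.16}) together with Arzel\`a--Ascoli to extract a uniformly convergent subsequence of $\{\varphi^k\}$.

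For the measures, I would reuse the device from Proposition \ref{p4.1}(ii): the functional
\[
\Psi(\gamma) := \int_{\mathbb{R}^d \times \mathcal{Y} \times [0,T]} \bigl[|h|^2 + |y|^2\bigr]\,\gamma(dh\,dy\,dt)
\]
is a tightness function on $\mathcal{P}(\mathbb{R}^d \times \mathcal{Y} \times [0,T])$, since Chebyshev's inequality shows that its sublevel sets are tight. The hypothesis (\ref{z4.31}) gives $\Psi(\mathbf{P}^k) \leq K$ for every $k$, so by Theorem A.3.17 of \cite{m} (or Prokhorov directly) the family $\{\mathbf{P}^k\}$ is relatively compact in $\mathcal{P}(\mathbb{R}^d \times \mathcal{Y} \times [0,T])$.

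For the trajectories, I would first derive a uniform bound on $\sup_{t\in[0,T]}|\varphi^k_t|$ and then an equicontinuity estimate, both from the viable pair identity $\varphi^k_t = x + \int_{\mathbb{R}^d \times \mathcal{Y} \times [0,t]} \Phi(\varphi^k_s,\mathscr{L}_{\bar{X}_s},y,\nu,h)\,\mathbf{P}^k(dh\,dy\,ds)$. Using the linear growth of $\bar{b}$ in $x$ (a consequence of $(\mathbf{A_1})$), the uniform boundedness of $\sigma$ from $(\mathbf{A_2})$, property (\ref{2.18}) that the last marginal of $\mathbf{P}^k$ is Lebesgue, Cauchy--Schwarz applied to $\int |\sigma P_1 h|\,\mathbf{P}^k \leq C \bigl(\int |h|^2 \mathbf{P}^k\bigr)^{1/2} T^{1/2} \leq C\sqrt{KT}$, and Gronwall's lemma, one gets a bound $\sup_k \sup_{t \in [0,T]} |\varphi^k_t| \leq C_{T,K,x}$. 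Then for $0 \leq s \leq t \leq T$, splitting $\Phi$ into its drift and diffusion parts and applying Cauchy--Schwarz to the second integral yields
\[
|\varphi^k_t - \varphi^k_s| \leq C_{T,K}(t-s) + C\,(t-s)^{1/2}\!\left(\int_{\mathbb{R}^d \times \mathcal{Y} \times [s,t]}\!\! |h|^2\,\mathbf{P}^k(dh\,dy\,dr)\right)^{\!1/2} \!\!\!\leq C_{T,K}\,(t-s)^{1/2},
\]
so $\{\varphi^k\}$ is uniformly $1/2$-H\"older continuous and uniformly bounded. Arzel\`a--Ascoli then gives a subsequence converging uniformly to some $\varphi \in C([0,T];\mathbb{R}^n)$.

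Combining the two extractions along a common subsequence delivers the pre-compactness claim. The only mildly delicate point I expect is carefully bounding $\sup_t|\varphi^k_t|$ before one can invoke the H\"older estimate; once that is in place, the rest is a direct application of the tightness function machinery and Arzel\`a--Ascoli. I note that verifying the limit $(\varphi,\mathbf{P})$ itself lies in $\mathcal{V}_{(\Phi,\nu)}$ is not required here and will be addressed in the closedness step (Lemma \ref{l4.8}).
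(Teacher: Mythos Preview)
Your proposal is correct and follows essentially the same approach as the paper: the tightness function $\Psi$ from Proposition~\ref{p4.1} for the measure component, and a $1/2$-H\"older estimate combined with Arzel\`a--Ascoli for the trajectory component. You are in fact slightly more careful than the paper's own proof, which obtains the H\"older bound directly via Cauchy--Schwarz but glosses over the need for a uniform-in-$k$ bound on $\sup_t|\varphi^k_t|$ to control $|\bar b(\varphi^k_s,\mathscr{L}_{\bar X_s})|^2$; your explicit Gronwall step fills exactly that gap.
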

\begin{proof}
For any $0\leq t_{1}<t_{2}\leq T$ and  $k\in \mathbb{N}$,
\begin{align*}
|\varphi_{t_2}^k-\varphi_{t_1}^k| =&\left|\int_{\mathbb{R}^d\times\mathcal{Y}\times[t_{1},t_{2}]}\Phi(\varphi_{s}^{k},\mathscr{L}_{\bar{X}_{s}},y,\nu,h)\mathbf{P}^{k}(dhdyds)\right|  \\
\leq& C(t_2-t_1)^{\frac{1}{2}}\Bigg(\int_{\mathbb{R}^d\times\mathcal{Y}\times[t_1,t_2]}|\bar{b}(\varphi_s^k,\mathscr{L}_{\bar{X}_s})+\sigma(\varphi_s^k,\mathscr{L}_{\bar{X}_s},y,\nu)h^1|^2\mathbf{P}^k(dhdyds)\Bigg)^{\frac{1}{2}}\\
\leq& C(t_2-t_1)^{\frac{1}{2}}.
\end{align*}
This combined with the fact that  $\varphi_0^k = x$ gives the pre-compactness of $\{\varphi^{k}\}_{k\in\mathbb{N}}$  by Arzel\`{a}-Ascoli theorem.

The pre-compactness of $\{\mathbf{P}^{k}\}_{k\in\mathbb{N}}$ is inferred from  (\ref{z4.31}) by employing the same argument as in Proposition \ref{p4.1}.
\end{proof}

To demonstrate that the level set  $\Gamma(s)$ is closed, we introduce a crucial lemma stating that the limit of any sequence of viable pairs is also viable.
\begin{lemma}\label{l4.7}
Fix $K < \infty $ and consider any sequence $\{(\varphi^{k},\mathbf{P}^{k})\}_{k\in\mathbb{N}}$ such that for any $k\in \mathbb{N}$, $(\varphi^{k},\mathbf{P}^{k})$ is viable and
\begin{equation}\label{z4.32}
	\int_{\mathbb{R}^d\times\mathcal{Y}\times[0,T]}\big(|h|^2+|y|^2\big)\mathbf{P}^k(dhdydt)\leq K.
\end{equation}
Then, the limit $(\varphi,\mathbf{P})$ is a viable pair.
\end{lemma}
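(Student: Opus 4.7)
By Lemma \ref{l4.6} the sequence $\{(\varphi^{k},\mathbf{P}^{k})\}_{k\in\mathbb{N}}$ is pre-compact, so after passing to a (non-relabeled) subsequence we may assume $\varphi^{k}\to\varphi$ uniformly in $C([0,T];\mathbb{R}^{n})$ and $\mathbf{P}^{k}\Rightarrow\mathbf{P}$ in $\mathcal{P}(\mathbb{R}^{d}\times\mathcal{Y}\times[0,T])$. The plan is to verify the three conditions of Definition \ref{d2.4} for the limit $(\varphi,\mathbf{P})$.

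\textit{Moment bound and decomposition.} Since $(h,y,t)\mapsto|h|^{2}+|y|^{2}$ is nonnegative and lower semicontinuous, the Portmanteau theorem together with the uniform bound \eqref{z4.32} yields
\begin{equation*}
\int_{\mathbb{R}^{d}\times\mathcal{Y}\times[0,T]}\bigl(|h|^{2}+|y|^{2}\bigr)\mathbf{P}(dhdydt)\leq\liminf_{k\to\infty}\int_{\mathbb{R}^{d}\times\mathcal{Y}\times[0,T]}\bigl(|h|^{2}+|y|^{2}\bigr)\mathbf{P}^{k}(dhdydt)\leq K,
\end{equation*}
giving (i). For (iii), each $\mathbf{P}^{k}$ has marginal $\nu(dy)\,dt$ on $\mathcal{Y}\times[0,T]$; testing against arbitrary bounded continuous functions of $(y,t)$ shows this property is preserved in the weak limit, so $\mathbf{P}$ has the same marginal. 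A disintegration on the Polish space $\mathbb{R}^{d}\times\mathcal{Y}\times[0,T]$ then produces a stochastic kernel $\eta(dh\mid y,t)$ such that $\mathbf{P}(dhdydt)=\eta(dh\mid y,t)\,\nu(dy)\,dt$, which is exactly \eqref{2.17} and implies \eqref{2.18}.

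\textit{Trajectory equation.} Writing $\Phi(\varphi_{s},\mathscr{L}_{\bar{X}_{s}},y,\nu,h)=\bar{b}(\varphi_{s},\mathscr{L}_{\bar{X}_{s}})+\sigma(\varphi_{s},\mathscr{L}_{\bar{X}_{s}},y,\nu)P_{1}h$, I split the passage to the limit in \eqref{2.16} into two pieces. The drift piece reduces to $\int_{0}^{t}\bar{b}(\varphi_{s}^{k},\mathscr{L}_{\bar{X}_{s}})\,ds$ by \eqref{2.18}, and the Lipschitz property of $\bar{b}$ inherited from $(\mathbf{A_{1}})$ together with the uniform convergence $\varphi^{k}\to\varphi$ gives convergence to $\int_{0}^{t}\bar{b}(\varphi_{s},\mathscr{L}_{\bar{X}_{s}})\,ds$. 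For the diffusion piece I first replace $\sigma(\varphi_{s}^{k},\mathscr{L}_{\bar{X}_{s}},y,\nu)$ by $\sigma(\varphi_{s},\mathscr{L}_{\bar{X}_{s}},y,\nu)$: the error is bounded by $C\sup_{s\le T}|\varphi_{s}^{k}-\varphi_{s}|\cdot(\int|h|\,\mathbf{P}^{k}(dhdyds))$, and Cauchy--Schwarz together with \eqref{z4.32} controls the $h$-integral by $\sqrt{KT}$, so the error vanishes. It then remains to pass to the limit in $\int\sigma(\varphi_{s},\mathscr{L}_{\bar{X}_{s}},y,\nu)P_{1}h\,\mathbf{P}^{k}(dhdyds)$, where the integrand is continuous in $(y,h)$ but unbounded in $h$.

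\textit{Main obstacle.} The key technical point is this last passage to the limit, since the integrand $\sigma(\varphi_{s},\mathscr{L}_{\bar{X}_{s}},y,\nu)P_{1}h$ is not bounded on $\mathbb{R}^{d}\times\mathcal{Y}\times[0,T]$. I would handle it by a standard truncation: for $N>0$ set $h_{N}:=h\,\mathbf{1}_{\{|h|\le N\}}$, observe that $(y,h)\mapsto\sigma(\varphi_{s},\mathscr{L}_{\bar{X}_{s}},y,\nu)P_{1}h_{N}$ is bounded and continuous (using continuity of $\sigma$ in $y$ and boundedness from $(\mathbf{A_{2}})$), so weak convergence $\mathbf{P}^{k}\Rightarrow\mathbf{P}$ handles the truncated integral. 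The tail is estimated uniformly in $k$ by
\begin{equation*}
\Bigl|\int\sigma\,P_{1}(h-h_{N})\,\mathbf{P}^{k}(dhdyds)\Bigr|\le\|\sigma\|_{\infty}\int_{|h|>N}|h|\,\mathbf{P}^{k}(dhdyds)\le\frac{\|\sigma\|_{\infty}K}{N},
\end{equation*}
by Cauchy--Schwarz and \eqref{z4.32}, and the analogous bound holds for $\mathbf{P}$ using the already-established moment bound on $\mathbf{P}$. Letting first $k\to\infty$ and then $N\to\infty$ closes the argument and yields \eqref{2.16} for $(\varphi,\mathbf{P})$, completing the verification that $(\varphi,\mathbf{P})\in\mathcal{V}_{(\Phi,\nu)}$.
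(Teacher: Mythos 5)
Your proposal is correct and follows essentially the same route as the paper's (much terser) proof: Fatou/Portmanteau for the moment bound, preservation of the $\nu(dy)\,dt$ marginal plus disintegration for \eqref{2.17}, and the affine-in-$h$ structure of $\Phi$ together with uniform integrability of $\{\mathbf{P}^k\}$ for \eqref{2.16}; you simply spell out the uniform-integrability step as an explicit truncation with tail bound $K/N$, which is exactly what the paper is invoking when it refers back to Proposition \ref{p4.1}. One small imprecision: the sharp cutoff $h\mathbf{1}_{\{|h|\leq N\}}$ makes the integrand bounded but \emph{not} continuous on the sphere $\{|h|=N\}$, so to invoke weak convergence you should either use a continuous cutoff function or restrict to the (co-countable) set of $N$ with $\mathbf{P}(\{|h|=N\})=0$; with that one-line fix the argument is complete.
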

\begin{proof}
Since $(\varphi^{k},\mathbf{P}^{k})$ is viable, we know that
\begin{equation*}
\varphi_{t}^k =x+\int_{\mathbb{R}^d\times\mathcal{Y}\times[0,t]}\Phi(\varphi_{s}^{k},\mathscr{L}_{\bar{X}_{s}},y,\nu,h)\mathbf{P}^{k}(dhdyds),
\end{equation*}
for every $t \in [0,T]$, and
\begin{equation*}
\mathbf{P}^k(dhdydt)=\eta^k(dh|y,t)\nu(dy)dt,
\end{equation*}
where $\eta^k$ is a sequence of stochastic kernels.

First, by applying Fatou's lemma, it is easy to show that $\mathbf{P}$ has a finite second moment, as required by condition (i) in Definition \ref{d2.4}. Additionally, note that the function $\Phi(\varphi,\mu,y,\nu,h)$ is continuous w.r.t.~$\varphi,\mu,y,\nu$ and affine in $h$. Furthermore, the uniform integrability of $\mathbf{P}^k$ can be demonstrated analogously to the argument presented in Proposition \ref{p4.1}. Therefore, using assumption (\ref{z4.32}) and considering the convergences $\mathbf{P}^k \to \mathbf{P}$ and $\varphi^k \to \varphi$, we conclude that $(\varphi,\mathbf{P})$ satisfies (\ref{2.16}). For the same reasons, it is evident that $\mathbf{P}$ satisfies (\ref{2.17}). This completes the proof.
\end{proof}

\begin{lemma}\label{l4.8}
The functional $I(\varphi)$ is lower semicontinuous.
\end{lemma}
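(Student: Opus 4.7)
The plan is to apply the two preceding lemmas (pre-compactness of level sets of the moment functional and stability of viability under limits) to a near-optimal sequence of viable pairs. Fix $\varphi^{k}\to\varphi$ in $C([0,T];\mathbb{R}^{n})$ and set $\ell:=\liminf_{k\to\infty}I(\varphi^{k})$. If $\ell=\infty$ the conclusion is trivial, so I may assume $\ell<\infty$ and, after extracting a subsequence (still indexed by $k$), that $I(\varphi^{k})\to\ell$ and $I(\varphi^{k})\le \ell+1$ for all $k$. For each $k$ the infimum defining $I(\varphi^{k})$ is finite, so I can pick a viable pair $(\varphi^{k},\mathbf{P}^{k})\in\mathcal{V}_{(\Phi,\nu)}$ with
\begin{equation*}
\tfrac{1}{2}\int_{\mathbb{R}^{d}\times\mathcal{Y}\times[0,T]}|h|^{2}\,\mathbf{P}^{k}(dhdydt)\le I(\varphi^{k})+\tfrac{1}{k}.
\end{equation*}

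Next, I would verify the uniform second-moment bound needed for Lemma \ref{l4.6}. The $h$-moment is controlled by the preceding inequality, giving $\int|h|^{2}d\mathbf{P}^{k}\le 2(\ell+2)=:K_{1}$. For the $y$-moment, condition (iii) of Definition \ref{d2.4} forces the $(y,t)$-marginal of $\mathbf{P}^{k}$ to be $\nu(dy)\,dt$, hence $\int|y|^{2}d\mathbf{P}^{k}=T\int_{\mathcal{Y}}|y|^{2}\nu(dy)=:K_{2}<\infty$, where finiteness of $\nu(|\cdot|^{2})$ follows from the dissipativity estimate $(\ref{2.4})$ applied to the ergodic equation $(\ref{n2})$. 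Setting $K:=K_{1}+K_{2}$, Lemma \ref{l4.6} gives a further subsequence along which $(\varphi^{k},\mathbf{P}^{k})\to(\varphi^{\ast},\mathbf{P}^{\ast})$ in $C([0,T];\mathbb{R}^{n})\times\mathcal{P}(\mathbb{R}^{d}\times\mathcal{Y}\times[0,T])$; since $\varphi^{k}\to\varphi$ by hypothesis, $\varphi^{\ast}=\varphi$. By Lemma \ref{l4.7}, $(\varphi,\mathbf{P}^{\ast})\in\mathcal{V}_{(\Phi,\nu)}$.

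The last step is to pass to the liminf in the cost. The map $(h,y,t)\mapsto|h|^{2}$ is nonnegative and (lower semi)continuous on $\mathbb{R}^{d}\times\mathcal{Y}\times[0,T]$, so the Portmanteau-type inequality for weak convergence yields
\begin{equation*}
\int|h|^{2}\,\mathbf{P}^{\ast}(dhdydt)\le \liminf_{k\to\infty}\int|h|^{2}\,\mathbf{P}^{k}(dhdydt).
\end{equation*}
Combining this with the choice of $\mathbf{P}^{k}$ and the definition of $I$,
\begin{equation*}
I(\varphi)\le \tfrac{1}{2}\int|h|^{2}\,\mathbf{P}^{\ast}(dhdydt)\le \liminf_{k\to\infty}\Big(I(\varphi^{k})+\tfrac{1}{k}\Big)=\ell,
\end{equation*}
which is the desired lower semicontinuity, and simultaneously shows that each level set $\Gamma(s)$ is closed.

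The only delicate point is the weak lower semicontinuity of $\mathbf{P}\mapsto\int|h|^{2}d\mathbf{P}$: since $|h|^{2}$ is unbounded, it cannot be tested against the weak convergence directly. The standard remedy (truncating $|h|^{2}\wedge N$, which is bounded continuous, applying weak convergence, and then letting $N\uparrow\infty$ via monotone convergence) goes through here without complications, because uniform integrability of $\{|h|^{2}\mathbf{P}^{k}\}$ is not needed for the liminf direction; only the non-negativity and lower semicontinuity of $|h|^{2}$ are used. Thus I expect no genuine obstacle beyond correctly invoking Lemmas \ref{l4.6}--\ref{l4.7} and this standard approximation step.
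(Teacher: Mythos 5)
Your proposal is correct and follows essentially the same route as the paper: select near-optimal viable pairs $(\varphi^{k},\mathbf{P}^{k})$, use the $\nu\otimes dt$ marginal structure to get the uniform second-moment bound required by Lemma \ref{l4.6}, pass to a convergent subsequence, invoke Lemma \ref{l4.7} for viability of the limit, and pass to the liminf in the cost. The only difference is cosmetic: the paper justifies the final inequality by citing Fatou's lemma, whereas you spell out the correct version for weakly convergent measures (truncation of $|h|^{2}$ plus monotone convergence), which is arguably the more careful formulation of the same step.
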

\begin{proof}
Consider a sequence $\varphi^k$ with limit $\varphi$. We intend to prove
\begin{equation*}
\liminf_{k\to\infty}I(\varphi^k)\geq I(\varphi).
\end{equation*}
It is enough to focus on the case when $I(\varphi^k)$ has a finite limit, i.e., there exists a $M < \infty$ such that $\lim_{k\to\infty}I(\varphi^k)\leq M$.
We recall the definition
\begin{equation*}
	I(\varphi):=\inf_{(\varphi,\mathbf{P})\in\mathcal{V}_{(\Phi,\nu,x, \bar{X} )}}\left\{\frac{1}{2}\int_{\mathbb{R}^d\times\mathcal{Y}\times[0,T]}|h|^{2} \mathbf{P}(dhdydt)\right\}.
\end{equation*}
Then, there exists a sequence $\{\mathbf{P}^{k}\}_{k\in\mathbb{N}}$ such that $(\varphi^{k},\mathbf{P}^{k})\subset\mathcal{V}_{(\Phi,\nu,x, \bar{X} )}$
and
\begin{equation*}
\sup_{k\in\mathbb{N}}\frac{1}{2}\int_{\mathbb{R}^d\times\mathcal{Y}\times[0,T]}\big(|h|^{2}+|y|^{2}\big)\mathbf{P}^{k}(dhdydt)\leq M+1+\frac{1}{2}\int_{\mathcal{Y}}|y|^{2}\nu(dy),
\end{equation*}
and such that
\begin{equation}\label{e08}
I(\varphi^k)\geq\frac{1}{2}\int_{\mathbb{R}^d\times\mathcal{Y}\times[0,T]}|h|^2\mathbf{P}^k(dhdydt)-\frac{1}{k}.
\end{equation}
According to \cite[(4.28)]{aa} or \cite[Theorem 4.3.9]{ll}, we know that $\int_{\mathcal{Y}}|y|^{2}\nu(dy)<\infty$. Thus, there exists a constant $M'> 0$ such that
\begin{equation}\label{e09}
	\sup_{k\in\mathbb{N}}\frac{1}{2}\int_{\mathbb{R}^d\times\mathcal{Y}\times[0,T]}|h|^{2}\mathbf{P}^{k}(dhdydt)\leq M'.
\end{equation}
 In view of Lemma \ref{l4.6}, we can consider a subsequence along which $(\varphi^{k},\mathbf{P}^{k})$ converges to a limit $(\varphi,\mathbf{P})$. From Lemma \ref{l4.7}, we know that $(\varphi,\mathbf{P})$ is viable. Therefore, by (\ref{e08})-(\ref{e09}) and Fatou's lemma
\begin{align*}
\liminf_{k\to\infty} I(\varphi^{k})& \geq\liminf_{k\to\infty}\Bigg(\frac{1}{2}\int_{\mathbb{R}^d\times\mathcal{Y}\times[0,T]}|h|^{2}\mathbf{P}^{k}(dhdydt)-\frac{1}{k}\Bigg)  \\
&\geq\frac{1}{2}\int_{\mathbb{R}^d\times\mathcal{Y}\times[0,T]}|h|^{2}\mathbf{P}(dhdydt) \\
&\geq\inf_{(\varphi,\mathbf{P})\in\mathcal{V}_{(\Phi,\nu,x, \bar{X} )}}\left\{\frac{1}{2}\int_{\mathbb{R}^d\times\mathcal{Y}\times[0,T]}|h|^{2}\mathbf{P}(dhdydt)\right\} \\
&=I(\varphi),
\end{align*}
which concludes the proof of lower-semicontinuity of $I$.
\end{proof}

\subsection{Upper bound of Laplace principle }\label{sec5.4}
The proof of  the upper bound  of Laplace principle is more complicated than the lower bound, where we  need to construct the feedback controls to achieve the bound.

Recall the definition (\ref{sulv}) in Theorem \ref{t2.1}, the rate function is given by the following 
\begin{equation*}
I(\varphi)=\inf_{\mathbf{P}\in\Xi_{\varphi}}\frac{1}{2}\int_{0}^{T}\int_{\mathbb{R}^d\times\mathcal{Y}}|h|^{2}\mathbf{P}_{t}(dhdy)dt,
\end{equation*}
where the set
\begin{align*}
\left.\Xi_{\varphi}:=\left\{\begin{aligned}\mathbf{P}&:[0,T]\to\mathcal{P}(\mathbb{R}^d\times\mathcal{Y}):\\
&\text{$\exists$ a stochastic kernel $\eta$ such that }\mathbf{P}_t(A_1\times A_2)=\int_{A_2}\eta(A_1|y,t)\nu(dy),\\
&\int_0^T\int_{\mathbb{R}^d\times\mathcal{Y}}(|h|^2+|y|^2)\mathbf{P}_t(dhdy)dt<\infty,\\
&\varphi_t=x+\int_0^t\int_{\mathbb{R}^d\times\mathcal{Y}}\Phi(\varphi_s,\mathscr{L}_{\bar{X}_s},y,\nu,h)\mathbf{P}_s(dhdy)ds.\end{aligned}\right.\right\}.
\end{align*}
Moreover, we also define
\begin{equation}\label{z4.35}
\tilde{I}(\varphi):=\inf_{z\in\tilde{\Xi}_\varphi}\frac{1}{2}\int_0^T\int_\mathcal{Y}|z_t(y)|^2\nu(dy)dt,
\end{equation}
where the set
\begin{align*}
\left.\tilde{\Xi}_{\varphi}:=\left\{\begin{aligned}z:~&[0,T]\times\mathcal{Y}\to\mathbb{R}^d:\\
	&\int_0^T\int_\mathcal{Y}(|z_t(y)|^2+|y|^2)\nu(dy)dt<\infty,\\
	&\varphi_t=x+\int_0^t\int_\mathcal{Y}\Phi(\varphi_s,\mathscr{L}_{\bar{X}_s},y,\nu,z_s (y))\nu(dy)ds.\end{aligned}\right.\right\}.
\end{align*}

\begin{lemma}
$I(\varphi)=\tilde{I}(\varphi), ~\varphi\in C([0,T];\mathbb{R}^n)$.
\end{lemma}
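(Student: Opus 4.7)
The plan is to establish the two inequalities $I(\varphi)\le\tilde{I}(\varphi)$ and $I(\varphi)\ge\tilde{I}(\varphi)$ by passing between a deterministic feedback control $z_t(y)\in\mathbb{R}^d$ and a stochastic kernel $\eta(dh|y,t)$ on $\mathbb{R}^d$, exploiting the fact that $\Phi(x,\mu,y,\nu,h)=\bar b(x,\mu)+\sigma(x,\mu,y,\nu)P_1 h$ is affine in $h$.

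For $I(\varphi)\le\tilde{I}(\varphi)$, I would start from any admissible $z\in\tilde\Xi_\varphi$ and define the measure $\mathbf{P}_t(dhdy):=\delta_{z_t(y)}(dh)\,\nu(dy)$, i.e.~take the stochastic kernel $\eta(\cdot|y,t)=\delta_{z_t(y)}(\cdot)$. The decomposition required in the definition of $\Xi_\varphi$ is then immediate, the second-moment bound transfers from the one in $\tilde\Xi_\varphi$ (with the $|y|^2$ term controlled by $\int_{\mathcal Y}|y|^2\nu(dy)<\infty$), and the viability identity
\[
\varphi_t=x+\int_0^t\!\!\int_{\mathbb{R}^d\times\mathcal Y}\Phi(\varphi_s,\mathscr{L}_{\bar X_s},y,\nu,h)\mathbf{P}_s(dhdy)ds
\]
reduces to the identity defining $\tilde\Xi_\varphi$ by integrating out the Dirac mass. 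Since $\int|h|^2\mathbf{P}_t(dhdy)=\int|z_t(y)|^2\nu(dy)$, taking the infimum over $z$ yields $I(\varphi)\le\tilde{I}(\varphi)$.

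For the reverse inequality, I would fix $\mathbf{P}\in\Xi_\varphi$, write $\mathbf{P}_t(dhdy)=\eta(dh|y,t)\nu(dy)$, and set the conditional mean
\[
z_t(y):=\int_{\mathbb{R}^d}h\,\eta(dh|y,t),
\]
which is well defined $\nu(dy)dt$-a.e.~and jointly measurable by standard results on stochastic kernels. Jensen's inequality gives $|z_t(y)|^2\le\int|h|^2\eta(dh|y,t)$, so integrating against $\nu(dy)dt$ produces the key quadratic comparison
\[
\int_0^T\!\!\int_{\mathcal Y}|z_t(y)|^2\nu(dy)dt\le\int_0^T\!\!\int_{\mathbb{R}^d\times\mathcal Y}|h|^2\mathbf{P}_t(dhdy)dt.
\]
Affineness of $\Phi$ in $h$ then shows
\[
\int_{\mathbb{R}^d}\Phi(\varphi_s,\mathscr{L}_{\bar X_s},y,\nu,h)\eta(dh|y,s)=\Phi(\varphi_s,\mathscr{L}_{\bar X_s},y,\nu,z_s(y)),
\]
so the viability identity for $\mathbf{P}$ becomes the dynamical identity in the definition of $\tilde\Xi_\varphi$. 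The integrability of $|y|^2$ is inherited from $\mathbf{P}\in\Xi_\varphi$, hence $z\in\tilde\Xi_\varphi$ and $\tilde{I}(\varphi)\le\frac12\int_0^T\!\int|z_t(y)|^2\nu(dy)dt\le\frac12\int_0^T\!\int|h|^2\mathbf{P}_t(dhdy)dt$. Taking the infimum over $\mathbf{P}\in\Xi_\varphi$ yields $\tilde{I}(\varphi)\le I(\varphi)$.

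The only genuinely delicate point I expect is the joint measurability of the conditional mean $z_t(y)$ and the rigorous disintegration $\eta(dh|y,t)$; this follows from the standard existence theorem for regular conditional probabilities on Polish spaces (cf.~\cite[Appendix B.2]{f}), which is the same machinery already invoked in Definition~\ref{d2.4}. All other steps are routine: Jensen's inequality for the quadratic cost and the affine identity for $\Phi$ do the rest. In the degenerate case where one of the infima is $+\infty$ (i.e.~$\Xi_\varphi$ or $\tilde\Xi_\varphi$ is empty) the two constructions above show the other set is empty as well, so the convention that the infimum over the empty set equals $+\infty$ preserves the equality.
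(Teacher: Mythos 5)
Your proposal is correct and follows essentially the same route as the paper: the inequality $I\le\tilde I$ via the Dirac-kernel measure $\mathbf{P}_t(dhdy)=\delta_{z_t(y)}(dh)\nu(dy)$, and the reverse inequality via the conditional mean $z_t(y)=\int h\,\eta(dh|y,t)$ together with Jensen's inequality and the affineness of $\Phi$ in $h$. Your additional remarks on measurability and the empty-set convention are fine but not needed beyond what the paper records.
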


\begin{proof}
First, for any given $z \in \tilde{\Xi}_{\varphi}$, we can define $\mathbf{P} \in {\Xi}_{\varphi}$ by
\begin{equation*}
\mathbf{P}_t(dhdy):=\delta_{z_{t}(y)}(dh)\nu(dy).
\end{equation*}
Therefore, it is evident that
\begin{equation*}
I(\varphi)\leq\tilde{I}(\varphi).
\end{equation*}
Conversely, for any given $\mathbf{P} \in {\Xi}_{\varphi}$, we define
\begin{equation*}
z_t(y):=\int_{\mathbb{R}^d}h\eta(dh|y,t),
\end{equation*}
where $\eta(dh|y,t)$ is the conditional distribution of $\mathbf{P}$. Since the mapping $\Phi$ is affine with respect to the control $h$, we have $z \in \tilde{\Xi}_{\varphi}$. Then applying Jensen's inequality we can obtain that
\begin{align*}
\int_{0}^{T}\int_{\mathbb{R}^d\times\mathcal{Y}}|h|^{2}\mathbf{P}_{t}(dhdy)dt& \geq\int_{0}^{T}\int_{\mathcal{Y}}\left|\int_{\mathbb{R}^d}h\eta(dh|y,t)\right|^{2}\nu(dy)dt  \\
&=\int_0^T\int_{\mathcal{Y}}|z_t(y)|^2\nu(dy)dt.
\end{align*}
Then we can deduce that
\begin{equation*}
	I(\varphi)\geq\tilde{I}(\varphi).
\end{equation*}
Thus, the lemma follows.
\end{proof}

The next lemma provides an explicit representation of the infimization problem (\ref{z4.35}), which is crucial for proving the upper bound of Laplace principle.

\begin{lemma}
The control $h:[0,T]\times\mathcal{Y}\to\mathbb{R}^d$ defined by
\begin{align*}
&h_t (y):=P_1^*\sigma^*(\varphi_{t},\mathscr{L}_{\bar{X}_{t}},y,\nu)Q^{-1}(\varphi_{t},\mathscr{L}_{\bar{X}_t},\nu)(\dot{\varphi}_t-\bar{b}(\varphi_t,\mathscr{L}_{\bar{X}_t})),
\end{align*}
attains the infimum in (\ref{z4.35}), where
\begin{equation*}
Q(\varphi_{t},\mathscr{L}_{\bar{X}_{t}},\nu):=\int_{\mathcal{Y}} \sigma(\varphi_{t},\mathscr{L}_{\bar{X}_{t}},y,\nu)P_1P_1^*\sigma^*(\varphi_{t},\mathscr{L}_{\bar{X}_{t}},y,\nu)\nu(dy).
\end{equation*}
Furthermore, the infimization problem (\ref{z4.35}) has the explicit solution
\begin{align}\label{z4.36} \tilde{I}(\varphi)=\begin{cases}\frac{1}{2}\int_0^T|Q^{-1/2}(\varphi_{t},\mathscr{L}_{\bar{X}_t},\nu)(\dot{\varphi}_t-\bar{b}(\varphi_t,\mathscr{L}_{\bar{X}_t}))|^2dt,&\varphi(0)=x,\varphi ~\text{is absolutely continuous},\\+\infty,&\text{otherwise},\end{cases}
\end{align}
\end{lemma}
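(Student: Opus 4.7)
The plan is to reduce the infinite-dimensional variational problem (\ref{z4.35}) to a pointwise-in-$t$ quadratic minimization with a single linear constraint, and then solve that constrained minimization via Cauchy--Schwarz (equivalently, Lagrange multipliers). First I would unpack the admissible set: since $\Phi(x,\mu,y,\nu,h)=\bar{b}(x,\mu)+\sigma(x,\mu,y,\nu)P_1 h$ with $\bar{b}$ independent of $y$ and $\nu$ a probability measure, the constraint in $\tilde\Xi_\varphi$ rewrites as
$$\varphi_t = x+\int_0^t\bar{b}(\varphi_s,\mathscr{L}_{\bar{X}_s})\,ds+\int_0^t\int_\mathcal{Y}\sigma(\varphi_s,\mathscr{L}_{\bar{X}_s},y,\nu)P_1z_s(y)\,\nu(dy)\,ds,$$
so $\tilde\Xi_\varphi$ is nonempty only when $\varphi$ is absolutely continuous with $\varphi_0=x$, and admissibility reduces to the pointwise identity
$$v_t := \dot\varphi_t-\bar{b}(\varphi_t,\mathscr{L}_{\bar{X}_t}) = \int_\mathcal{Y}\sigma(\varphi_t,\mathscr{L}_{\bar{X}_t},y,\nu)P_1z_t(y)\,\nu(dy)\quad\text{for a.e.\ }t.$$
Since the cost separates across $t$, it suffices to minimize $\int_\mathcal{Y}|z(y)|^2\nu(dy)$ over $z\in L^2(\nu;\mathbb{R}^d)$ subject to $\int_\mathcal{Y}\sigma P_1 z\,d\nu=v_t$, for a.e.\ $t$.

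Next I would solve this pointwise problem via Cauchy--Schwarz in $L^2(\nu;\mathbb{R}^d)$. For every $\lambda\in\mathbb{R}^n$ the constraint gives $\langle\lambda,v_t\rangle=\int_\mathcal{Y}\langle P_1^*\sigma^*(y)\lambda,z(y)\rangle\,\nu(dy)$, whence
$$|\langle\lambda,v_t\rangle|^2\le\Bigl(\int_\mathcal{Y}|P_1^*\sigma^*\lambda|^2\,d\nu\Bigr)\int_\mathcal{Y}|z(y)|^2\,d\nu=\langle\lambda,Q\lambda\rangle\int_\mathcal{Y}|z(y)|^2\,d\nu,$$
where I used $P_1P_1^*=I_{d_1}$ so that $(\sigma P_1)(\sigma P_1)^*=\sigma\sigma^*$ and the definition of $Q$ in (\ref{z2.21}) agrees. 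Invertibility of $Q$ is supplied by the remark following Theorem~\ref{t2.1}, and taking $\lambda=Q^{-1}v_t$ yields the pointwise lower bound $\int_\mathcal{Y}|z(y)|^2\nu(dy)\ge |Q^{-1/2}v_t|^2$. Integrating in $t$ gives $\tilde{I}(\varphi)\ge\frac12\int_0^T|Q^{-1/2}v_t|^2\,dt$.

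Finally I would verify that the candidate $h_t(y)=(\sigma P_1)^*(\varphi_t,\mathscr{L}_{\bar{X}_t},y,\nu)Q^{-1}v_t$ saturates the bound and lies in $\tilde\Xi_\varphi$: for admissibility, $\int_\mathcal{Y}\sigma P_1 h_t\,d\nu=QQ^{-1}v_t=v_t$; for optimality, $\int_\mathcal{Y}|h_t|^2d\nu=\langle Q^{-1}v_t,QQ^{-1}v_t\rangle=|Q^{-1/2}v_t|^2$. This produces the explicit formula (\ref{z4.36}); when $\tilde\Xi_\varphi=\emptyset$ (e.g.\ $\varphi$ not absolutely continuous or $\varphi_0\neq x$) the $+\infty$ convention applies. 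No serious obstacle is anticipated: the argument is essentially the orthogonal projection in $L^2(\nu;\mathbb{R}^d)$ onto the $n$-dimensional subspace spanned by $\{y\mapsto P_1^*\sigma^*(y)\lambda:\lambda\in\mathbb{R}^n\}$, with the only mild technicality being measurability of $t\mapsto h_t(\cdot)$, which follows from the measurability of $\varphi_t$, $\mathscr{L}_{\bar{X}_t}$, and the joint continuity of $\sigma$.
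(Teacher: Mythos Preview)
Your proposal is correct and follows essentially the same route as the paper: reduce the constraint in $\tilde\Xi_\varphi$ to the pointwise identity $\dot\varphi_t-\bar b(\varphi_t,\mathscr{L}_{\bar X_t})=\int_\mathcal{Y}\sigma P_1 z_t(y)\,\nu(dy)$, derive the lower bound $\int_\mathcal{Y}|z_t(y)|^2\nu(dy)\ge |Q^{-1/2}v_t|^2$, and then check that the candidate $h_t(y)=(\sigma P_1)^*Q^{-1}v_t$ is admissible and saturates it. The only cosmetic difference is that the paper obtains the lower bound by invoking a matrix H\"older inequality from \cite[Lemma~5.1]{o}, whereas you unpack it as a direct Cauchy--Schwarz/Lagrange-multiplier computation in $L^2(\nu;\mathbb{R}^d)$; your version is more self-contained but otherwise identical in substance.
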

\begin{proof}
For any $z_t \in \tilde{\Xi}_{\varphi}$, it follows that
\begin{align*}
\dot{\varphi}_{t}=\int_{\mathcal{Y}}\Phi(\varphi_{t},\mathscr{L}_{\bar{X}_{t}},y,\nu,z_t(y))\nu(dy)=\bar{b}(\varphi_{t},\mathscr{L}_{\bar{X}_{t}})+\int_{\mathcal{Y}}\sigma(\varphi_{t},\mathscr{L}_{\bar{X}_{t}},y,\nu)z^1_t(y) \nu(dy),~{\varphi}_{0}=x.
\end{align*}
where $z^1_t(y) := P_1 z_t(y)$.
Then, utilizing the H{\"o}lder inequality for integrals of matrices (cf. \cite[Lemma 5.1]{o}) gives 
\begin{equation*}
\int_0^T\int_\mathcal{Y}|z_t(y)|^2\nu(dy)dt\geq\int_0^T(\dot{\varphi}_t-\bar{b}(\varphi_t,\mathscr{L}_{\bar{X}_t}))^*Q^{-1}(\varphi_{t},\mathscr{L}_{\bar{X}_t},\nu)(\dot{\varphi}_t-\bar{b}(\varphi_t,\mathscr{L}_{\bar{X}_t}))dt.
\end{equation*}
Furthermore, for any $t \in [0,T]$, if we take
\begin{equation}\label{z4.37}
h_t(y):=P_1^*\sigma^*(\varphi_{t},\mathscr{L}_{\bar{X}_{t}},y,\nu)Q^{-1}(\varphi_{t},\mathscr{L}_{\bar{X}_t},\nu)(\dot{\varphi}_t-\bar{b}(\varphi_t,\mathscr{L}_{\bar{X}_t})),
\end{equation}
then $h\in\tilde{\Xi}_\varphi $ and
\begin{equation*}
\int_0^T\int_\mathcal{Y}|h_t(y)|^2\nu(dy)dt=\int_0^T(\dot{\varphi}_t-\bar{b}(\varphi_t,\mathscr{L}_{\bar{X}_t}))^*Q^{-1}(\varphi_{t},\mathscr{L}_{\bar{X}_t},\nu)(\dot{\varphi}_t-\bar{b}(\varphi_t,\mathscr{L}_{\bar{X}_t}))dt,
\end{equation*}
which yields that (\ref{z4.36}) holds and the infimum of (\ref{z4.35}) is achieved in $h$ defined by (\ref{z4.37}).
\end{proof}

We are now in the position to prove the Laplace principle upper bound and thus complete the proof of Theorem \ref{t2.1}. Our goal is to demonstrate that for all bounded, continuous functions $\Lambda$ mapping $C([0,T]; \mathbb{R}^n)$ into $\mathbb{R}$, we have
\begin{align}\label{z4.38}
&\limsup_{\delta\to0}\Bigg(-{\delta}\log\mathbb{E}\left[\exp\left\{-\frac{1}{\delta}\Lambda(X^\delta)\right\}\right]\Bigg)
\nonumber\\
&\leq\inf_{\varphi\in C([0,T];\mathbb{R}^n)}\left[I(\varphi)+\Lambda(\varphi)\right]
\nonumber\\
&=\inf_{\varphi\in C([0,T];\mathbb{R}^n)}\left[\tilde{I}(\varphi)+\Lambda(\varphi)\right].
\end{align}
Notice that for any $\eta > 0$, there exists $\psi \in C([0,T];\mathbb{R}^n)$ with $\psi_0 = x$ such that
\begin{equation}\label{z4.39}
\tilde{I}(\psi)+\Lambda(\psi)\leq\inf_{\varphi\in C([0,T];\mathbb{R}^n)}\left[\tilde{I}(\varphi)+\Lambda(\varphi)\right]+\eta<\infty,
\end{equation}
and for every $z_t \in \tilde{\Xi}_{\psi}$,
\begin{equation}\label{z4.40}
\psi_t=x+\int_0^t\int_\mathcal{Y}\Phi(\psi_s,\mathscr{L}_{\bar{X}_s},y,\nu,z_s(y))\nu(dy)ds.
\end{equation}
Note that $\Lambda$ is bounded, it implies $\tilde{I}(\psi) < \infty$, and therefore, $\psi$ is absolutely continuous by the definition of $\tilde{I}$. For this specific function $\psi$, we define $\bar{h}_t(y)$ given by
\begin{equation*}
\bar{h}_t (y):=P_1^*\sigma^*(\psi_{t},\mathscr{L}_{\bar{X}_{t}},y,\nu)Q^{-1}(\dot{\psi}_t-\bar{b}(\psi_t,\mathscr{L}_{\bar{X}_t})),
\end{equation*}
then we have $\bar{h}.(y)\in L^{2}([0,T];\mathbb{R}^d)$ uniformly in $y \in \mathcal{Y} $. From a standard mollification argument, we can, without loss of generality, assume that
\begin{align}\label{z4.42}
	\bar{h}\text{~is Lipschitz continuous in}~t\in[0,T].
\end{align}
Indeed, let $0\leq\chi\in C_0^{\infty}(\mathbb{R})$ with support contained in $\{r:|r|\leq1\}$ such that $\int_\mathbb{R}\chi(r)dr=1,$ and for any $k \geq1$, let $\chi_{k}(r):=k\chi(kr)$ and define
\begin{equation*}
	\bar{h}_t^k(y):=\int_\mathbb{R}\bar{h}_r(y)\chi_k(t-r)dr.
\end{equation*}
By Subsection 4.3 in \cite{ff} (more precisely the forth and seventh display in Page 4764 in \cite{ff}), due to the property of convolutions, it clear  that for any $t_1,t_2 \in  [0,T],$
\begin{equation*}
|\bar{h}_{t_1}^k(y)-\bar{h}_{t_2}^k(y)|\leq c_k|t_1-t_2|,~~y\in\mathcal{Y}
\end{equation*}
and
\begin{equation*}
\|\bar{h}_.^k(y)-\bar{h}_.(y)\|_{L^2([0,T];\mathbb{R}^d)}\to0,~k\to\infty,\text{~uniformly in~}y\in\mathcal{Y}.
\end{equation*}
Furthermore, by  $(\mathbf{A_1})$-$(\mathbf{A_2})$ we  deduce that
\begin{align}\label{z4.43}
\bar{h}\text{~is Lipschitz continuous and bounded in~}y\in\mathcal{Y}.
\end{align}
Thus, by (\ref{z4.42}) and (\ref{z4.43}), we can also conclude that the same properties hold for the function
\begin{equation*}
\phi(\cdot,\cdot):=|\bar{h}_.(\cdot)|^2:[0,T]\times\mathcal{Y}\to\mathbb{R}.
\end{equation*}

Now we define a control by feedback form
\begin{equation*}
\bar h_t^\delta:=\bar h_t(Y_t^\delta).
\end{equation*}
By employing Khasminskii's time discretization scheme, we can establish the following convergence
\begin{equation}\label{z4.44}
\lim\limits_{\delta\to0}\mathbb{E}\int_0^T\phi(t,{Y}_t^\delta)dt=\int_0^T\int_\mathcal{Y}\phi(t,y)\nu(dy)dt,
\end{equation}
whose proof can refer to \cite[Subsection 6.4 in Appendix]{bb}.
Additionally, let $\psi\in C([0,T];\mathbb{R}^n)$ be the unique solution to the control problem (\ref{z4.40}) with $\bar h_t (y)$, we can infer that
\begin{equation}\label{z4.45}
X^{\delta,\bar{h}^\delta}\to\psi\quad\text{in~}C([0,T];\mathbb{R}^n)~~~\mathbb{P}\text{-a.s.},~\text{~as~}\delta\to0,
\end{equation} 
whose proof is postponed in Section \ref{s8.1} in Appendix.

In the following, we are able to prove (\ref{z4.38}). By (\ref{z4.36}), (\ref{z4.39}), (\ref{z4.44}) and (\ref{z4.45}), we have
\begin{align*}
&\limsup_{\delta\to0}\Bigg(-{\delta}\operatorname{log}\mathbb{E}\left[\operatorname{exp}\left\{-\frac{1}{\delta}\Lambda(X^{\delta})\right\}\right]\Bigg) \nonumber\\
=&\limsup_{\delta\to0}\operatorname*{inf}_{h\in\mathcal{A}}\mathbb{E}\left[\frac{1}{2}\int_{0}^{T}|h_{s}|^{2}ds+\Lambda(X^{\delta,h})\right] \nonumber\\
\leq&\limsup_{\delta\to0}\mathbb{E}\left[\frac{1}{2}\int_{0}^{T}|\bar{h}_{s}^{\delta}|^{2}ds+\Lambda(X^{\delta,\bar{h}^{\delta}})\right] \nonumber \\
=&\mathbb{E}\left[\frac{1}{2}\int_{0}^{T}\int_{\mathcal{Y}}|\bar{h}_{s}(y)|^{2}\nu(dy)ds+\Lambda(\psi)\right] \nonumber\\
=&I(\psi)+\Lambda(\psi) \nonumber\\
\leq&\operatorname*{inf}_{\varphi\in C([0,T];\mathbb{R}^{n})}\left[I(\varphi)+\Lambda(\varphi)\right]+\eta.
\end{align*}
Since $\eta$ is arbitrary, we complete the proof of the Laplace principle upper bound. \hspace{\fill}$\Box$

\section{Appendix}\label{s8.1}

\setcounter{equation}{0}
\setcounter{definition}{0}
\noindent\textbf{Proof of (\ref{z4.45}).}
We recall that
\begin{align*}
X_t^{\delta,\bar{h}^{\delta}}=&x+\int_{0}^{t}b(X_s^{\delta,\bar{h}^{\delta}},\mathscr{L}_{X_s^{\delta}},Y_s^{\delta,\bar{h}^{\delta}},\mathscr{L}_{Y_s^{\delta}})ds+\int_{0}^{t}\sigma(X_s^{\delta,\bar{h}^{\delta}},\mathscr{L}_{X_s^{\delta}},Y_s^{\delta,\bar{h}^{\delta}},\mathscr{L}_{Y_s^{\delta}})\bar{h}_s^{1}(Y_s^\delta)ds \nonumber\\
&+\sqrt{\delta}\int_{0}^{t}\sigma(X_s^{\delta,\bar{h}^{\delta}},\mathscr{L}_{X_s^{\delta}},Y_s^{\delta,\bar{h}^{\delta}},\mathscr{L}_{Y_s^{\delta}})dW_s^1 \nonumber\\
=:&x+\sum_{i=1}^3\mathcal{O}_i^\delta(t),
\end{align*}
and
\begin{align*}
\psi_{t}&=x+\int_0^t\bar{b}(\psi_{s},\mathscr{L}_{\bar{X}_{s}})ds+\int_0^t\int_\mathcal{Y}\sigma(\psi_{s},\mathscr{L}_{\bar{X}_{s}},y,\nu)\bar{h}_s^1(y)\nu(dy)ds,
\end{align*}
where $\bar{h}_s^1(y) := P_1 \bar{h}_s(y)$. It is clear  that
\begin{align}\label{z7.9}
X_t^{\delta,\bar{h}^{\delta}} - \psi_{t} = &\mathcal{O}_1^\delta(t) - \int_0^t\bar{b}(\psi_{s},\mathscr{L}_{\bar{X}_{s}})ds + \mathcal{O}_3^\delta(t)\nonumber\\
&+\mathcal{O}_2^\delta(t) - \int_0^t\int_\mathcal{Y}\sigma(\psi_{s},\mathscr{L}_{\bar{X}_{s}},y,\nu)\bar{h}_s^1(y)\nu(dy)ds.
\end{align}
On the one hand, it is easy to see that 
\begin{align}\label{z8.0}
\mathbb{E}\Big[\sup_{t\in[0,T]}|\mathcal{O}_3^\delta(t)|\Big]\leq C_T \delta^{\frac{1}{2}}.
\end{align}
 Moreover, using the same argument as in Lemma \ref{l4.2}, we can get
\begin{align}\label{z8.1}
\mathbb{E}\bigg[\sup_{t\in[0,T]}\left|\mathcal{O}_1^\delta(t)- \int_0^t\bar{b}(\psi_{s},\mathscr{L}_{\bar{X}_{s}})ds\right|\bigg] \leq C_{M,T}\mathbb{E}\int_0^T|X_s^{\delta,\bar{h}^\delta}-\psi_s|ds+\gamma_1(\delta),
\end{align}
where $\gamma_1(\delta)$ is a function satisfying  $\gamma_1(\delta) \to 0$, as $\delta \to 0$.
On the other hand,  we have
\begin{align}\label{z8.2}
&\mathbb{E}\bigg[\sup_{t\in[0,T]}\left|\mathcal{O}_2^\delta(t)-\int_0^t\int_\mathcal{Y}\sigma(\psi_{s},\mathscr{L}_{\bar{X}_{s}},y,\nu)\bar{h}_s^1(y)\nu(dy)ds\right|\bigg]\nonumber\\
\leq& \mathbb{E}\bigg[\sup_{t\in[0,T]}\left|\mathcal{O}_2^\delta(t)-\int_{0}^{t}\sigma(X_s^{\delta,\bar{h}^{\delta}},\mathscr{L}_{X_s^{\delta}},Y_s^{\delta,\bar{h}^{\delta}},\nu)\bar{h}_s^{1}(Y_s^\delta)ds\right|\bigg]\nonumber\\
&+\mathbb{E}\bigg[\sup_{t\in[0,T]}\left|\int_{0}^{t}\sigma(X_s^{\delta,\bar{h}^{\delta}},\mathscr{L}_{X_s^{\delta}},Y_s^{\delta,\bar{h}^{\delta}},\nu)\bar{h}_s^{1}(Y_s^\delta)ds-\int_0^t\int_\mathcal{Y}\sigma(\psi_{s},\mathscr{L}_{\bar{X}_{s}},y,\nu)\bar{h}_s^1(y)\nu(dy)ds\right|\bigg]\nonumber\\
=:&\mathcal{O}_{21}^\delta + \mathcal{O}_{22}^\delta.
\end{align}
It is evident that $\sigma(x,\mu,y,\nu)$ is Lipschitz continuous and bounded w.r.t.~$(x,\mu,y,\nu)$. Since $\bar{h}(y)$ is Lipschitz continuous
and bounded in $y \in \mathcal{Y}$, we conclude that  $\sigma(x,\mu,y,\nu)\bar{h}(z)$ is also Lipschitz continuous w.r.t.~$(x,\mu, y,\nu,z)$ and $\sigma(x,\mu,y,\nu)\bar{h}(y)$ is
locally Lipschitz continuous w.r.t.~$y$. Consequently, from the  same argument as in Lemma \ref{l4.1} we can deduce 
\begin{align}\label{z8.4}
\mathcal{O}_{21}^\delta \leq\gamma_2(\delta),
\end{align}
where $\gamma_2(\delta)$ is a function satisfying  $\gamma_2(\delta) \to 0$, as $\delta \to 0$.

In addition, from the same argument as in the proof of \cite[(3.38)]{cc}, we can get
\begin{align}\label{z8.3}
\mathcal{O}_{22}^\delta \leq C_{M,T}\mathbb{E}\bigg(\int_0^T|X_s^{\delta,\bar{h}^\delta}-\psi_s|ds\bigg) + \gamma_3(\delta),
\end{align}
where $\gamma_3(\delta)$ is a function satisfying  $\gamma_3(\delta) \to 0$, as $\delta \to 0$.

Collecting (\ref{z7.9})-(\ref{z8.3}) and using Gronwall's inequality, we deduce that (\ref{z4.45}) holds. \hspace{\fill}$\Box$   

\vspace{5mm}

\noindent\textbf{Acknowledgements} The authors would like to thank the referees for their very constructive suggestions.

\vspace{3mm}




\end{document}